\newif\ifanswers
\newtheorem{theorem}{Theorem}[section]
\newtheorem{proposition}[theorem]{Proposition}
\newtheorem{lemma}[theorem]{Lemma}
\newtheorem{corollary}[theorem]{Corollary}
\theoremstyle{definition}
\newtheorem{definition}[theorem]{Definition}
\newtheorem{remark}[theorem]{Remark}
\newcommand{\vol}{\operatorname{Vol}}
\newcommand{\Z}{{\mathbb Z}}
\newcommand{\R}{{\mathbb R}}
\newcommand{\T}{{\mathbb T}}
\newcommand{\M}{{\mathcal{M}}}
\newcommand{\Ac}{\mathcal{A}}
\newcommand{\Gc}{\mathcal{G}}
\newcommand{\Xc}{\mathcal{X}}
\newcommand{\CU}{\mathcal{U}}
\newcommand{\Vc}{\mathcal{V}}
\newcommand{\Ec}{\mathcal{E}}
\newcommand{\E}{\mathbb{E}}
\title[Planck-scale mass equidistribution]{Planck-scale mass equidistribution of toral Laplace eigenfunctions}
\author[A. Granville]{Andrew Granville}
\address{AG: D\'epartement de math\'ematiques et de statistique\\
Universit\'e de Montr\'eal\\
CP 6128 succ. Centre-Ville\\
Montr\'eal, QC H3C 3J7\\
Canada;
and Department of Mathematics \\
University College London \\
Gower Street \\
London WC1E 6BT \\
England.
}
\email{{\tt and.granville@gmail.com}}
\author[I. Wigman]{Igor Wigman}
\address{IW: Department of Mathematics \\ King's College London \\ Strand \\ London WC2R 2LS \\ England, UK   }
\email{{\tt igor.wigman@kcl.ac.uk}}
\subjclass[2010]{Primary:  11H06; Secondary: 11Z05 }
\keywords{}
\begin{document}

\begin{abstract}
We study the small scale distribution of the $L^2$-mass of
eigenfunctions of the Laplacian on the the two-dimensional flat torus. Given an orthonormal
basis of eigenfunctions, Lester and Rudnick \cite{LeRu}  showed the existence of a density one subsequence
whose $L^2$-mass  equidistributes more-or-less down to the Planck scale. We give a more precise version of their result showing equidistribution holds down to a small power of log above Planck scale, and also showing that the $L^2$-mass  fails to equidistribute at a slightly smaller power of log above the Planck scale.

This article rests on a number of results about the proximity of lattice points on circles, much of it based on foundational work of Javier Cilleruelo.
\end{abstract}

\date{\today}
\dedicatory{Dedicated to the memory of Javier Cilleruelo}

\maketitle

\section{Introduction}

\subsection{Background and motivation}
Let $\M$ be a  smooth, compact, $n$-dimensional Riemann manifold, and with no loss of generality we assume that $\vol(\M) = 1$.
We are interested in the Laplace spectrum of $\M$ (also called ``energy levels"): these are the eigenvalues, $E$, of the equation
\begin{equation*}
\Delta f + Ef=0.
\end{equation*}
It is well-known that the eigenvalue spectrum $\{E_{j}\}_{j\geq 1}$ is discrete, that $E_{j}\rightarrow\infty$, and we let $\phi_{j}$ be a corresponding
orthonormal basis of eigenfunctions. Shnirelman's Theorem \cite{Sn, Ze, CdV} asserts that if $\M$ is chaotic (that is, the geodesic
flow on $\M$ is ergodic),
then there is a subsequence $\{E_{j_{k}}\}_{j\geq k}$ of $\{ E_j\}_{j\geq 1}$, of density one,  for which the $\phi_{j}$ are
$L^{2}$-equidistributed in the phase-space; in particular for every ``nice'' domain on the configuration space $\Ac\subseteq  \M$ we have
\begin{equation}
\label{eq:L2 mass A->|A|}
\frac{\int\limits_{\Ac}\phi_{j_{k}}^{2}(y)dy}{\vol(\Ac)} \rightarrow 1,
\end{equation}
where $\vol(\Ac)$ is the ($n$-dimensional) volume of $\Ac$.

Berry's widely believed conjecture ~\cite{Berry1,Berry2} goes beyond Shnirelman's Theorem, asserting that \eqref{eq:L2 mass A->|A|}
holds for any $\Ac=\Ac_{j_{k}}$ which shrinks slightly slower than the {\em Planck scale}
$E_{j_{k}}^{-1/2}$. More precisely, let $$B_{x}(r)\subseteq\M$$ be the radius $r>0$ geodesic ball centred at $x$. Then there should exist
a density $1$ subsequence $\{ \phi_{j_k}\}_{k\geq 1}$ of energy levels, such that
\eqref{eq:L2 mass A->|A|}
holds uniformly for all $x\in \M$, $r>r_{0}(E_{j_{k}})$ with $B_{x}(r)$ in place of $A$, as long as
\begin{equation}
\label{eq:r0>>E^1/2}
\lim\limits_{E\rightarrow\infty} r_{0}(E) \cdot E^{1/2} =\infty,
\end{equation}
i.e.
\begin{equation}
\label{eq:unif equidist sup r>r0}
\sup\limits_{r>r_{0}(E), x\in\M} \left| \frac{\int\limits_{B_{x}(r)}\phi_{j_{k}}^{2}(y)dy}{\vol(B_{x}(r))}  - 1 \right|  \rightarrow 0.
\end{equation}
There are only a few such results in the literature with $r$ small: \

---\ Luo and Sarnak \cite{Luo-Sarnak} showed this for $r>E^{-\alpha}$ for some small $\alpha>0$, for the modular surface, where the eigenfunctions are the eigenfunctions of all Hecke operators, and Young \cite{Young}
showed this for all eigenfunctions for  $r>E^{-1/4+o(1)}$, assuming the Generalized Riemann Hypothesis;

---\ Hezari-Rivi\`{e}re ~\cite{Hezari-Riviere} and Han ~\cite{Han1} showed the integral is the expected value up to a multiplicative constant,  for $r>(\log E)^{-\alpha}$ for some small $\alpha>0$, on negatively curved manifolds. Han ~\cite{Han2} also showed this for ``symmetric" manifolds
(i.e.~ manifolds on which the group of isometries act transitively) on which the lower bound on $r$ depends on the growth rate of the eigenspace dimensions (the ``spectral degeneracy'').

\subsection{Toral eigenfunctions}
\label{sec:stat main res}

Our starting point is the work of Lester and Rudnick \cite[Theorem $1.1$]{LeRu}, who considered the small-scale
equidistribution \eqref{eq:unif equidist sup r>r0} of Laplace eigenfunctions
on the $d$-dimensional  torus $\T^{d}=(\R /\Z)^{d}$.
For $d=2$ they proved that if $\{\phi_{j}\}$ is an orthonormal basis of $L^{2}(\T^{2})$, then there exists a {\em density one}
subsequence $\{j_{k}\}$ of the positive integers, for which \eqref{eq:unif equidist sup r>r0} holds provided that $r_{0}>E^{-1/2+o(1)}$,
which is close to the full  (optimal) ``Planck range'' \eqref{eq:r0>>E^1/2}; we prove a strong version of this result below (see
Theorem \ref{thm:unif distr BR}).

Let $$S=\{a^{2}+b^{2}:\:a,b\in\Z\}$$ be the set of all integers expressible as sum of two squares.
For $n\in S$ let $\Ec_{n}$  be the set of  lattice points lying on the circle of radius $\sqrt{n}$, namely
\begin{equation*}
\Ec_{n} = \{ \lambda\in\Z^{2}:\: \|\lambda\|^{2}=n\},
\end{equation*}
which has size
$  \#\Ec_{n}=r_{2}(n)$, the number of different ways of expressing $n$ as the sum of two squares.

The eigenvalues of the Laplacian of $\T^{2}$ are the numbers $E=4\pi^{2}n$ with\footnote{By
an abuse of notation, $n$ is commonly referred to as an ``energy level" rather than the corresponding $E=4\pi^{2}n$.} 
$n\in S$, and the corresponding space of (complex-valued) eigenfunctions is
\begin{equation}
\label{eq:f(x)=sum c*exp}
f_{n}(x) = \sum\limits_{\lambda\in \Ec_{n}} c_{\lambda} e\left(\left\langle x,\lambda\right\rangle\right)
\end{equation}
of dimension $r_2(n)$. We will further assume that the $f_{n}$ are real-valued, so that
\begin{equation}
\label{eq:clambda,c-lambda}
c_{-\lambda}=\overline{c_{\lambda}},
\end{equation}
and multiply through by a constant so that
\begin{equation}
\label{eq:sum clamsqr=1}
\|f_{n}\|_{2}^{2} = \sum\limits_{\lambda\in\Ec_{n}}\|c_{\lambda}\|^{2} = 1.
\end{equation}
Landau ~\cite[\S1.8]{Landau,Brudern} proved that
\begin{equation}
\label{eq:Landau M/sqrt(logM)}
|\{ n\in S:\: n\le N  \} | \sim \kappa_{LR}\cdot\frac{N}{\sqrt{\log{N}}}\left(1+O\left(\frac{1}{\log{N}}\right)\right),
\end{equation}
where $\kappa_{LR}:=\frac \pi 4 \kappa'=0.76422\ldots$ with
\begin{equation}
\label{eq:kappa' def}
\kappa':= \prod_{p\equiv 1 \pmod 4} (1-1/p^2)^{1/2}.
\end{equation}
Ramanujan rediscovered this, and observed that
\begin{equation}
\label{eq:N=n^o(1)}
 r_2(n)=O_\epsilon(n^{\epsilon}) \ \  \text{ for every} \ \epsilon>0 .
\end{equation}

\subsection{Planck-scale mass equidistribution for Bourgain-Rudnick sequences}

\begin{definition}
For $\delta>0$ we say that a sequence $\{n\}\subseteq S$ satisfies the Bourgain-Rudnick condition, denoted by
$BR(\delta)$, if
there exists $C>0$ such that
\begin{equation}
\label{eq:BR(delta)}
\min\limits_{\lambda\ne \lambda'} \|\lambda-\lambda'\| > C\cdot n^{1/2-\delta}.
\end{equation}
\end{definition}

Bourgain-Rudnick ~\cite[Lemma 5]{Bourgain-Rudnick}
proved that for every $C>0$,
\begin{equation}
\label{eq:BR bound for BR seq}
\begin{split}
B(N;\delta)&:=\# \left\{n\le N,\ n\in S:\: \min\limits_{\lambda\ne \lambda'} \|\lambda-\lambda'\| \leq C\cdot n^{1/2-\delta}\right\}
\ll N^{1-\delta/3}.
\end{split}
\end{equation}
(The $B(N;\delta)$ is also implicitly dependent on $C$.)
Together with Landau's estimate \eqref{eq:Landau M/sqrt(logM)}, this implies that a generic sequence $\{n\}\subseteq S$ satisfies the $BR(\delta)$ condition, for arbitrary $\delta>0$.
Theorem \ref{thm:G* asymp} below allows us to improve this bound to $$B(N;\delta) \ll N^{1-\delta} (\log N)^{1/2},$$ which is perhaps close to the
true number of exceptional $n$. For all eigenfunctions corresponding to energy levels satisfying the $BR(\delta)$ condition
we prove the following uniform equidistribution  result with a strong upper bound on the discrepancy, for  close to the full Planck range:

\begin{theorem}
\label{thm:unif distr BR}
Let $\epsilon>\delta>0$ and $0<\eta < \epsilon-\delta$. For all sufficiently large $n$ satisfying Bourgain-Rudnick's $BR(\delta)$
condition and all $f$ for which $ \|f\|=1$ we have
\begin{equation}
\label{eq:L2 disc < n^-3/2eta}
\sup\limits_{x\in\T,\, r>n^{-1/2+\epsilon}}\left| \frac{ \int\limits_{B_{x}(r)}f(y)^{2}dy}{\pi r^{2}} - 1 \right| \ll n^{-3\eta/2} .
\end{equation}
\end{theorem}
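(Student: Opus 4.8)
The plan is to expand the integral $\int_{B_x(r)} f(y)^2\,dy$ using the Fourier expansion \eqref{eq:f(x)=sum c*exp} and to compare it against its mean $\pi r^2$ (which equals $\vol(B_x(r))$ since $\|f\|=1$). Writing $f(y)^2 = \sum_{\lambda,\mu \in \Ec_n} c_\lambda \overline{c_\mu}\, e(\langle y, \lambda-\mu\rangle)$ and integrating term by term, the diagonal terms $\lambda=\mu$ contribute exactly $\sum_\lambda \|c_\lambda\|^2 \cdot \pi r^2 = \pi r^2$ by \eqref{eq:sum clamsqr=1}. So the discrepancy is controlled by the off-diagonal sum
\begin{equation*}
\int_{B_x(r)} f(y)^2\,dy - \pi r^2 = \sum_{\substack{\lambda,\mu \in \Ec_n \\ \lambda \ne \mu}} c_\lambda \overline{c_\mu}\, e(\langle x, \lambda - \mu\rangle)\, \widehat{\mathbf 1_{B_0(r)}}(\lambda-\mu),
\end{equation*}
where $\widehat{\mathbf 1_{B_0(r)}}$ is the Fourier transform of the indicator of the disc of radius $r$, a Bessel-type function. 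The key classical fact is the decay estimate $|\widehat{\mathbf 1_{B_0(r)}}(\xi)| \ll r/\|\xi\| \ll \min(r^2, r^{1/2}\|\xi\|^{-3/2})$ for $\xi \ne 0$; the precise exponent $3/2$ is what will ultimately produce the exponent $3\eta/2$ in the bound.

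Next I would bound the off-diagonal sum absolutely: since $\lambda \ne \mu$ with $\lambda,\mu \in \Ec_n$, the $BR(\delta)$ hypothesis \eqref{eq:BR(delta)} gives $\|\lambda - \mu\| \gg n^{1/2-\delta}$, so each Fourier coefficient satisfies $|\widehat{\mathbf 1_{B_0(r)}}(\lambda-\mu)| \ll r^{1/2} (n^{1/2-\delta})^{-3/2} = r^{1/2} n^{-3/4+3\delta/2}$. Then by Cauchy–Schwarz applied to the weights $c_\lambda \overline{c_\mu}$, together with \eqref{eq:sum clamsqr=1} and the Ramanujan bound \eqref{eq:N=n^o(1)} that $r_2(n) = \#\Ec_n = n^{o(1)}$, the full off-diagonal sum is
\begin{equation*}
\ll r^{1/2} n^{-3/4+3\delta/2} \cdot \sum_{\lambda \ne \mu} |c_\lambda| |c_\mu| \le r^{1/2} n^{-3/4+3\delta/2} \cdot \left(\sum_\lambda |c_\lambda|\right)^2 \ll r^{1/2} n^{-3/4+3\delta/2} \cdot r_2(n) \ll r^{1/2} n^{-3/4+3\delta/2+o(1)}.
\end{equation*}
Dividing by $\pi r^2$ gives a relative discrepancy $\ll r^{-3/2} n^{-3/4 + 3\delta/2 + o(1)}$. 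Now insert the hypothesis $r > n^{-1/2+\epsilon}$, so $r^{-3/2} < n^{3/4 - 3\epsilon/2}$, and the relative discrepancy is $\ll n^{-3\epsilon/2 + 3\delta/2 + o(1)} = n^{-3(\epsilon-\delta)/2 + o(1)}$. Since $\eta < \epsilon - \delta$, for $n$ large the $o(1)$ loss is absorbed and this is $\ll n^{-3\eta/2}$, as required; taking the supremum over $x$ and $r$ is harmless because the bound was obtained by absolute values and is monotone in $r^{-1}$.

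I expect the main obstacle to be making the Fourier-transform decay estimate both uniform in $r$ across the whole relevant range and sharp enough to yield the $3/2$ exponent: one must handle the regime where $\|\lambda-\mu\|$ is comparable to $1/r$ (so the Bessel asymptotics are transitional) and confirm that the stationary-phase bound $|\widehat{\mathbf 1_{B_0(r)}}(\xi)| \ll r^{1/2}\|\xi\|^{-3/2}$ holds with an absolute implied constant. A secondary point requiring care is that the supremum over all $r > n^{-1/2+\epsilon}$ is genuinely controlled by the single worst case $r \approx n^{-1/2+\epsilon}$ — this follows since our estimate for the absolute discrepancy grows like $r^{1/2}$ while the normalizing volume grows like $r^2$, so the relative error is decreasing in $r$ and the endpoint dominates. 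Everything else — the diagonal computation, Cauchy–Schwarz, and invoking \eqref{eq:N=n^o(1)} — is routine.
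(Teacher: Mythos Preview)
Your proposal is correct and follows essentially the same route as the paper: the paper packages the Fourier/Bessel identity as Lemma~\ref{lem:L2 mass sum Bessel} and the decay-plus-Cauchy--Schwarz step as Corollary~\ref{cor:UpperBound}, then invokes $BR(\delta)$ together with $r_2(n)=n^{o(1)}$ exactly as you do. Your worry about the transitional Bessel regime is moot, since under the hypotheses $r\|\lambda-\mu\|\gg n^{\epsilon-\delta}\to\infty$, so only the large-argument bound $J_1(t)\ll t^{-1/2}$ is ever needed.
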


Theorem \ref{thm:unif distr BR} implies Lester and Rudnick's result \cite{LeRu}
for $2$-dimensional tori (mentioned at the beginning of section \ref{sec:stat main res}), as
 \eqref{eq:BR bound for BR seq} is so much smaller than \eqref{eq:Landau M/sqrt(logM)}.

\subsection{On the number of exceptional energy levels}
Our goal is to estimate the number of exceptions to $BR(\delta)$, for given $\delta>0$.
To do this we obtain a precise estimate for
\begin{equation*}
B^*(N;\delta):=\#  \{(\lambda,\lambda'):\:  \|\lambda\|^{2}=\|\lambda'\|^{2} \le N, \, 0< \|\lambda-\lambda'\| \leq  C \|\lambda\|^{1-2\delta} \} ,
\end{equation*}
which yields a better bound than \eqref{eq:BR bound for BR seq} for the number of exceptions to $BR(\delta)$, as $B(N;\delta)\leq B^*(N;\delta)$.

\begin{theorem}
\label{thm:BR exceptions}
Fix $0<\delta<\frac 12$, and a constant $C>0$. Then
\begin{equation*}
B^*(N;\delta)=  \frac{4C}{\pi} \cdot \frac{1-2\delta}{1-\delta} \cdot N^{1-\delta}\log{N}\left(1+O\left( \frac{1}{\sqrt{\log{N}}} \right)\right) .
\end{equation*}
\end{theorem}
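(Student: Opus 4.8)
The plan is to count pairs $(\lambda,\lambda')$ of lattice points on a common circle $\|\lambda\| = \|\lambda'\| = \sqrt n \le \sqrt N$ with $0 < \|\lambda-\lambda'\| \le C\|\lambda\|^{1-2\delta}$, by reorganizing the count around the difference vector $v = \lambda - \lambda'$. For a fixed nonzero $v \in \Z^2$, the condition $\|\lambda\| = \|\lambda-v\|$ forces $\lambda$ to lie on the line $\langle \lambda, v\rangle = \tfrac12\|v\|^2$, i.e. the perpendicular bisector of $0$ and $v$; the lattice points on this line form a coset of the rank-one lattice generated by the primitive vector $w$ orthogonal to $v$, spaced $\|w\| = \|v\|/\gcd$-type distance apart (one must be careful with the parity of the coordinates of $v$, which governs whether the bisector passes through lattice points at all and with what spacing). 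Writing $\lambda = \lambda_0 + t w$ for $t \in \Z$, the radius constraint $\|\lambda\|^2 \le N$ becomes $t$ ranging over an interval of length $\asymp \sqrt N / \|v\|$ (for $\|v\| \ll \sqrt N$), and the difference constraint is just $0 < \|v\| \le C n^{1/2-\delta} \le C N^{1/2-\delta}$, with the additional coupling that $n = \|\lambda_0 + tw\|^2$ must itself satisfy $\|v\| \le C n^{1/2-\delta}$, i.e. $n \ge (\|v\|/C)^{1/(1/2-\delta)}$.

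So I would write
\begin{equation*}
B^*(N;\delta) = \sum_{0 < \|v\| \le C N^{1/2-\delta}} \#\{\lambda \in \Z^2 : \langle \lambda, v\rangle = \tfrac12\|v\|^2,\ (\|v\|/C)^{2/(1-2\delta)} \le \|\lambda\|^2 \le N\},
\end{equation*}
and evaluate the inner count as the number of integer points on a segment of a line, which is the length of the segment divided by the lattice spacing along that line, up to an error $O(1)$. The spacing is $\|v\|$ if $v$ has both coordinates even or mixed parity appropriately, and $\|v\|/\sqrt2$ (equivalently, spacing $\|v\|$ after rescaling) in the odd-odd case; sorting this out amounts to splitting $v$ by residues mod $2$ and introducing the relevant local density factor — this is where the constant $\kappa$-type arithmetic enters, though here it collapses to the clean rational $4C(1-2\delta)/(\pi(1-\delta))$ because we are counting all $n$, not $n$ restricted to squarefree parts or similar. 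The inner count is then, to leading order,
\begin{equation*}
\frac{(\text{length of arc of the bisector line inside the annulus } (\|v\|/C)^{2/(1-2\delta)} \le \|\lambda\|^2 \le N)}{(\text{lattice spacing along the line})} + O(1).
\end{equation*}
The bisector line at distance $\|v\|/2$ from the origin meets the disk of radius $\sqrt N$ in a chord of length $2\sqrt{N - \|v\|^2/4}$, so for $\|v\| \le C N^{1/2-\delta}$ (which is $o(\sqrt N)$) this is $2\sqrt N(1 + O(\|v\|^2/N))$; subtracting the tiny inner disk contributes a lower-order term. Dividing by the spacing $\asymp \|v\|$ and summing over $v$ in the disk $\|v\| \le C N^{1/2-\delta}$ via $\sum_{0 < \|v\| \le R} 1/\|v\| \sim 2\pi R$ (a standard lattice-point-with-weight estimate, itself with error $O(\log R)$ or better) produces the main term $\frac{4C}{\pi}\cdot\frac{1-2\delta}{1-\delta} N^{1-\delta}\log N$ after carefully carrying through the constraint coupling $n$ and $\|v\|$, which is precisely what generates the $\log N$ (the $\|v\|$-sum against the logarithmic range of admissible $n$ for each $v$).

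The main obstacle, and the place demanding the most care, will be the precise bookkeeping of two intertwined things: first, the parity/primitivity structure of $v$ determining the exact lattice spacing on the perpendicular bisector (and in particular ensuring the bisector contains lattice points at all — it does exactly when $\|v\|^2$ is even, i.e. $v$ not of mixed parity, which must be handled by a clean dichotomy), and second, the interaction between the radial cutoff $\|\lambda\|^2 \le N$ and the coupled cutoff $\|v\| \le C\|\lambda\|^{1-2\delta}$, since the latter makes the admissible range of $n$ depend on $v$ and is the source of both the exponent $1-\delta$ and the logarithm. Controlling the accumulated error terms — the $O(1)$ per line summed over $\asymp N^{1-2\delta}$ lines (which is $O(N^{1-2\delta}) = o(N^{1-\delta}\sqrt{\log N})$, hence harmless), plus the error in $\sum 1/\|v\|$, plus the $O(\|v\|^2/N)$ chord correction — to see they all fit inside the claimed $O(N^{1-\delta}\sqrt{\log N})$ is routine but must be done. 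I expect Cilleruelo-type input on lattice points on circles to be unnecessary for this particular theorem (it is really just a clean lattice-point count), but the hypotheses and normalizations should be kept consistent with the rest of the paper.
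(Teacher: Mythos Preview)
Your difference-vector parametrization is a legitimate route, and it is genuinely different from the paper's argument. The paper does not count along perpendicular bisectors at all: it first rewrites each close pair $(a,b)\in\Ec_n^2$ as a triple $(\alpha,\beta,u)\in\Z[i]^2\times\CU$ with $a=\alpha\beta$, $b=u\alpha\overline\beta$, proves a \emph{fixed-threshold} asymptotic (Theorem~\ref{thm:not BR seq asympt Andrew})
\[
\#\Gc(N;M)\sim \frac{16}{\pi}\sqrt{N}\,M\log M,
\]
and then recovers the varying threshold $M=Cn^{1/2-\delta}$ by sandwiching $B^*(N)-B^*((1-\epsilon)N)$ between two such fixed-$M$ counts and telescoping dyadically in $N$. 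In that argument the $\log N$ is already present in $\Gc(N;M)$ (it comes from the harmonic-type sum $\sum_{y\le M/2}\varphi(2y)/(2y^2)\sim(4/\pi^2)\log M$), and the telescoping produces the extra factor $1/(1-\delta)$.

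Your approach can be made to work, but there is a real gap in your sketch: you have misidentified where the logarithm comes from. You write that the spacing on the bisector is $\asymp\|v\|$, use $\sum_{\|v\|\le R}1/\|v\|\sim 2\pi R$, and say the $\log N$ is generated by ``the $\|v\|$-sum against the logarithmic range of admissible $n$''. That is not correct. If you take the spacing to be $\|v\|$, then both the outer-disk contribution and the inner-disk subtraction are of size $\asymp N^{1-\delta}$ with \emph{no} logarithm, and you will not recover the main term. The actual spacing on the bisector of $v=(v_1,v_2)$ is $s(v)=\|v\|/\gcd(v_1,v_2)$, and it is precisely this $\gcd$ that manufactures the log:
\[
\sum_{\substack{0<\|v\|\le M\\ \|v\|^2\text{ even}}}\frac{1}{s(v)}
=\sum_{g\ge 1}\;\sum_{\substack{v'\text{ primitive}\\ \|v'\|\le M/g}}\frac{1}{\|v'\|}\cdot[\text{parity}]
\;\sim\;\frac{8}{\pi}\,M\log M,
\]
the $\log M$ arising from the harmonic sum over $g$. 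Once you carry this through your two terms (the $\sqrt N$ chord and the $R_1=(\|v\|/C)^{1/(1-2\delta)}$ subtraction) by partial summation, the coupling produces the factor $(1-2\delta)/(1-\delta)$, not the logarithm, and you land exactly on $\frac{4C}{\pi}\cdot\frac{1-2\delta}{1-\delta}\,N^{1-\delta}\log N$. So the fix is: keep the $\gcd$ in the spacing throughout, and do not approximate $s(v)$ by $\|v\|$. Your error bookkeeping ($O(1)$ per line, chord correction, etc.) is otherwise fine and does fit inside the stated $O(N^{1-\delta}\sqrt{\log N})$.
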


The proof of Theorem \ref{thm:BR exceptions}, given in section \ref{sec:ref aut}, is a relatively straightforward application
of a more general Theorem \ref{thm:not BR seq asympt Andrew}.
Theorem \ref{thm:BR exceptions} implies in particular the upper bound
\begin{equation}
\label{eq:BN upper bound mult}
B(N;\delta)\ll N^{1-\delta}\log{N}
\end{equation}
for the number of $n\in S$ {\em not} satisfying $BR(\delta)$, but not a lower bound. This is because Theorem \ref{thm:BR exceptions}
evaluates the number of close-by pairs of lattice points rather than the corresponding radii, which a priori can result in substantial over-counting in $B(N;\delta)$ as a particular radius might correspond to many different pairs.
That, in fact, this is so, follows from the following theorem; it implies that the average number of close-by pairs corresponding to 
radii not satisfying $BR(\delta)$ is growing to infinity (cf. \eqref{eq:BN upper bound single} vs. \eqref{eq:BN upper bound mult}).
We define
\begin{equation}
\label{eq:G^*def}
\Gc^*(N;M) := \{ n\le N:\: \exists\, \lambda,\lambda'\in \Ec_n. \, 0< \|\lambda-\lambda'\| < M\}.
\end{equation}

\begin{theorem}
\label{thm:G* asymp}

Let $M=M(N)$ be a function of $N$.

\begin{enumerate}

\item Under the assumption
\begin{equation}
\label{eq:assump M<=N^1/2/logN^17}
(\log N)^3\leq M\leq N^{1/2}/(\log N)^{17},
\end{equation}
$\Gc^*(N;M)$ satisfies the asymptotic law
\[
\# \Gc^*(N;M) =  \frac{\kappa'}{2} \cdot \sqrt{N}   M  ( (2\log M)^{1/2} +O(1)),
\]
where $\kappa'$ is as in \eqref{eq:kappa' def}.

\item With no assumption on $M$ we have the upper bound
\[
\# \Gc^*(N;M) \le  \frac{\kappa'}{2}\cdot \sqrt{N}   M  ( (2\log M)^{1/2} +O(1)).
\]
\end{enumerate}

\end{theorem}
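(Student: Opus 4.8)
**Proof proposal for Theorem \ref{thm:G* asymp}.**

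The plan is to count $\#\Gc^*(N;M)$ by relating it to the pair-count $B^*(N;\delta)$-type quantity but organized dyadically in the gap size. The basic object is the set of \emph{close pairs} $(\lambda,\lambda')$ with $\|\lambda\|=\|\lambda'\|=\sqrt n \le \sqrt N$ and $0<\|\lambda-\lambda'\|<M$; each such pair certifies $n\in\Gc^*(N;M)$. The key structural fact (to be quoted from Cilleruelo-type results referenced in the paper, or proved alongside Theorem \ref{thm:not BR seq asympt Andrew}) is that for $M$ in the range \eqref{eq:assump M<=N^1/2/logN^17}, a \emph{typical} energy level $n\in\Gc^*(N;M)$ is witnessed by essentially one close pair (up to the obvious symmetries $\lambda\leftrightarrow\lambda'$, sign flips, and the rotation by $\pi/2$ that preserves $\Ec_n$), so that $\#\Gc^*(N;M)$ and the pair-count agree up to the claimed $O(1)$ inside the parenthesis. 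First I would set up the generating identity: a lattice point $\lambda=(a,b)$ with $a^2+b^2=n$ and a nonzero vector $v=\lambda'-\lambda$ with $\|v\|<M$ and $\|\lambda+v\|^2=\|\lambda\|^2$ together satisfy $2\langle\lambda,v\rangle+\|v\|^2=0$; so close pairs on the circle of radius $\sqrt n$ correspond to pairs $(\lambda,v)$ with $\lambda\perp v$ approximately, $\|v\|^2 = -2\langle \lambda,v\rangle$. Summing over $v$ with $\|v\|\le M$ and counting $\lambda$ on the line through the relevant chord, then summing the resulting count over $n\le N$, should produce a main term of the shape $\sqrt N \cdot \sum_{\|v\|<M} \frac{1}{\|v\|}\cdot(\text{arithmetic factor})$, and $\sum_{0<\|v\|<M} 1/\|v\| \asymp M$ by a lattice-point/annulus computation, which is where the factor $\kappa' \sqrt N M$ comes from after inserting the density of sums of two squares via Landau's estimate \eqref{eq:Landau M/sqrt(logM)}.

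The $(2\log M)^{1/2}$ factor is the subtle feature and deserves its own step. It should arise not from the count of vectors $v$ but from the distribution of the \emph{number} of lattice points on a circle: the energy levels contributing to $\Gc^*(N;M)$ are biased towards $n$ with many prime factors $\equiv 1 \pmod 4$, hence with large $r_2(n)$, and the close-pair condition at scale $M$ effectively forces roughly $\sim \log M / \log\log$ such prime factors on average. Concretely, I would expand $r_2(n) = 4\sum_{d\mid n}\chi_{-4}(d)$ (or use the multiplicative structure $n = 2^a \prod p_i^{e_i} \prod q_j^{2f_j}$ with $p_i\equiv 1$, $q_j\equiv 3 \pmod 4$), and track that a close pair at distance $<M$ corresponds to a divisor of $n$ (a Gaussian-integer divisor of norm $O(M^2)$ coming from the difference $\lambda-\lambda'$ factoring in $\Z[i]$), so the count over $n\le N$ of close pairs is a Dirichlet-type sum $\sum_{n\le N} (\text{number of Gaussian divisors of norm} \le M^2 \text{ with appropriate angle})$; the number of such divisors has average order $(2\log M)^{1/2}$ by a Selberg–Delange / saddle-point analysis of $\sum \mathbf{1}_{n\in S}$ twisted by the divisor count, which is precisely the mechanism behind the $(2\log M)^{1/2}$ in Theorem \ref{thm:not BR seq asympt Andrew}. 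I would invoke that general theorem to supply this asymptotic rather than re-derive it.

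For part (1), after obtaining the pair-count asymptotic, the remaining work is the over-counting control: I must show the number of $n\le N$ admitting $\ge 2$ \emph{essentially distinct} close pairs is $O(\sqrt N M)$, i.e.\ smaller than the main term by a factor $(\log M)^{1/2}$. This is where the lower bound $M\ge(\log N)^3$ and the upper bound $M\le N^{1/2}/(\log N)^{17}$ are both used: the upper bound keeps the chords short enough that two independent close pairs force $n$ to be divisible by two ``independent'' Gaussian integers of controlled norm, a codimension-one extra condition cutting the count by a power of $\log$; the lower bound on $M$ ensures the main term genuinely dominates the unavoidable $O(\sqrt N M)$ error and that the parenthetical $O(1)$ is meaningful. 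Part (2) is then immediate and unconditional: the pair-count is trivially $\ge \#\Gc^*(N;M)$ since each $n\in\Gc^*(N;M)$ contributes at least one pair, and the upper bound on the pair-count — being a sum of nonnegative terms — survives dropping the range restriction on $M$ with only the $O(1)$ slack (for tiny $M$ the bound is vacuous or trivial, and for large $M$ one still has the termwise upper estimate). The main obstacle I anticipate is precisely the over-counting estimate in part (1): proving that multiple close pairs are rare enough requires the full strength of the Cilleruelo-style geometry of lattice points on circles — quantifying how a second chord of length $<M$ on the same circle imposes an extra arithmetic constraint on $n$ — and getting the saving to be a clean power of $\log N$ rather than merely $o(1)$ is the delicate point that forces the $(\log N)^{17}$ in \eqref{eq:assump M<=N^1/2/logN^17}.
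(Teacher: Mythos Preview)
Your outline has a genuine gap at its central step. You propose to count close pairs $(\lambda,\lambda')$ modulo the ``obvious symmetries'' (swap, sign flips, rotation by $\pi/2$) and then argue that a typical $n\in\Gc^*(N;M)$ carries essentially one such pair. Both halves of this are wrong. The raw pair count is Theorem~\ref{thm:not BR seq asympt Andrew}, and it gives
\[
\#\Gc(N;M)\sim 4\,I(c)\,\sqrt{N}\,M\,\log M,
\]
with $\log M$, \emph{not} $(2\log M)^{1/2}$; you misattribute the square root to that theorem. Dividing, the average number of close pairs per $n\in\Gc^*(N;M)$ is $\asymp(\log M)^{1/2}\to\infty$, so ``one pair per $n$ up to an $8$-element symmetry group'' is simply false, and the over-counting step you describe cannot close. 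The same issue kills your argument for part~(2): the inequality $\#\Gc^*(N;M)\le\#\Gc(N;M)$ only yields an upper bound of order $\sqrt{N}M\log M$, which is weaker than the claimed $\sqrt{N}M(\log M)^{1/2}$.

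The paper's proof repairs exactly this point by quotienting by a much larger equivalence. Writing $\lambda=\alpha\beta$, $\lambda'=u\alpha\overline\beta$ with $\alpha=\gcd(\lambda,\lambda')$ in $\Z[i]$ and $(\beta,\overline\beta)=1$, two close pairs are declared equivalent whenever the $\beta$'s agree up to a unit and the $\alpha$'s merely have the same \emph{norm}; the resulting classes $A^*(\alpha,\beta,u)$ can have size $\asymp r_2(|\alpha|^2)$, far beyond the obvious $8$. Theorem~\ref{NumAutoClasses}(i) computes $\sum_{n\le N}|\mathcal A(n,M)|$: because one now sums over $a=|\alpha|^2\in S$ rather than over $\alpha\in\Z[i]$, Landau's density $S(t)\sim\kappa_{LR}\,t/\sqrt{\log t}$ replaces the lattice count $\pi t$ in the partial-summation, and this is precisely what converts the $\log M$ of the pair count into $(2\log M)^{1/2}$ with the constant $\kappa'$. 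Theorem~\ref{NumAutoClasses}(ii) then shows that $\sum_n\binom{|\mathcal A(n,M)|}{2}\ll N^{1/3}M^{4/3}(\log N)^6+N^{1/2}(\log N)^3$, which under \eqref{eq:assump M<=N^1/2/logN^17} is $O(\sqrt{N}M)$; together with $\mathds{1}_{\ge1}(m)=m+O\bigl(\binom{m}{2}\bigr)$ this gives part~(1), and $\mathds{1}_{\ge1}(m)\le m$ gives part~(2). Your instinct that the second-moment/over-counting bound is the delicate step is correct, but it must be carried out at the level of these larger automorphism classes, not raw pairs.
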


The proof of Theorem \ref{thm:G* asymp}, given in section \ref{sec:NumAutoClasses},
is a straightforward application of the more general Theorem \ref{NumAutoClasses} below.
It also yields the aforementioned upper bound
\begin{equation}
\label{eq:BN upper bound single}
B(N;\delta)\ll  N^{1-\delta}(\log{N})^{1/2}
\end{equation}
for the number of $n\in S$ {\em not} satisfying $BR(\delta)$,
stronger than \eqref{eq:BN upper bound mult} above.
Comparing the second part of Theorem \ref{thm:G* asymp} to
\eqref{eq:Landau M/sqrt(logM)} we see that if $$\psi(n)=o(n^{1/2}/(\log n)),$$ then, for almost all $n$,
we have $ \|\lambda-\lambda'\| >\psi(n)$ whenever $$\|\lambda\|^{2}=\|\lambda'\|^{2}=n$$ with $\lambda\ne \lambda'$.
(Therefore $BR\left((1+\epsilon)\frac{\log\log n}{\log n}\right)$ holds for almost all $n$.)

\subsection{Planck-scale equidistribution for flat functions, valid for arbitrary energies}

Without ruling out the possible existence of {\em close-by pairs}
of lattice points we will not be able to prove a uniform result for all energy levels,
though we do get fairly precise results in terms of
\[
 \min_{  \lambda\ne \lambda' \in \Ec_n}  \|\lambda-\lambda'\| .
 \]
In Corollary \ref{cor:UpperBound} we show that Berry's conjecture is generically true for all
\[
r \geq   {r_2(n)^{2/3}(\log n)^\epsilon} /  { \min_{\lambda\ne \lambda'}  \|\lambda-\lambda'\| } ,
\]
while in Proposition \ref{prop:not equidist ce} we show that Berry's conjecture is generically false for some
\[ r \gg 1/{ \min_{\lambda\ne \lambda'}  \|\lambda-\lambda'\| } .\]
There is not much difference in these two bounds as  $r_2(n)$ is bounded by a small power of $\log n$, for almost all $n$.

We might instead ask for the typical error term, when considering the ball centre $x\in\T$ as random, uniformly distributed on the torus. By evaluating the variance
of the corresponding variable we will be able to infer that the $L^2$ mass is equidistributed for {\em most} $x$
(see Corollaries \ref{cor: Locally flat} and \ref{cor: Flat result} below).

For $f=f_{n}$ of the form \eqref{eq:f(x)=sum c*exp} and $r>0$ we define
$$X=X_{f,r}=X_{f,r;x} = \int\limits_{B_{x}(r)}f(y)^{2}dy,$$ thinking of $X$ as a {\em random variable}
with the ball centre $x\in\T$ drawn at random, {\em uniformly}, on the torus.
The expectation
\begin{equation*}
\E[X] = \int\limits_{\T}X_{f,r;x}dx,
\end{equation*}
of the $L^{2}$ mass is simply the area of $B_{x}(r)$, as $\|f\|=1$. Therefore:

\begin{lemma}
For every $r>0$ we have
\begin{equation*}
\E[X] =\pi r^{2}.
\end{equation*}
\end{lemma}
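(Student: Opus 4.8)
The plan is to compute $\E[X]$ directly by interchanging the order of integration. Writing $X_{f,r;x} = \int_{B_x(r)} f(y)^2\,dy = \int_{\T} \mathds{1}_{B_x(r)}(y) f(y)^2\,dy$, we have
\[
\E[X] = \int_{\T} X_{f,r;x}\,dx = \int_{\T}\int_{\T} \mathds{1}_{B_x(r)}(y)\, f(y)^2\,dy\,dx.
\]
Since the integrand is nonnegative and measurable on $\T\times\T$, Tonelli's theorem lets us swap the two integrals, giving
\[
\E[X] = \int_{\T} f(y)^2 \left( \int_{\T} \mathds{1}_{B_x(r)}(y)\,dx \right) dy.
\]

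Next I would evaluate the inner integral. The key observation is that $y\in B_x(r)$ if and only if $x\in B_y(r)$, by symmetry of the (flat, translation-invariant) distance on the torus. Hence $\int_{\T} \mathds{1}_{B_x(r)}(y)\,dx = \vol(B_y(r))$, which by translation invariance of the metric does not depend on $y$; call its common value $\vol(B(r))$. Strictly speaking one should note that $B_x(r)$ here is the geodesic ball on $\T = \R^2/\Z^2$, and that for the quantities in the theorem to be meaningful at all (and for $B_x(r)$ to be an honest Euclidean disc of area $\pi r^2$) one works in the regime $r < 1/2$; in that range $\vol(B(r)) = \pi r^2$. Substituting back,
\[
\E[X] = \vol(B(r)) \int_{\T} f(y)^2\,dy = \pi r^2 \,\|f\|_2^2 = \pi r^2,
\]
using the normalization $\|f\| = 1$ from \eqref{eq:sum clamsqr=1}.

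There is essentially no obstacle here: the only points requiring a word of care are the justification of Fubini/Tonelli (immediate, as $f$ is a trigonometric polynomial, hence bounded, and the domain has finite measure) and the observation that $\vol(B_x(r))$ is independent of $x$ (translation invariance of Lebesgue measure and of the flat metric). One could alternatively phrase the whole computation as a convolution: $\E[X] = (f^2 * \mathds{1}_{B(r)})(0)$ evaluated via $\widehat{f^2}(0)\cdot \widehat{\mathds{1}_{B(r)}}(0) = 1\cdot \pi r^2$, but the Tonelli argument is the cleanest and needs no Fourier analysis. I would present the proof in two short lines along the lines above.
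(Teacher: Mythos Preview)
Your proof is correct and is essentially a careful expansion of the paper's one-line justification (``the expectation \ldots\ is simply the area of $B_{x}(r)$, as $\|f\|=1$''); the paper does not give a more detailed argument than that. The Tonelli swap and the symmetry $y\in B_x(r)\iff x\in B_y(r)$ are exactly the content behind that sentence, so there is nothing to add or change.
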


The corresponding variance is defined as
\begin{equation}
\label{eq:Var(X) def}
\Vc(X) =\int\limits_{\T}X_{f,r;x}^{2}dx-\E[X]^{2} =  \int\limits_{\T}\left(\int\limits_{B_{x}(r)}f(y)^{2}dy-\pi r^{2}\right)^{2}dx.
\end{equation}
The following result implies that the $L^{2}$-mass of any ``flat" $f$
is equidistributed on ``most" of the balls, i.e. $$\Vc(X)=o(r^{4}) = o(\E[X]^{2})$$
(see Corollaries \ref{cor: Locally flat} and \ref{cor: Flat result}).

\begin{theorem}
\label{thm:gen n var estimate}
For every $n\in S$, $f=f_{n}$ a function of the form \eqref{eq:f(x)=sum c*exp}, satisfying \eqref{eq:sum clamsqr=1},
and any small $\xi=\xi(n) > 0$, we have the following bound on the variance \eqref{eq:Var(X) def}
\begin{equation}
\label{eq:Var<< r^4}
\Vc(X) \ll \left( \sum\limits_{0 < \| \lambda-\lambda'\| < 1/(\xi r)} |c_{\lambda}c_{\lambda'}|^{2} + \xi^{3}  \right) \cdot r^{4},
\end{equation}
where the constant in the `$\ll$'-notation is absolute.
\end{theorem}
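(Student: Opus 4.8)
\emph{Proof strategy.}
The plan is to expand everything into Fourier series on $\T^{2}$ in the variable $x$ and apply Parseval. Write
\[
f(y)^{2}=\sum_{\lambda,\mu\in\Ec_{n}}c_{\lambda}c_{\mu}\,e(\langle y,\lambda+\mu\rangle),
\]
substitute $y=x+z$ in the defining integral, and set $K_{r}(\xi):=\int_{\|z\|\le r}e(\langle z,\xi\rangle)\,dz$, the Fourier transform of the indicator of $B_{0}(r)$. Then
\[
X_{f,r;x}=\sum_{\nu\in\Z^{2}}\Big(\sum_{\substack{\lambda,\mu\in\Ec_{n}\\ \lambda+\mu=\nu}}c_{\lambda}c_{\mu}\Big)K_{r}(\nu)\,e(\langle x,\nu\rangle).
\]
Since $K_{r}(0)=\pi r^{2}$ and $\sum_{\lambda}c_{\lambda}c_{-\lambda}=\sum_{\lambda}|c_{\lambda}|^{2}=1$ by \eqref{eq:clambda,c-lambda} and \eqref{eq:sum clamsqr=1}, the $\nu=0$ term equals $\E[X]=\pi r^{2}$, so Parseval yields
\[
\Vc(X)=\sum_{\nu\neq 0}|K_{r}(\nu)|^{2}\Big|\sum_{\substack{\lambda,\mu\in\Ec_{n}\\ \lambda+\mu=\nu}}c_{\lambda}c_{\mu}\Big|^{2}.
\]
Reindexing $\mu\mapsto-\mu'$ and using $c_{-\mu'}=\overline{c_{\mu'}}$ together with the symmetry $\Ec_{n}=-\Ec_{n}$, the inner sum becomes $\sum_{\lambda-\mu'=\nu}c_{\lambda}\overline{c_{\mu'}}$ over $\lambda,\mu'\in\Ec_{n}$, so that $\|\nu\|=\|\lambda-\mu'\|$ measures the separation of a pair of lattice points on the circle of radius $\sqrt{n}$.

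Two elementary inputs then control the sum. First, $|K_{r}(\nu)|\le\pi r^{2}$ trivially, while for $\nu\neq 0$ radial symmetry gives $K_{r}(\nu)=\tfrac{r}{\|\nu\|}J_{1}(2\pi r\|\nu\|)$, and the uniform bound $|J_{1}(t)|\ll t^{-1/2}$ yields $|K_{r}(\nu)|^{2}\ll r\|\nu\|^{-3}$. Second, for any fixed $\nu\neq 0$ there are at most two pairs $(\lambda,\mu')\in\Ec_{n}^{2}$ with $\lambda-\mu'=\nu$, since such a $\mu'$ must lie on the intersection of the two distinct circles of radius $\sqrt{n}$ centred at $0$ and at $-\nu$; hence, by Cauchy--Schwarz, $\big|\sum_{\lambda-\mu'=\nu}c_{\lambda}\overline{c_{\mu'}}\big|^{2}\le 2\sum_{\lambda-\mu'=\nu}|c_{\lambda}c_{\mu'}|^{2}$.

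Now split the $\nu$-sum at the threshold $\|\nu\|=1/(\xi r)$. For $0<\|\nu\|<1/(\xi r)$ use $|K_{r}(\nu)|\le\pi r^{2}$ together with the Cauchy--Schwarz bound above and swap the order of summation: since $\|\nu\|=\|\lambda-\mu'\|$, this part contributes $\ll r^{4}\sum_{0<\|\lambda-\lambda'\|<1/(\xi r)}|c_{\lambda}c_{\lambda'}|^{2}$. For $\|\nu\|\ge 1/(\xi r)$ use instead $|K_{r}(\nu)|^{2}\ll r\|\nu\|^{-3}$ and the same Cauchy--Schwarz step; after swapping summation, bound $\|\lambda-\mu'\|^{-3}\le(\xi r)^{3}$ and use $\sum_{\lambda,\mu'\in\Ec_{n}}|c_{\lambda}c_{\mu'}|^{2}=\big(\sum_{\lambda}|c_{\lambda}|^{2}\big)^{2}=1$, so this part contributes $\ll r\cdot(\xi r)^{3}=\xi^{3}r^{4}$. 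Adding the two pieces gives \eqref{eq:Var<< r^4} with an absolute implied constant.

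The computation is essentially forced once the Fourier--Parseval setup is in place; the one genuinely useful observation is the two-circle bound $\#\{(\lambda,\mu')\in\Ec_{n}^{2}:\lambda-\mu'=\nu\}\le 2$, which tames the off-diagonal terms with no arithmetic input. The only real care needed is to choose the cutoff $1/(\xi r)$ so that the trivial bound on $K_{r}$ below it and the Bessel decay above it balance against the two terms in the claimed estimate; the regime $t=2\pi r\|\nu\|\gtrsim 1/\xi$ above the cutoff is precisely where $|J_{1}(t)|\ll t^{-1/2}$ is the relevant asymptotic, and this is the single analytic point to verify.
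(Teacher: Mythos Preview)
Your argument is correct and is essentially the same as the paper's. You expand $X_{f,r;x}$ as a Fourier series in $x$, apply Parseval, and use the two-circle observation $\#\{(\lambda,\mu')\in\Ec_n^2:\lambda-\mu'=\nu\}\le 2$; the paper instead first records the identity of Lemma~\ref{lem:L2 mass sum Bessel}, squares it, and uses the equivalent structural fact that $\lambda-\lambda'+\lambda''-\lambda'''=0$ on $\Ec_n$ (with $\lambda\ne\lambda'$, $\lambda''\ne\lambda'''$) forces $(\lambda''',\lambda'')=(\lambda,\lambda')$ or $(-\lambda',-\lambda)$, obtaining the exact formula of Lemma~\ref{lem:var sum lambda} before splitting at $1/(\xi r)$ and applying the same Bessel bounds. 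The only difference is cosmetic: your Cauchy--Schwarz step yields an inequality where the paper has an equality (Lemma~\ref{lem:var sum lambda}), but since only an upper bound is needed for the theorem this costs nothing.
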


Note that if $\Vc(X) \ll   \delta^4 r^4$ then the measure of the set
$$
\Xc=\Xc(f_{n},r;\delta) =
\left\{ x\in \T: \: \left| {\int\limits_{B_{x}(r)}f(y)^{2}dy} - {\pi r^{2}} \right|>\delta r^2 \right\}
$$
of centres $x\in\T$ violating equidistribution, is $\ll \delta^2$.
The
following corollary shows that if the ``weights'' of the coefficients $c_\lambda$ are smoothly distributed around
the circle of radius $\sqrt{n}$, then $\Vc(X)=o(r^{4})$. To formulate it we will need the notation $\eta$ as follows.
Suppose that for some $\epsilon$ we have $|c_{\lambda}|^2\leq \epsilon$ for every $\lambda\in\Ec_{n}$.
Let
$$\eta=\eta(\{c_{\lambda}\}_{\lambda\in\Ec_{n}};\epsilon)>0$$ be the maximal possible number such that for all $\alpha\in \mathbb C$ with $|\alpha|=\sqrt{n}$ we have
\begin{equation}
\label{eq:Being Flat}
\sum_{\lambda:\ \| \lambda-\alpha\| < \eta \sqrt{n}} |c_{\lambda}|^2\leq \epsilon;
\end{equation}
that such $\eta$ exists follows from that fact that \eqref{eq:Being Flat} is satisfied for all $$\eta < \frac{1}{\sqrt{2}\sqrt{n}}.$$
One expects that for ``most" functions $f_{n}$ in \eqref{eq:f(x)=sum c*exp} satisfying \eqref{eq:sum clamsqr=1}, the inequality
\eqref{eq:Being Flat} is satisfied with $\eta\gg_\epsilon 1$ sufficiently small.


\begin{corollary}
\label{cor: Locally flat}
Fix $\epsilon >0$ and assume that for each $\lambda\in\Ec_{n}$ we have $|c_{\lambda}|^2\leq \epsilon$.
Then for all
$r\geq 1 / ( \epsilon \eta \sqrt{n})$ with $\eta$ as in \eqref{eq:Being Flat}, we have
$$\Vc(X) \ll \epsilon r^4,$$ where the constant in the `$\ll$'-notation is absolute.
\end{corollary}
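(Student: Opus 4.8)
The plan is to deduce this from Theorem \ref{thm:gen n var estimate} by choosing the free parameter $\xi$ optimally in terms of $\epsilon$ and $\eta$. Recall that Theorem \ref{thm:gen n var estimate} gives, for any small $\xi>0$,
\begin{equation*}
\Vc(X) \ll \left( \sum_{0 < \|\lambda-\lambda'\| < 1/(\xi r)} |c_\lambda c_{\lambda'}|^2 + \xi^3 \right) r^4,
\end{equation*}
so the task reduces to controlling the sum over close-by pairs by $O(\epsilon)$ for an appropriate choice of $\xi$. The natural choice is to take $\xi$ so that the "window" $1/(\xi r)$ is comparable to the flatness scale $\eta\sqrt n$ appearing in \eqref{eq:Being Flat}; concretely I would set $\xi = c/(\eta\sqrt n\, r)$ for a suitable absolute constant $c$, which is legitimately small precisely when $r \geq 1/(\epsilon\eta\sqrt n)$ (then $\xi \le c\epsilon$).

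First I would estimate the diagonal-type sum. Write it as $\sum_{\lambda} |c_\lambda|^2 \big(\sum_{\lambda':\, 0<\|\lambda-\lambda'\|<1/(\xi r)} |c_{\lambda'}|^2\big)$. With the above choice of $\xi$, the inner constraint $\|\lambda-\lambda'\| < 1/(\xi r) \le \eta\sqrt n$ means $\lambda'$ lies in a ball of radius $\eta\sqrt n$ around $\lambda \in \Ec_n$; applying \eqref{eq:Being Flat} with $\alpha = \lambda$ (note $|\lambda| = \sqrt n$) bounds the inner sum by $\epsilon$. Hence
\begin{equation*}
\sum_{0 < \|\lambda-\lambda'\| < 1/(\xi r)} |c_\lambda c_{\lambda'}|^2 \le \epsilon \sum_{\lambda\in\Ec_n} |c_\lambda|^2 = \epsilon,
\end{equation*}
using the normalization \eqref{eq:sum clamsqr=1}. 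For the second term, $\xi^3 \le (c\epsilon)^3 \ll \epsilon$ since $\epsilon$ is bounded. Combining, $\Vc(X) \ll (\epsilon + \epsilon^3) r^4 \ll \epsilon r^4$ with an absolute implied constant, as claimed.

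The only genuinely delicate point is the bookkeeping around the definition of $\eta$: one must make sure the radius $1/(\xi r)$ is indeed $\le \eta\sqrt n$ (so that \eqref{eq:Being Flat} applies verbatim) while simultaneously keeping $\xi$ small enough that Theorem \ref{thm:gen n var estimate} is applicable — both are arranged by the single hypothesis $r \ge 1/(\epsilon\eta\sqrt n)$ together with a careful choice of the constant $c$, and one should also check the boundary case where $\eta$ is forced down to its minimal value $1/(\sqrt2\sqrt n)$, in which case $r$ must be $\gtrsim 1/\epsilon$ and the argument still goes through. I do not expect any real obstacle here; this is essentially an optimization of the parameter in the already-proven variance bound.
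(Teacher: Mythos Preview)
Your proposal is correct and follows essentially the same route as the paper: apply Theorem~\ref{thm:gen n var estimate}, arrange that the window $1/(\xi r)\le \eta\sqrt{n}$, and then bound the near-diagonal sum via \eqref{eq:Being Flat} and \eqref{eq:sum clamsqr=1}. The paper simply takes $\xi=\epsilon$ (so $1/(\xi r)=1/(\epsilon r)\le \eta\sqrt{n}$ directly from the hypothesis $r\ge 1/(\epsilon\eta\sqrt{n})$), which is your argument with $c=1$ and avoids the extra bookkeeping.
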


That the assumptions of Corollary \ref{cor: Locally flat} are ``usually" satisfied (i.e. that the weight of the coefficients $c_\lambda$ are smoothly distributed for ``most" $f$) is supported by the following result which shows that the lattice points on the circle of radius $\sqrt{n}$ are not overly crowded together:

\begin{theorem}
\label{thm:close pairs}
Fix $\epsilon>0$ sufficiently small. For every integer $n\in S$ with $r_{2}(n)>0$ we have
\begin{equation}
\label{eq:close<n^1/2-eps<<r2^2-eps}
\#\left\{ \alpha,\beta \in \Ec_{n}:\: |\alpha-\beta| \le n^{1/2-\epsilon}\right\} \ll_{\epsilon} r_2(n)^{2-\epsilon}.
\end{equation}
\end{theorem}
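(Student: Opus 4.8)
The plan is to count pairs $(\alpha,\beta)\in\Ec_n^2$ with $0<|\alpha-\beta|\le n^{1/2-\epsilon}$ by parametrizing such a pair through the difference vector $v=\alpha-\beta$ and the sum vector $s=\alpha+\beta$. Since $\|\alpha\|^2=\|\beta\|^2=n$, the vectors $v$ and $s$ are orthogonal, and $\|s\|^2+\|v\|^2=4n$, so $s$ lies on the circle of radius $\sqrt{4n-\|v\|^2}$. Moreover $\alpha=(s+v)/2$, $\beta=(s-v)/2$, so fixing $v$ we must have $s\equiv v\pmod 2$ coordinatewise and $s\perp v$; in a fixed residue class mod $2$ the point $s$ is determined by a single integer coordinate along the line $\Z v^{\perp}$, and $\|s\|^2=4n-\|v\|^2$ pins down that coordinate up to sign. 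Hence each admissible $v$ contributes $O(1)$ pairs, and the count is bounded by the number of lattice vectors $v$ with $0<\|v\|\le n^{1/2-\epsilon}$ such that $4n-\|v\|^2$ is a sum of two squares with the correct congruence obstructions — in particular such that $4n-\|v\|^2>0$ and $r_2(4n-\|v\|^2)>0$.

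The main point is then to show that the number of $v$ with $0<\|v\|\le n^{1/2-\epsilon}$ for which $m_v:=4n-\|v\|^2$ is representable as a sum of two squares is $O_\epsilon(r_2(n)^{2-\epsilon})$. First, the total number of lattice vectors in a disc of radius $n^{1/2-\epsilon}$ is $O(n^{1-2\epsilon})$, so it suffices to know that $r_2(n)\gg n^{(1-2\epsilon)/(2-\epsilon)+o(1)}$ — but of course $r_2(n)$ can be as small as $O(1)$, so this crude bound is nowhere near enough and is exactly where the real work lies. Instead I would exploit the multiplicative structure: writing $n=2^a\prod p_i^{e_i}\prod q_j^{f_j}$ with $p_i\equiv1\ (4)$ and $q_j\equiv3\ (4)$ (all $f_j$ even), we have $r_2(n)=4\prod(e_i+1)$, and the key is that for $m_v$ to be a sum of two squares each prime $q\equiv3\pmod4$ must divide $m_v$ to an even power. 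The constraint $\|v\|^2\equiv 4n\equiv 0\pmod{q^{f_j}}$ for the "hard" primes $q_j$ forces $v$ to lie in a sublattice of index $\asymp\prod q_j^{f_j}$ (using that $q_j\equiv 3\ (4)$ means $q_j$ is inert, so $q_j^{f_j/2}\mid\!\mid v$ in $\Z[i]$ roughly), cutting the count of available $v$ down by that factor; and the surviving $v$ must additionally make the $p_i$-part and the prime-$2$ part of $m_v$ lie in the right classes. Carrying out this sieve, together with the divisor-type bound $r_2(m)\ll_\epsilon m^\epsilon$ from \eqref{eq:N=n^o(1)} applied to control how many times a given value $m_v$ arises, should yield a bound of the shape $O_\epsilon(n^{1-2\epsilon}/Q^{1-o(1)})$ where $Q=\prod q_j^{f_j}$, and one checks that $n^{1-2\epsilon}/Q \ll r_2(n)^{2-\epsilon}$ using $r_2(n)\gg n^{1+o(1)}/Q$ together with a little case analysis on whether $n$ is rich or poor in primes $\equiv1\pmod4$.

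The hard part will be making the sieve over $v$ simultaneously efficient in the conductor $Q$ of the bad primes and uniform enough to beat $r_2(n)^{2-\epsilon}$ in the regime where $n$ has few prime factors $\equiv1\ (4)$ (so $r_2(n)$ is small but $n$ is large): there one cannot afford to lose more than a fixed power of $\log$, and one must genuinely use that $4n-\|v\|^2$ ranges over an interval and that its sum-of-two-squares condition is a positive-proportion event only after removing the forced $Q$-divisibility. I expect to treat the contribution of $v$ with $\|v\|$ in dyadic ranges $[T/2,T]$, $T\le n^{1/2-\epsilon}$, separately, bounding each by (number of $v\in$ sublattice of index $Q$ in the disc of radius $T$) $\times$ (max multiplicity $\ll_\epsilon n^\epsilon$), summing over $T$, and finally invoking Theorem \ref{thm:close pairs}'s target via the inequality $r_2(n)\ge 4$ and the elementary bound $r_2(n)^{2}\gg n^{2+o(1)}/Q^2$ is false in general, so the actual reconciliation must instead compare $n^{1-2\epsilon}Q^{-1}$ directly to $r_2(n)^{2-\epsilon}$ using Ramanujan's bound \eqref{eq:N=n^o(1)} in the form $r_2(n)=n^{o(1)}$ only as an upper bound and a lower bound $r_2(n)\ge 4$; if this elementary reconciliation is too lossy, one falls back on the stronger structural results on lattice points on circles attributed to Cilleruelo that the paper advertises. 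I would first attempt the purely elementary route and only escalate if the exponent $2-\epsilon$ is not reached.
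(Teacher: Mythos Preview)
Your reduction to counting difference vectors $v=\alpha-\beta$ is correct: each admissible $v$ gives $O(1)$ pairs, so the problem becomes bounding the number of $v$ with $0<\|v\|\le n^{1/2-\epsilon}$ that occur as a difference of two points of $\Ec_n$. The gap is in how you propose to count these $v$.

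Your sieve is based on the observation that primes $q\equiv 3\pmod 4$ dividing $n$ force $v$ into a sublattice of index $Q=\prod q_j^{f_j}$. But this constraint is vacuous in the only case that matters. Without loss of generality (as the paper does) one may assume $n=\prod p_i^{n_i}$ with every $p_i\equiv 1\pmod 4$; then $Q=1$, your sieve saves nothing, and your bound on admissible $v$ degenerates to the total lattice-point count $\asymp n^{1-2\epsilon}$. Since $r_2(n)^{2-\epsilon}=n^{o(1)}$, this misses the target by a full power of $n$. Concretely, for $n=5^k$ you need a bound $\ll k^{2-\epsilon}$, whereas your method yields $\ll 5^{k(1-2\epsilon)}$. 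The rigidity comes not from congruence obstructions mod inert primes but from the fact that $\|v'\|^2$ (with $v'$ the primitive part of $v$) must divide $4n$, which is a divisor condition, not a sublattice condition; your proposal never exploits this.

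The paper's argument is completely different and works directly with the multiplicative structure of $\Ec_n$. Each $\lambda\in\Ec_n$ factors as $u\prod_i P_i^{e_i}\overline{P_i}^{\,n_i-e_i}$. Choosing a divisor $m=\prod_{i\le\ell}p_i^{n_i}$ with $m\approx n^{2\epsilon}$, a pigeonhole on the exponents $(e_{\ell+1},\dots,e_k,u)$ shows that if more than $4\prod_{i>\ell}(n_i+1)=4\tau(n/m)$ points of $\Ec_n$ lie in an arc of width $<\sqrt{n/2m}$, two of them share the same tail $\gamma$, whence $|\lambda-\lambda'|\ge\sqrt{2}|\gamma|=\sqrt{2n/m}$, a contradiction. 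Summing over the $r_2(n)$ choices of $\alpha$ then gives $r_2(n)\cdot 4\tau(n/m)$, and the ordering of the primes by $\frac{\log(n_i+1)}{n_i\log p_i}$ guarantees $\tau(n/m)\le\tau(n)^{1-\epsilon}\ll r_2(n)^{1-\epsilon}$. A separate case analysis handles the situation where a single prime power $p_{\ell+1}^{n_{\ell+1}}$ dominates. None of this is visible from the difference-vector parametrization you set up.
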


One can show that if $$\epsilon_n:=\frac{ \log\log r_2(n)}{\log r_2(n)},$$ the bound is
\begin{equation}
\label{eq:close pairs eps=loglogr2/logr2}
\#\left\{ \alpha,\beta \in \Ec_{n}:\: |\alpha-\beta| \le n^{1/2-\epsilon_{n}}\right\} \ll \frac{r_2(n)^{2}}{\log r_2(n)}.
\end{equation}
The proof yields that the upper bound in Theorem \ref{thm:close pairs} may be improved to
\begin{equation*}
\#\left\{ \alpha,\beta \in \Ec_{n}:\: |\alpha-\beta| \le n^{1/2-\epsilon}\right\} \ll_{\epsilon} r_2(n)^{2-\tau \epsilon}
\end{equation*}
for any fixed $\tau<4$.

Theorem \ref{thm:close pairs} suggests that for all ``reasonable" choice of coefficients $c_{\lambda}$ the r.h.s. of \eqref{eq:Var<< r^4} is $o(r^{4})$. Here we propose a possible notion of ``reasonable''.

\begin{definition}[Flat and ultraflat functions]
\hfill \break

\begin{enumerate}
\item
Let $\{f_{n}\}_{n\in S}$ be a sequence of functions as in \eqref{eq:f(x)=sum c*exp}, and
\begin{equation}
\label{eq:epsn def}
\epsilon_n:=\frac{ \log\log r_2(n)}{\log r_2(n)}.
\end{equation}
We say that $\{f_{n}\}$ is {\em flat} if
\begin{equation}
\label{eq:  Flat f }
\max_{\substack{\alpha\in \mathbb C \\ |\alpha|=\sqrt{n}}} \ \ \
\sum_{\lambda\in \Ec_n:\ \| \lambda-\alpha\| < n^{1/2-\epsilon_n}} |c_{\lambda}|^2 = o_{n\to\infty}(1) .
\end{equation}

\item
Let $f=f_{n}$ be a function as in \eqref{eq:f(x)=sum c*exp}. For $\epsilon>0$ we say that $f$ is $\epsilon$-{\em ultraflat} if
for every $\lambda\in \Ec_n$,
\begin{equation}
\label{eq:clam ultraflat}
|c_\lambda|^2\leq \frac{1}{r_2(n)^{1-\epsilon}}.
\end{equation}

\end{enumerate}

\end{definition}

\begin{corollary}
\label{cor: Flat result}
\begin{enumerate}

\item
For all $\{f_{n}\}_{n\in S}$ flat with $r\gg 1/n^{1/2-2\epsilon_n}$,
where $\epsilon_{n}$ is given by \eqref{eq:epsn def}, we have $$\Vc(X)=o( r^4).$$

\item
If $f$ is $\epsilon$-ultraflat then for all $r\gg 1/n^{1/2-4\epsilon}$ we have $$\Vc(X) \ll r_2(n)^{-\epsilon} r^4.$$

\end{enumerate}

\end{corollary}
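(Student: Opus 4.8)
The plan is to deduce both parts of Corollary~\ref{cor: Flat result} directly from Corollary~\ref{cor: Locally flat}, supplemented in part~(1) by Theorem~\ref{thm:close pairs}, by choosing the free parameter $\epsilon$ in Corollary~\ref{cor: Locally flat} appropriately in each case.

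\emph{Part (2).} Suppose $f=f_n$ is $\epsilon$-ultraflat, so $|c_\lambda|^2 \le r_2(n)^{-(1-\epsilon)}$ for all $\lambda\in\Ec_n$. Set $\epsilon' := r_2(n)^{-(1-\epsilon)}$ as the pointwise bound on $|c_\lambda|^2$, so the hypothesis $|c_\lambda|^2 \le \epsilon'$ of Corollary~\ref{cor: Locally flat} is met. I then need a lower bound for the corresponding $\eta = \eta(\{c_\lambda\};\epsilon')$ defined by \eqref{eq:Being Flat}. By the crude bound noted right after \eqref{eq:Being Flat}, a disc of radius $\eta\sqrt n$ with $\eta < 1/(\sqrt2\sqrt n)$ contains at most one lattice point, hence the sum in \eqref{eq:Being Flat} is at most $\max_\lambda |c_\lambda|^2 \le \epsilon'$; so we may take $\eta \gg 1/\sqrt n$, say $\eta = c_0/\sqrt n$ for an absolute constant $c_0$. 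Then Corollary~\ref{cor: Locally flat} applies for all $r \ge 1/(\epsilon' \eta \sqrt n) \asymp r_2(n)^{1-\epsilon}/\sqrt n \cdot \sqrt n / \sqrt n$; more carefully, $1/(\epsilon'\eta\sqrt n) \asymp r_2(n)^{1-\epsilon}/(\sqrt n \cdot c_0) \ll r_2(n)/n^{1/2}$, and since $r_2(n) \ll n^{\epsilon}$ by \eqref{eq:N=n^o(1)} (Ramanujan's observation), this is $\ll 1/n^{1/2-\epsilon} \ll 1/n^{1/2-4\epsilon}$. For such $r$ Corollary~\ref{cor: Locally flat} gives $\Vc(X) \ll \epsilon' r^4 = r_2(n)^{-(1-\epsilon)} r^4 \ll r_2(n)^{-\epsilon} r^4$ (adjusting $\epsilon$ by a constant factor if one insists on the exponent exactly $-\epsilon$, or simply noting $r_2(n)^{-(1-\epsilon)} \le r_2(n)^{-\epsilon}$ once $r_2(n)\ge 1$ and $\epsilon \le 1/2$). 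One must be slightly careful that the implied constants in ``$\asymp$'' are absolute, which they are since they come only from $\eta \asymp 1/\sqrt n$ and from \eqref{eq:N=n^o(1)}.

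\emph{Part (1).} Now let $\{f_n\}$ be flat, i.e.\ \eqref{eq: Flat f } holds with $\epsilon_n = \log\log r_2(n)/\log r_2(n)$. The strategy is to split the sum $\sum_{0<\|\lambda-\lambda'\|<1/(\xi r)}|c_\lambda c_{\lambda'}|^2$ in Theorem~\ref{thm:gen n var estimate} according to whether $\|\lambda-\lambda'\|$ is below or above $n^{1/2-\epsilon_n}$, and to balance the two resulting terms against $\xi^3$ by an appropriate choice of the parameter $\xi$. Choosing $\xi$ so that $1/(\xi r) \asymp n^{1/2-\epsilon_n}$ — which is legitimate precisely because $r \gg 1/n^{1/2-2\epsilon_n}$ forces $\xi = 1/(\xi r \cdot (\xi r)^{-1}\cdots)$; concretely $\xi \asymp 1/(r n^{1/2-\epsilon_n}) \ll n^{\epsilon_n}$, which is $o(1)$-small enough after noting $n^{\epsilon_n}$ grows only like a power of $\log$-type quantities... actually here one must instead take $\xi$ genuinely small, so let me restate: pick $\xi = n^{-\epsilon_n/10}$ (small, tending to $0$), so $\xi^3 = n^{-3\epsilon_n/10} = o(1)$, and $1/(\xi r) \ll n^{1/2-2\epsilon_n}\cdot n^{\epsilon_n/10} \ll n^{1/2-\epsilon_n}$ since $2\epsilon_n - \epsilon_n/10 > \epsilon_n$. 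With this choice the constraint $0<\|\lambda-\lambda'\|<1/(\xi r)$ forces $\|\lambda-\lambda'\| < n^{1/2-\epsilon_n}$, so the whole sum runs over such close pairs. Bounding $|c_\lambda c_{\lambda'}|^2 \le |c_\lambda|^2 |c_{\lambda'}|^2$ and using Theorem~\ref{thm:close pairs} (or rather \eqref{eq:close pairs eps=loglogr2/logr2}, which is exactly tuned to $\epsilon_n$) together with the flatness hypothesis \eqref{eq: Flat f }, one shows this sum is $o(1)$: indeed, for each fixed $\lambda$, $\sum_{\lambda': 0<\|\lambda-\lambda'\|<n^{1/2-\epsilon_n}} |c_{\lambda'}|^2 \le \max_{|\alpha|=\sqrt n}\sum_{\|\lambda'-\alpha\|<n^{1/2-\epsilon_n}}|c_{\lambda'}|^2 = o(1)$ by \eqref{eq: Flat f }, and then $\sum_\lambda |c_\lambda|^2 \cdot o(1) = o(1)$ using \eqref{eq:sum clamsqr=1}. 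Hence both terms in \eqref{eq:Var<< r^4} are $o(1)$ and $\Vc(X) = o(r^4)$.

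\emph{Main obstacle.} The routine calculations are the index-chasing with $\epsilon_n$ and the verification that the chosen $\xi$ simultaneously makes $\xi^3$ negligible and forces $1/(\xi r)$ below $n^{1/2-\epsilon_n}$; the only genuinely substantive input is the interplay between Theorem~\ref{thm:gen n var estimate} and the flatness definition, and in part~(1) one does not even need the full strength of Theorem~\ref{thm:close pairs}, since the flatness hypothesis already controls, for a fixed $\lambda$, the $c_{\lambda'}$-mass in any disc of radius $n^{1/2-\epsilon_n}$ around $\lambda$. The one place to be careful is that the exponents $2\epsilon_n$ in part~(1) and $4\epsilon$ in part~(2) are larger than the ``natural'' $\epsilon_n$ (resp.\ $\epsilon$) appearing in the flatness/ultraflatness definitions — this slack is exactly what is consumed by the choice of $\xi$ (resp.\ by the passage from the pointwise coefficient bound to $\eta\gg 1/\sqrt n$ and Ramanujan's $r_2(n)=n^{o(1)}$), so the bookkeeping must keep enough room. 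I expect no real difficulty beyond this.
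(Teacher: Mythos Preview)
Your Part~(1) is fine and is essentially the paper's argument: you apply Theorem~\ref{thm:gen n var estimate} directly with a small $\xi$ (the paper packages the same computation through Corollary~\ref{cor: Locally flat} with $\eta=n^{-\epsilon_n}$, but the content is identical), and the flatness hypothesis \eqref{eq: Flat f } handles the near-diagonal sum exactly as you describe. You are also right that Theorem~\ref{thm:close pairs} is not needed here.

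Your Part~(2), however, contains a fatal arithmetic slip. With $\epsilon'=r_2(n)^{-(1-\epsilon)}$ and the trivial $\eta=c_0/\sqrt{n}$ one has
\[
\frac{1}{\epsilon'\,\eta\,\sqrt{n}} \;=\; \frac{1}{r_2(n)^{-(1-\epsilon)}\cdot (c_0/\sqrt{n})\cdot \sqrt{n}} \;=\; \frac{r_2(n)^{1-\epsilon}}{c_0},
\]
\emph{not} $r_2(n)^{1-\epsilon}/\sqrt{n}$ as you write; the factors $\sqrt{n}$ cancel. So Corollary~\ref{cor: Locally flat} only applies for $r\gtrsim r_2(n)^{1-\epsilon}$, a number $\ge 1$, whereas the hypothesis of Part~(2) allows $r$ as small as $1/n^{1/2-4\epsilon}\to 0$. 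No appeal to $r_2(n)=n^{o(1)}$ can bridge this gap, and with the pointwise choice $\epsilon'=r_2(n)^{-(1-\epsilon)}$ the trivial $\eta\asymp 1/\sqrt{n}$ is genuinely the best you can do (a disc containing two lattice points already makes the sum exceed $\epsilon'$).

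The paper's proof of Part~(2) avoids Corollary~\ref{cor: Locally flat} altogether: it goes back to Theorem~\ref{thm:gen n var estimate} with $\xi=r_2(n)^{-\epsilon}$, uses the ultraflat bound $|c_\lambda c_{\lambda'}|^2\le r_2(n)^{-(2-2\epsilon)}$ pointwise, observes that for $r\gg 1/n^{1/2-4\epsilon}$ one has $1/(\xi r)\le n^{1/2-3\epsilon}$, and then invokes Theorem~\ref{thm:close pairs} to bound the number of pairs $\lambda\ne\lambda'$ with $|\lambda-\lambda'|<n^{1/2-3\epsilon}$ by $r_2(n)^{2-3\epsilon}$. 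This is precisely where the ``$4\epsilon$'' in the hypothesis is consumed, and Theorem~\ref{thm:close pairs} is the essential missing ingredient in your approach to Part~(2).
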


\subsection{Outline of the paper}

In section \ref{sec::unif distr BR} we give a proof to Theorem \ref{thm:unif distr BR}.
In section \ref{sec:Berry limitations}
we construct a counterpoint to Theorem \ref{thm:unif distr BR}: a sequence of eigenfunctions corresponding to a density one sequence
of energy levels, and balls with radii satisfying \eqref{eq:r0>>E^1/2} that do not possess a ``fair" share of the $L^{2}$-mass
(see Corollary \ref{cor:not equidist ce} and also Remark \ref{rem:non-equidist}). 
Section \ref{sec:var estimates} is dedicated to the proofs of Theorem \ref{thm:gen n var estimate}, and Corollaries \ref{cor: Locally flat} and \ref{cor: Flat result}. The proof of Theorem \ref{thm:close pairs} is given towards the end of section \ref{sec:close pairs proof}, 
after some considerable preparatory work.
The proofs of theorems \ref{thm:BR exceptions} and \ref{thm:G* asymp} 
will be given in sections \ref{sec:ref aut} and \ref{sec:NumAutoClasses} respectively; these are straightforward applications
of the more general theorems \ref{thm:not BR seq asympt Andrew} and \ref{NumAutoClasses} respectively.

\subsection{Acknowledgements} The authors would like to thank Mike Bennett, Valentin Blomer,
Stephen Lester, Ze\'{e}v Rudnick, Mikhail Sodin, and Peter Sarnak for a number of stimulating
and fruitful conversations and their remarks. The research leading to these results has received funding from the
European Research Council under the European Union's Seventh
Framework Programme (FP7/2007-2013), ERC grant agreement n$^{\text{o}}$ 670239  (A.G.)
and n$^{\text{o}}$ 335141 (I.W.), as well as from NSERC Canada under the CRC program (A.G.).

\section{Proof of Theorem \ref{thm:unif distr BR}}

\label{sec::unif distr BR}

The following lemma gives an exact formula for the error term and so will be useful in the   proof of Theorem \ref{thm:unif distr BR}, and beyond.

\begin{lemma}
\label{lem:L2 mass sum Bessel}
Let $f_{n}$ be given by \eqref{eq:f(x)=sum c*exp} with \eqref{eq:sum clamsqr=1} satisfied, $x\in \T$ and $r>0$. We have
the identity
\begin{equation*}
\int\limits_{B_{x}(r)}f_{n}^{2}dy - \pi r^{2}   = 2\pi r^{2}\sum\limits_{\lambda\ne \lambda'}c_{\lambda}\overline{c_{\lambda}'}
e(\langle x, \lambda-\lambda' \rangle)\frac{J_{1}(r\|\lambda-\lambda'\|)}{r\|\lambda-\lambda'\|},
\end{equation*}
where $J_{1}$ is the a Bessel function of the first kind.
\end{lemma}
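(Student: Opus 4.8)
The plan is to expand $f_n^2$ into its Fourier series, integrate term by term over the ball $B_x(r)$, and recognize the resulting integrals as normalized Bessel functions. First I would write, using \eqref{eq:f(x)=sum c*exp},
\[
f_n(y)^2 = \sum_{\lambda,\lambda'\in\Ec_n} c_\lambda \overline{c_{\lambda'}}\, e(\langle y,\lambda-\lambda'\rangle),
\]
where I have used \eqref{eq:clambda,c-lambda} so that $\overline{f_n}=f_n$; this is a finite sum since $\#\Ec_n=r_2(n)<\infty$, so all interchanges of sum and integral are trivially justified. Integrating over $B_x(r)$ and substituting $y=x+u$ gives
\[
\int_{B_x(r)} f_n(y)^2\,dy = \sum_{\lambda,\lambda'} c_\lambda \overline{c_{\lambda'}}\, e(\langle x,\lambda-\lambda'\rangle) \int_{\|u\|<r} e(\langle u,\lambda-\lambda'\rangle)\,du.
\]
Here the torus ball $B_x(r)$ can be identified with a Euclidean disk of radius $r$ once $r$ is small, but in fact no smallness is needed for the statement as written since the exponentials are $\Z^2$-periodic and the diagonal term is handled separately; I would simply take $B_x(r)$ to be the Euclidean disk, matching the convention used elsewhere in the paper.

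Next I would separate the diagonal terms $\lambda=\lambda'$, which contribute $\sum_\lambda |c_\lambda|^2 \cdot \pi r^2 = \pi r^2$ by \eqref{eq:sum clamsqr=1}; subtracting this accounts for the $-\pi r^2$ on the left-hand side. For the off-diagonal terms I need the classical evaluation of the Fourier transform of the indicator of a disk: for any nonzero vector $v\in\R^2$,
\[
\int_{\|u\|<r} e(\langle u,v\rangle)\,du = \int_{\|u\|<r} e^{2\pi i \langle u,v\rangle}\,du = \frac{r}{\|v\|}\, J_1\!\left(2\pi r\|v\|\right),
\]
which follows by passing to polar coordinates and using the integral representation $\int_0^{2\pi} e^{i\rho\|v\|\cos\theta}\,d\theta = 2\pi J_0(\rho\|v\|)$ together with $\frac{d}{dt}\big(t J_1(t)\big) = t J_0(t)$. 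Applying this with $v=\lambda-\lambda'$ and writing $J_1(r\|\lambda-\lambda'\|)/(r\|\lambda-\lambda'\|)$ for the normalized form (absorbing the $2\pi$ into the argument, consistent with the statement's normalization), the off-diagonal sum becomes exactly
\[
2\pi r^2 \sum_{\lambda\ne\lambda'} c_\lambda \overline{c_{\lambda'}}\, e(\langle x,\lambda-\lambda'\rangle)\, \frac{J_1(r\|\lambda-\lambda'\|)}{r\|\lambda-\lambda'\|},
\]
which is the claimed identity.

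The only genuinely non-routine point is getting the Bessel normalization and the factor of $2\pi$ to match the exact form stated, so I would be careful about the convention $e(t)=e^{2\pi i t}$ versus $e^{it}$ in the Bessel integral; everything else is bookkeeping. I note the statement writes $\overline{c_\lambda'}$ which I read as $\overline{c_{\lambda'}}$. No convergence issues arise because all sums are finite.
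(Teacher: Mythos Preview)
Your proposal is correct and follows essentially the same route as the paper: expand $f_n^2$, separate the diagonal (giving $\pi r^2$ via \eqref{eq:sum clamsqr=1}), translate to centre the ball at the origin, and identify the remaining integral as the Fourier transform of the disc indicator in terms of $J_1$. The paper rescales to the unit disc before invoking $\widehat{\chi}(\xi)=2\pi J_1(\|\xi\|)/\|\xi\|$, whereas you integrate directly over the radius-$r$ disc; this is only a cosmetic difference, and your remark about the $2\pi$ in the Bessel argument is exactly the normalization ambiguity present in the paper's own statement.
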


We note that $J_1(t)$ oscillates between positive and negative values, and that for all $T>0$,
\begin{equation} \label{BesselDecay}
\max_{T\le t <2T } |J_{1}(t)|\asymp \min \left\{ T, \frac{1}{T^{1/2}} \right\}.
\end{equation}

Before giving a proof of Lemma \ref{lem:L2 mass sum Bessel} we formulate the following
corollary establishing an explicit relation between the closest pairs of lattice points and
radii satisfying the equidistribution \eqref{eq:unif equidist sup r>r0}, of independent interest, towards proving Theorem \ref{thm:unif distr BR}.

\begin{corollary}
\label{cor:UpperBound}
Given $f_{n}\in \Ec_{n}$ as in \eqref{eq:f(x)=sum c*exp}, satisfying \eqref{eq:sum clamsqr=1}, $x\in \T$, $P\ge 1$ sufficiently large,
and
\begin{equation}
\label{eq:r>P*r2^2/3/mindist}
r \geq H \cdot \frac {r_2(n)^{2/3}} { \min_{\lambda\ne \lambda'}  \|\lambda-\lambda'\| } ,
\end{equation}
we have
\begin{equation}
\label{eq:L2mass=pir^2+O}
\int\limits_{B_{x}(r)}f_{n}^{2}dy =\left\{ \pi + O\left(\frac{1}{H^{3/2}}\right) \right\}   r^{2},
\end{equation}
where the constant involved in the $`O'$-notation is absolute.
\end{corollary}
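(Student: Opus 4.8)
The plan is to derive \eqref{eq:L2mass=pir^2+O} directly from Lemma \ref{lem:L2 mass sum Bessel} by bounding the Bessel sum
\[
\Sigma := 2\pi r^{2}\sum_{\lambda\ne\lambda'} c_{\lambda}\overline{c_{\lambda'}}\, e(\langle x,\lambda-\lambda'\rangle)\,\frac{J_{1}(r\|\lambda-\lambda'\|)}{r\|\lambda-\lambda'\|}
\]
in absolute value, uniformly in $x$. First I would apply the triangle inequality to reduce to estimating
\[
\sum_{\lambda\ne\lambda'} |c_{\lambda}||c_{\lambda'}|\,\frac{|J_{1}(r\|\lambda-\lambda'\|)|}{r\|\lambda-\lambda'\|}.
\]
Write $m := \min_{\lambda\ne\lambda'}\|\lambda-\lambda'\|$, so that under hypothesis \eqref{eq:r>P*r2^2/3/mindist} we have $rm \ge H\, r_2(n)^{2/3}$, which is large. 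For every pair $\lambda\ne\lambda'$ the argument $t = r\|\lambda-\lambda'\| \ge rm$ is large, so by the decay estimate \eqref{BesselDecay} (dyadically summed, or just the pointwise bound $|J_1(t)|\ll t^{-1/2}$ valid for $t\gg 1$) we get
\[
\frac{|J_{1}(r\|\lambda-\lambda'\|)|}{r\|\lambda-\lambda'\|} \ll \frac{1}{(r\|\lambda-\lambda'\|)^{3/2}} \le \frac{1}{(rm)^{3/2}}.
\]
Pulling this uniform bound out, the remaining sum is $\sum_{\lambda\ne\lambda'}|c_{\lambda}||c_{\lambda'}| \le \big(\sum_{\lambda}|c_{\lambda}|\big)^{2}$, and by Cauchy--Schwarz together with the normalization \eqref{eq:sum clamsqr=1}, $\sum_{\lambda}|c_{\lambda}| \le r_2(n)^{1/2}\big(\sum_{\lambda}|c_{\lambda}|^{2}\big)^{1/2} = r_2(n)^{1/2}$. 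Hence $\sum_{\lambda\ne\lambda'}|c_{\lambda}||c_{\lambda'}| \le r_2(n)$.

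Combining, $|\Sigma| \ll r^{2}\cdot r_2(n)\cdot (rm)^{-3/2}$. Now substitute the hypothesis $rm \ge H\, r_2(n)^{2/3}$: then $(rm)^{-3/2} \le H^{-3/2} r_2(n)^{-1}$, so
\[
|\Sigma| \ll r^{2}\cdot r_2(n)\cdot H^{-3/2} r_2(n)^{-1} = H^{-3/2} r^{2},
\]
which is exactly the claimed error term, and the argument is uniform in $x\in\T$. Adding back the main term $\pi r^{2}$ from Lemma \ref{lem:L2 mass sum Bessel} gives \eqref{eq:L2mass=pir^2+O}. (The exponent $2/3$ in \eqref{eq:r>P*r2^2/3/mindist} is chosen precisely so that the factor $r_2(n)$ coming from the double sum is cancelled by the $(rm)^{-3/2}$ decay — this is the balancing that the statement is built around.)

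The only real subtlety — and the one place I would be careful rather than cavalier — is making sure the pointwise Bessel decay bound $|J_1(t)|/t \ll t^{-3/2}$ holds for \emph{all} the relevant arguments $t = r\|\lambda-\lambda'\|$, not merely asymptotically; this is why the hypothesis asks for $P$ (i.e.\ $H$) "sufficiently large", so that $rm$, and hence every $r\|\lambda-\lambda'\|$, exceeds the threshold beyond which \eqref{BesselDecay} gives the clean $t^{-1/2}$ bound on $|J_1(t)|$. If one instead prefers to avoid any implicit largeness assumption, one splits the pairs into those with $r\|\lambda-\lambda'\| \le 1$, handled by $|J_1(t)|/t \ll 1$ and there being at most $r_2(n)^2$ such pairs (this contributes $\ll r^2 r_2(n)^2 \cdot (\text{something small})$ only when $rm\le 1$, which is excluded), and those with $r\|\lambda-\lambda'\| > 1$, handled as above. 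Given the hypothesis $rm\ge H r_2(n)^{2/3}\gg 1$, the first regime is empty, so no case split is actually needed. Thus the proof is genuinely short: it is Lemma \ref{lem:L2 mass sum Bessel}, the decay \eqref{BesselDecay}, Cauchy--Schwarz, and the normalization.
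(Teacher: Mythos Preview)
Your proof is correct and essentially identical to the paper's own argument: both apply Lemma~\ref{lem:L2 mass sum Bessel}, bound every term via the Bessel decay $|J_1(t)|/t \ll t^{-3/2}$ evaluated at the minimal gap $R=r\cdot\min_{\lambda\ne\lambda'}\|\lambda-\lambda'\|$, then use Cauchy--Schwarz and \eqref{eq:sum clamsqr=1} to bound $(\sum_\lambda |c_\lambda|)^2\le r_2(n)$, and finish by substituting $R\ge H\,r_2(n)^{2/3}$. Your closing remarks on why $H$ must be large (so that the $t^{-1/2}$ regime of \eqref{BesselDecay} applies to every pair) are a welcome clarification the paper leaves implicit.
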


Corollary \ref{cor:UpperBound} yields that for $H=H(n)\rightarrow\infty$ we have
\begin{equation*}
\int\limits_{B_{x}(r)}f_{n}^{2}dy =\left\{ \pi + o(1) \right\} r^{2},
\end{equation*}
uniformly for all $r$ satisfying \eqref{eq:r>P*r2^2/3/mindist}.

\begin{proof}[Proof of Corollary \ref{cor:UpperBound} assuming Lemma \ref{lem:L2 mass sum Bessel}]
Let $$R= r \cdot \min_{\lambda\ne \lambda'}  \|\lambda-\lambda'\|$$ so that
\eqref{eq:r>P*r2^2/3/mindist} is
\begin{equation}
\label{eq:R>Pr2^2/3}
R \ge H\cdot r_2(n)^{2/3}.
\end{equation}
Lemma \ref{lem:L2 mass sum Bessel} together with \eqref{BesselDecay} then yield
\begin{equation}
\label{eq:mass-exp<<R^-3/2r^2 sum}
\left|\ \int\limits_{B_{x}(r)}f_{n}^{2}dy - {\pi r^{2}}  \right|
 \ll  r^2 \cdot R^{-3/2} \left( \sum\limits_{\lambda }
 |c_{\lambda} | \right)^2.
\end{equation}
But $$\left( \sum_{\lambda } |c_{\lambda} | \right)^2\leq  \sum_{\lambda} 1\cdot  \sum_{\lambda } |c_{\lambda} |^2 = r_2(n)$$
by \eqref{eq:sum clamsqr=1} and the Cauchy-Schwarz inequality, so the upper bound in \eqref{eq:mass-exp<<R^-3/2r^2 sum} is
\begin{equation*}
\left|\ \int\limits_{B_{x}(r)}f_{n}^{2}dy - {\pi r^{2}}  \right| \ll r^2 \cdot  \frac{r_2(n)}{R^{3/2}} \ll
\frac{r^2}{H^{3/2}}
\end{equation*}
by \eqref{eq:R>Pr2^2/3}. The latter inequality is precisely the statement \eqref{eq:L2mass=pir^2+O} of Corollary \ref{cor:UpperBound}.
\end{proof}

\begin{proof}[Proof of Theorem \ref{thm:unif distr BR}]

Since we assumed the $BR(\delta)$ condition \eqref{eq:BR(delta)}, an application of Corollary \ref{cor:UpperBound} with
$H=n^{\eta}$ yields that
\begin{equation}
\label{eq:L2mass-pir^2<<n-2/3eta}
\left| \frac{ \int\limits_{B_{x}(r)}f(y)^{2}dy}{\pi r^{2}} - 1 \right| \ll n^{-3\eta/2}
\end{equation}
holds uniformly for all
\begin{equation*}
r> n^{-1/2+(\delta+\eta)} \cdot r_{2}(n)^{2/3}.
\end{equation*}
That \eqref{eq:L2mass-pir^2<<n-2/3eta}, in particular, holds for all $r>n^{-1/2+\epsilon}$, as claimed \eqref{eq:L2 disc < n^-3/2eta}
in Theorem \ref{thm:unif distr BR}, follows from \eqref{eq:N=n^o(1)} and our assumption $0<\eta < \epsilon-\delta.$

\end{proof}

Now we finally prove Lemma \ref{lem:L2 mass sum Bessel}.

\begin{proof}[Proof of Lemma \ref{lem:L2 mass sum Bessel}]
Upon multiplying \eqref{eq:f(x)=sum c*exp} with its conjugate, and separating the diagonal summands from the off diagonal,
for $x\in\T$ and $r>0$ we have that
\begin{equation*}
\int\limits_{B_{x}(r)}f(y)^{2}dy = \pi r^{2} +
\sum\limits_{\lambda\ne \lambda'}c_{\lambda}\overline{c_{\lambda'}} \int\limits_{B_{x}(r)}e(\langle \lambda-\lambda',y \rangle)dy.
\end{equation*}
Therefore, transforming the variables $y=r\cdot z + x $ with $z\in B_{0}(1)$, we have
\begin{equation*}
\int\limits_{B_{x}(r)}f(y)^{2}dy - \pi r^{2}  = r^{2}\sum\limits_{\lambda\ne \lambda'} c_{\lambda}\overline{c_{\lambda'}}e(\langle \lambda-\lambda',x \rangle)
\int\limits_{B(1)}e(\langle r(\lambda-\lambda'),z \rangle)dz.
 \end{equation*}
where this time $B(1)\subseteq \R^{2}$ is the {\em Euclidian} centred unit ball.
This yields the identity
\begin{equation}
\label{eq:L2 mass sum lambda lambda'}
\int\limits_{B_{x}(r)}f(y)^{2}dy - \pi r^{2} = r^{2}\sum\limits_{\lambda\ne \lambda'}
c_{\lambda}\overline{c_{\lambda'}}e(\langle \lambda-\lambda',x \rangle) \cdot  \widehat{\chi}(r(\lambda-\lambda')),
\end{equation}
where $\chi$ is the characteristic of the unit disc. As $\chi$ is rotationally invariant so is its Fourier transform;
a direct computation shows that its Fourier transform is given explicitly by
\begin{equation*}
\widehat{\chi}(\xi) = 2\pi\frac{J_{1}(\|\xi\|)}{\|\xi\|}.
\end{equation*}
Substituting the latter into \eqref{eq:L2 mass sum lambda lambda'} yields the statement of Lemma \ref{lem:L2 mass sum Bessel}.
\end{proof}

Next we prove a strong version of Berry's conjecture for toral Laplace eigenfunctions.

\begin{corollary}
\label{cor:UpperBound2} For almost all $n\in S$, if  $f_{n}$ is as in \eqref{eq:f(x)=sum c*exp}, satisfying \eqref{eq:sum clamsqr=1},  and
\[
r \geq \frac {   (\log n)^{1+ \frac {\log 2}3 +\epsilon}  } {\sqrt{n }} ,
\]
then for all $x\in \T$ we have
\begin{equation*}
\int\limits_{B_{x}(r)}f_{n}^{2}dy =\{ \pi + o(1) \}   r^{2}.
\end{equation*}
\end{corollary}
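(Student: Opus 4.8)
The plan is to apply Corollary~\ref{cor:UpperBound} with a judicious choice of the parameter $H$, after inserting the information that almost all $n \in S$ satisfy a good Bourgain--Rudnick-type lower bound on $\min_{\lambda \ne \lambda'} \|\lambda - \lambda'\|$ together with the almost-everywhere bound on $r_2(n)$. Concretely, recall the consequence drawn from part~(2) of Theorem~\ref{thm:G* asymp} and Landau's count \eqref{eq:Landau M/sqrt(logM)}: if $\psi(n) = o(n^{1/2}/\log n)$ then $\min_{\lambda \ne \lambda'} \|\lambda - \lambda'\| > \psi(n)$ for almost all $n \in S$. Meanwhile, by \eqref{eq:N=n^o(1)}, $r_2(n) = O_\epsilon(n^\epsilon)$; but more relevantly, the normal order of $r_2(n)$ is roughly $(\log n)^{\log 2}$, and in fact for almost all $n$ one has $r_2(n) \le (\log n)^{\log 2 + o(1)}$. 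These two facts are what make the exponent $1 + \tfrac{\log 2}{3} + \epsilon$ appear.

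The key steps, in order, are as follows. First, fix $\epsilon > 0$ and restrict attention to the density-one set of $n \in S$ for which simultaneously (a) $\min_{\lambda \ne \lambda'} \|\lambda - \lambda'\| \ge \sqrt{n}/(\log n)^{1+\epsilon/2}$, which is legitimate since $\sqrt{n}/(\log n)^{1+\epsilon/2} = o(\sqrt n/\log n)$, and (b) $r_2(n) \le (\log n)^{\log 2 + \epsilon'}$ for a small $\epsilon'$ to be chosen; both hold for almost all $n$, so their intersection does too. Second, plug these bounds into the hypothesis \eqref{eq:r>P*r2^2/3/mindist} of Corollary~\ref{cor:UpperBound}: the right-hand side $H \cdot r_2(n)^{2/3}/\min_{\lambda\ne\lambda'}\|\lambda-\lambda'\|$ is at most
\[
H \cdot (\log n)^{(2/3)(\log 2 + \epsilon')} \cdot \frac{(\log n)^{1+\epsilon/2}}{\sqrt n}
= \frac{H \cdot (\log n)^{1 + \frac{\log 2}{3} + \frac{2\epsilon'}{3} + \frac{\epsilon}{2}}}{\sqrt n}.
\]
Third, choose $H = H(n) \to \infty$ slowly, say $H = \log\log n$, and choose $\epsilon'$ and the two ``$\epsilon/2$'' budgets so that the total exponent of $\log n$ is at most $1 + \tfrac{\log 2}{3} + \epsilon$; then any $r \ge (\log n)^{1 + \frac{\log 2}{3} + \epsilon}/\sqrt n$ satisfies \eqref{eq:r>P*r2^2/3/mindist}. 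Corollary~\ref{cor:UpperBound} then gives $\int_{B_x(r)} f_n^2\,dy = \{\pi + O(H^{-3/2})\} r^2 = \{\pi + o(1)\} r^2$ uniformly in $x \in \T$, which is the claim.

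The main obstacle — really the only non-bookkeeping point — is justifying that $r_2(n) \le (\log n)^{\log 2 + o(1)}$ for almost all $n \in S$. This is a classical fact: writing $n = 2^a \prod_{p \equiv 1(4)} p^{b_p} \prod_{q \equiv 3(4)} q^{c_q}$ with all $c_q$ even, one has $r_2(n) \asymp \prod_{p \equiv 1(4)} (b_p + 1)$, and the number of prime divisors $\equiv 1 \pmod 4$ of a typical $n \le N$ is $\sim \tfrac12 \log\log N$ by a Turán/Erd\H{o}s--Kac type argument (restricted to the arithmetic progression), while the multiplicities $b_p$ are almost always $1$; hence $\log r_2(n) \sim \tfrac12 (\log 2) \log\log n$, i.e. $r_2(n) = (\log n)^{\frac{\log 2}{2} + o(1)}$ — in particular comfortably below $(\log n)^{\log 2 + \epsilon'}$. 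One must be slightly careful that this normal-order statement survives restriction to $n \in S$ (which forces all $c_q$ even) rather than to all integers, but the standard sieve/Selberg--Delange machinery handles the progression $p \equiv 1 \pmod 4$ without difficulty, and one only needs the upper bound ``$r_2(n) \le (\log n)^{\log 2 + \epsilon'}$ off a density-zero set,'' which is even softer. Everything else is a direct substitution into Corollary~\ref{cor:UpperBound}.
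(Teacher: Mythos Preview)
Your approach is the same as the paper's: feed the almost-sure lower bound on $\min_{\lambda\ne\lambda'}\|\lambda-\lambda'\|$ coming from Theorem~\ref{thm:G* asymp} and the normal order of $r_2(n)$ on $S$ into Corollary~\ref{cor:UpperBound} with $H\to\infty$.

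There is, however, an internal inconsistency in your bookkeeping for $r_2(n)$. In step~(b) you state the bound as $r_2(n)\le (\log n)^{\log 2+\epsilon'}$, and in the displayed line you write $r_2(n)^{2/3}\le (\log n)^{(2/3)(\log 2+\epsilon')}$; but $(2/3)(\log 2+\epsilon')=\tfrac{2\log 2}{3}+\tfrac{2\epsilon'}{3}$, not the $\tfrac{\log 2}{3}+\tfrac{2\epsilon'}{3}$ you then record. With the bound as stated you would only reach the exponent $1+\tfrac{2\log 2}{3}+\epsilon$, which is too weak. What you actually need (and what your final paragraph correctly derives) is the sharper normal order $r_2(n)=(\log n)^{\frac{\log 2}{2}+o(1)}$ for $n\in S$; this is exactly the estimate \eqref{r2aa} in the paper, and with it $(2/3)\cdot\tfrac{\log 2}{2}=\tfrac{\log 2}{3}$ gives the claimed exponent. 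So the slip is in writing ``$\log 2$'' where ``$\tfrac{\log 2}{2}$'' was intended in step~(b); once that is fixed, your argument and the paper's coincide.
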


\begin{proof}

If $n\in S$ then we can write $n=Nm^2$, in which $m$ has only prime factors $\equiv 3 \pmod 4$, and $N$ has no such prime factors, and then $r_2(n)=r_2(N)$. Note that if $N=2^k\ell$ where $\ell$ is odd, then $r_2(N)=4\tau(\ell)$, where $\tau(.)$ is the divisor function. If $N$ is squarefree then $\tau(N)=4\cdot 2^{\omega_o(N)}$ where $\omega_o(N)$ denotes the number of distinct {\em odd} prime factors of $N$. A famous result of Hardy and Ramanujan states that
$$\omega(N)=\left\{ 1+o(1)\right\} \log\log N$$
for almost all integers $N$. However our integers $N$ only have odd prime factors that are $\equiv 1 \pmod 4$
so $$\omega(N)=\left\{ \frac 12+o(1)\right\} \log\log N$$ for almost all such integers $N\in S$.
Since most integers have only a small part involving squares, one can then deduce that for almost all integers $n\in S$, one has
\begin{equation}
\label{r2aa}
r_2(n) = 2^{ \left\{ \frac 12+o(1)\right\} \log\log n} = (\log n)^{ \frac {\log 2}2+o(1)}
\end{equation}

As discussed after its statement, Theorem \ref{thm:G* asymp}
implies that for almost all $n\in S$, all $\lambda\ne \lambda'\in \Ec_{n}$ satisfy
\begin{equation}
\label{eq:lamb-lamb'>>sqrtn/logn}
\|\lambda-\lambda'\|\gg \frac{\sqrt{n}}{\log n};
\end{equation}
we conjecture that this is ``best possible'', though this is irrelevant here.
The statement of Corollary \ref{cor:UpperBound2} then follows upon substituting the above two results \eqref{r2aa} and \eqref{eq:lamb-lamb'>>sqrtn/logn} into Corollary \ref{cor:UpperBound}.
\end{proof}

\section{Limitations on Berry's conjecture}

\label{sec:Berry limitations}

Next we prove the counterpoint to Corollary  \ref{cor:UpperBound}:

\begin{proposition}
\label{prop:not equidist ce}
\label{cor:LowerBound} For all $n\in S$ there exists an $f_{n}$ as in \eqref{eq:f(x)=sum c*exp}, satisfying \eqref{eq:sum clamsqr=1},
and a value of
\[
r \gg \frac {  1  } { \min_{\lambda\ne \lambda'}  \|\lambda-\lambda'\| }
\]
for which
\begin{equation*}
\left|\int\limits_{B_{0}(r)}f_{n}^{2}dy -\pi     r^{2} \right| \gg r^2 .
\end{equation*}
In fact we get this lower bound for almost all $x\in \T$.
\end{proposition}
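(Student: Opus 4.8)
The plan is to build an $f_n$ concentrated on just two antipodal pairs of lattice points realizing (close to) the minimal gap, so that the Bessel identity of Lemma~\ref{lem:L2 mass sum Bessel} produces a single dominant oscillating term that we can force to be of size $\asymp r^2$ for a suitable $r$.

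First I would pick $\lambda_0 \ne \lambda_0' \in \Ec_n$ with $\|\lambda_0 - \lambda_0'\| = \min_{\lambda\ne\lambda'}\|\lambda-\lambda'\| =: D$. Since we need a real-valued $f_n$, I will use the four points $\pm\lambda_0, \pm\lambda_0'$ and set $c_{\pm\lambda_0} = c_{\pm\lambda_0'} = 1/2$ (adjusting if $\lambda_0' = -\lambda_0$, in which case one takes a different nearby pair or simply two points; generically $D$ is realized by a genuinely distinct pair). These satisfy \eqref{eq:clambda,c-lambda} and \eqref{eq:sum clamsqr=1}. Plugging into Lemma~\ref{lem:L2 mass sum Bessel}, the off-diagonal sum over $\lambda\ne\lambda'$ collapses: the differences $\lambda-\lambda'$ that occur are $\pm(\lambda_0-\lambda_0')$, $\pm(\lambda_0+\lambda_0')$, and $\pm 2\lambda_0$, $\pm 2\lambda_0'$, with the shortest being $\pm(\lambda_0-\lambda_0')$ of length $D$. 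Evaluating at $x=0$ (or general $x$), the $D$-term contributes $2\pi r^2 \cdot 2\,|c_{\lambda_0}c_{\lambda_0'}|\cos(2\pi\langle x,\lambda_0-\lambda_0'\rangle)\,\dfrac{J_1(rD)}{rD}$, while every other term carries a factor $J_1(rL)/(rL)$ with $L \geq \|\lambda_0+\lambda_0'\|$ or $L\geq 2\min(\|\lambda_0\|,\|\lambda_0'\|) \asymp \sqrt n$, hence is smaller by a factor $\gg (D/\sqrt n)^{3/2}$ once $rD$ is bounded.

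Next, I would choose $r$ so that $rD = t_0$ where $t_0$ is a fixed positive constant at which $J_1(t_0)/t_0$ is bounded away from $0$ — e.g. $t_0$ near the first maximum of $t\mapsto J_1(t)/t$, so $|J_1(t_0)/t_0| \geq c > 0$. This gives $r = t_0/D \gg 1/D = 1/\min_{\lambda\ne\lambda'}\|\lambda-\lambda'\|$, as required. With this $r$, the main term has absolute value $\asymp r^2$ times $|\cos(2\pi\langle x,\lambda_0-\lambda_0'\rangle)|$, which is $\gg 1$ for almost all $x\in\T$ (it vanishes only on a measure-zero union of lines), and the remaining terms are $o(r^2)$ — or even just bounded by a small multiple of $r^2$, which suffices once $t_0$ is taken small enough that $(D/\sqrt n)^{3/2}$, or more simply the crude bound $1/(rL) \ll D/\sqrt n \le 1$, keeps them under control; if one wants this for every $n$ rather than almost every $n$, one uses that all competing lengths $L$ satisfy $L \geq D$ and in fact $L\gg \sqrt n \gg D$, together with the trivial bound $|J_1(t)/t|\le 1/2$, and picks $t_0$ a small enough absolute constant. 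Assembling these estimates yields $\bigl|\int_{B_0(r)}f_n^2\,dy - \pi r^2\bigr| \gg r^2$ for almost all $x$, which is the claim (stated at $x=0$ but proved for a.e.\ $x$, consistent with the last sentence of the Proposition).

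The main obstacle I anticipate is controlling the competing off-diagonal terms uniformly: one must be sure that $\|\lambda_0+\lambda_0'\|$, $2\|\lambda_0\|$, $2\|\lambda_0'\|$ are all genuinely of order $\sqrt n$ and in particular much larger than $D \le \sqrt{2n}$ is not automatic — but $\|\lambda_0 - \lambda_0'\|$ being the \emph{minimal} gap forces $D$ to be small only when lattice points cluster, whereas $\|\lambda_0+\lambda_0'\| = \|2\lambda_0 - (\lambda_0-\lambda_0')\|$ and the other sums are at least $\|2\lambda_0\| - D = 2\sqrt n - D$, which is $\gg \sqrt n$ unless $D$ is comparable to $\sqrt n$; in that degenerate regime $\min\|\lambda-\lambda'\|\asymp\sqrt n$, so $r\asymp 1/\sqrt n$ and one is at Planck scale where a direct two-point computation still works. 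A secondary point is the edge case $\lambda_0' = -\lambda_0$ (minimal gap realized by an antipodal pair), handled by noting $r_2(n)\geq 8$ for such clustering to be relevant, or by perturbing to the next-closest pair; these are routine case distinctions rather than real difficulties.
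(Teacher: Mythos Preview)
Your approach is correct and is essentially the paper's own: pick $\lambda,\lambda'$ realizing the minimal gap, put weight $1/2$ on $\pm\lambda,\pm\lambda'$, apply Lemma~\ref{lem:L2 mass sum Bessel}, and choose $r\asymp 1/\|\lambda-\lambda'\|$ so that the leading Bessel term is $\asymp r^2$ (with the cosine factor giving the ``almost all $x$'' statement). The paper actually records the resulting expression as a \emph{single} term $\pi r^2\cos(2\pi\langle x,\lambda-\lambda'\rangle)J_1(r\|\lambda-\lambda'\|)/(r\|\lambda-\lambda'\|)$, tacitly dropping the longer-difference contributions from $\lambda+\lambda'$ and $2\lambda,2\lambda'$ that you take the trouble to bound; so your discussion of the subsidiary terms is in fact more careful than the original.
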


\begin{proof}  Select $\lambda \ne \lambda'\in \Ec(n)$ for which $|\lambda - \lambda'|$ is minimal.
Let $c_j=0$ unless $j=\lambda, \lambda',\overline{\lambda}$ or $\overline{\lambda'}$, in which case we have $c_j=1/2$ (with obvious modifications if $\lambda$ or $\lambda'\in \mathbb R$ or $\lambda'=\overline{\lambda}$) . By Lemma \ref{lem:L2 mass sum Bessel}, we then have
\begin{equation*}
\int\limits_{B_{x}(r)}f_{n}^{2}dy - \pi r^{2}   =  \pi r^{2}
\cos (2\pi \langle x, \lambda-\lambda' \rangle)\frac{J_{1}(r\|\lambda-\lambda'\|)}{r\|\lambda-\lambda'\|}.
\end{equation*}
By \eqref{BesselDecay} we deduce that there exists $r\asymp 1/\|\lambda-\lambda'\|$ for which
\begin{equation*}
\left| \int\limits_{B_{x}(r)}f_{n}^{2}dy - \pi r^{2}   \right| \asymp  \pi r^{2}
| \cos (2\pi \langle x, \lambda-\lambda' \rangle)| .
\end{equation*}
The right-hand side will be big for most choices of $x$, but, in particular, taking $x=0$ we obtain
$$\left| \int\limits_{B_{0}(r)}f_{n}^{2}dy - \pi r^{2}   \right| \asymp r^2.$$
\end{proof}

\begin{corollary}
\label{cor:not equidist ce}
For almost all $n\in S$, there exists an $f_{n}$ as in \eqref{eq:f(x)=sum c*exp}, satisfying \eqref{eq:sum clamsqr=1}, and a value of
\[
r \geq \frac {   (\log n)^{ \frac {\log 2}2 -\epsilon}  } {\sqrt{n }} ,
\]
for which
\begin{equation*}
\int\limits_{B_{0}(r)}f_{n}^{2}dy -\pi     r^{2} \gg r^2 .
\end{equation*}
In fact we get this lower bound for almost all $x\in \T$.
\end{corollary}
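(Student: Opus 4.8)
The plan is to combine Proposition \ref{prop:not equidist ce} with the lower bound on the minimal gap between lattice points on a circle that one extracts from Theorem \ref{thm:G* asymp}, in exactly the same way that Corollary \ref{cor:UpperBound2} combines the upper-bound version of that gap estimate with Corollary \ref{cor:UpperBound}. The only genuinely new input needed is a \emph{lower} bound of the shape $\min_{\lambda\ne\lambda'}\|\lambda-\lambda'\| \ll \sqrt{n}/(\log n)^{(\log 2)/2 - \epsilon}$ valid for almost all $n\in S$; feeding this into Proposition \ref{prop:not equidist ce} and using $r\gg 1/\min_{\lambda\ne\lambda'}\|\lambda-\lambda'\|$ immediately yields a radius $r \gg (\log n)^{(\log 2)/2-\epsilon}/\sqrt{n}$ with $|\int_{B_0(r)} f_n^2\,dy - \pi r^2| \gg r^2$, and the ``almost all $x$'' clause is inherited verbatim from Proposition \ref{prop:not equidist ce}.

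First I would make the gap lower bound precise. The point is that $\min_{\lambda\ne\lambda'}\|\lambda-\lambda'\|$ can only be \emph{large} (close to $\sqrt{n}$) when $\Ec_n$ is small, i.e.\ when $r_2(n)$ is small; conversely, for the typical $n\in S$ we have $r_2(n) = (\log n)^{(\log 2)/2 + o(1)}$ by \eqref{r2aa}. Once a circle of radius $\sqrt{n}$ carries $r = r_2(n)$ lattice points, a pigeonhole argument on the arc they subtend forces two of them within distance $O(\sqrt{n}/r_2(n))$: the $r_2(n)$ points lie on a circle of circumference $2\pi\sqrt{n}$, so some adjacent pair (in angular order) is separated by an arc of length $\le 2\pi\sqrt{n}/r_2(n)$, hence by Euclidean distance $\ll \sqrt{n}/r_2(n)$. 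Combining this with the almost-all asymptotic $r_2(n)=(\log n)^{(\log 2)/2+o(1)}$ gives, for almost all $n\in S$,
\[
\min_{\lambda\ne\lambda'\in\Ec_n}\|\lambda-\lambda'\| \ll \frac{\sqrt{n}}{(\log n)^{(\log 2)/2+o(1)}} \le \frac{\sqrt{n}}{(\log n)^{(\log 2)/2 - \epsilon}},
\]
the last inequality holding for $n$ large (depending on $\epsilon$), since the $o(1)$ error is absorbed by $\epsilon$.

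Then I would simply invoke Proposition \ref{prop:not equidist ce}: it produces, for \emph{every} $n\in S$, an $f_n$ of the required form and a radius $r\asymp 1/\min_{\lambda\ne\lambda'}\|\lambda-\lambda'\|$ with $\big|\int_{B_x(r)} f_n^2\,dy - \pi r^2\big|\asymp r^2 |\cos(2\pi\langle x,\lambda-\lambda'\rangle)|$; taking $x=0$ gives $\int_{B_0(r)}f_n^2\,dy-\pi r^2\gg r^2$, and for the set of $x\in\T$ with $|\cos(2\pi\langle x,\lambda-\lambda'\rangle)|\gg 1$ — which has full measure up to an arbitrarily small constant, in fact one can take measure $1-O(\epsilon')$ — the same bound holds. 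Restricting to the density-one set of $n$ furnished by the previous paragraph and substituting the gap bound into $r\asymp 1/\min_{\lambda\ne\lambda'}\|\lambda-\lambda'\|$ yields $r\gg (\log n)^{(\log 2)/2-\epsilon}/\sqrt{n}$, which is the assertion. The only mild subtlety — and the step I would be most careful about — is bookkeeping the two epsilons (the one in the statement versus the $o(1)$ coming from the Hardy–Ramanujan-type estimate \eqref{r2aa} and from Theorem \ref{thm:G* asymp}), making sure the density-one set does not depend on anything circular; but this is routine, and there is no real obstacle since all the hard analytic work is already contained in Proposition \ref{prop:not equidist ce} and Theorem \ref{thm:G* asymp}.
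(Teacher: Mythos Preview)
Your argument is correct and matches the paper's proof: pigeonhole on the $r_2(n)$ points around a circle of circumference $2\pi\sqrt{n}$ gives $\min_{\lambda\ne\lambda'}\|\lambda-\lambda'\|\le 2\pi\sqrt{n}/r_2(n)$, and substituting this into Proposition~\ref{prop:not equidist ce} together with \eqref{r2aa} yields the claim. One small remark: your framing invokes Theorem~\ref{thm:G* asymp}, but that result bounds the minimal gap from \emph{below} for almost all $n$ and hence points the wrong way here; your actual argument correctly relies only on the elementary pigeonhole bound and \eqref{r2aa}, exactly as the paper does.
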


\begin{proof}  There are $r_2(n)$ elements of $\Ec(n)$ on a circle of perimeter $2\pi \sqrt{n}$, and so $$\min_{\lambda\ne \lambda'}  \|\lambda-\lambda'\|  <2\pi \cdot\frac{\sqrt{n}}{r_2(n)}.$$ We substitute this bound into Proposition \ref{prop:not equidist ce} to obtain the lower bound
$r\gg r_2(n)/\sqrt{n}$ for all $n\in S$. The result now follows from \eqref{r2aa}.
\end{proof}

\begin{remark}
\label{rem:non-equidist}
We can infer from Corollaries
\ref{cor:UpperBound2} and \ref{cor:not equidist ce} that our interpretation of Berry's conjecture is generically true for 
$$r>\frac{(\log n)^A}{\sqrt{n}},$$ for any
$$A>1+\frac{\log 2}3=1.23104906\ldots,$$ and generically false for $$r>(\log n)^B/\sqrt{n},$$ with
$$B< \frac{\log 2}2=0.34657359\ldots.$$ We would guess that there exists some critical exponent $C>0$ such that the conjecture is
generically true for $$r>\frac{(\log n)^A}{\sqrt{n}}$$ for every $A>C$, and is generically false for 
$$r\leq \frac{(\log n)^B}{\sqrt{n}}$$ for every $B<C$.
However we do not have a guess for the value of $C$.

\vspace{2mm}

It should be possible to improve Corollary \ref{cor:not equidist ce} with the exponent $$\frac{\log 3}2=0.54930614\ldots$$ in place of $\frac{\log 2}2$, as follows:\ Almost all $n\in S$ can be written as $Nm$ where $N$ is product of distinct primes $\equiv 1 \pmod 4$, and $N$ has a particular structure: It consist of $(\frac 12 -o(1)) \log\log n$ prime factors each of which lies in the interval
$$\left[\exp( (\log n)^{o(1)}), \ \exp( (\log n)^{1-o(1)})\right].$$ We split this interval into dyadic intervals, and run though the integers $N$ composed of such primes.  If $p=a^2+b^2$ then the $a+ib$ should be more-or-less equidistributed in angle, so the set of elements of $\Ec(N)$, in which $N$ has exactly $k$ prime factors can be modelled by the random model
\[
\left\{   \sqrt{N} \ e \left( \sum_{i=1}^k \delta_i \phi_i \right);\ \delta_1,\ldots,\delta_k\in \{ -1,1\} \ \right\}
\]
where each $\phi_i$ is an iirv, uniformly distributed in $\mathbb R/\mathbb Z$.

Suppose that $\lambda,\lambda'$ are the closest two elements of
$\Ec_{N}$. If $|\alpha|^2=m$ then $\alpha\lambda,\alpha\lambda'\in \Ec(n)$, and
$$|\alpha\lambda-\alpha\lambda'|/\sqrt{n}=| \lambda- \lambda'|/\sqrt{N}.$$ Now this, according to the random model, should be roughly the expected value of the minimum of
\[
\left\{      \left| \sum_{i=1}^k \eta_i \phi_i \right|: \ \eta_1,\ldots,\eta_k\in \{ -1,0,1\} \ \right\}
\]
(where $2\eta_i=\delta_i-\delta_i'$). We can use Fourier analysis to ask for the expected number of such elements in an interval $[-\epsilon, \epsilon]$. As these are iirv's, all but the main term disappears, and so the answer as $2\epsilon\cdot 3^k$. Therefore we should be able to take $$\epsilon\approx 3^{-k}=(\log n)^{-\frac{\log 3} 2 +o(1)},$$ and so the claim. One would expect this to be unconditionally provable using the second moment method, though we leave this for other authors.
\end{remark}

\section{Proof of Theorem \ref{thm:gen n var estimate} and Corollaries \ref{cor: Locally flat} and \ref{cor: Flat result}}
\label{sec:var estimates}

\begin{lemma}
\label{lem:var sum lambda} Assume  \eqref{eq:sum clamsqr=1}. The variance \eqref{eq:Var(X) def} of $X$ is given by
 \begin{equation*}
\Vc(X) = 8\pi^{2}r^{4}\sum\limits_{\lambda\ne \lambda'} |c_{\lambda}c_{\lambda'}|^{2}
\frac{J_{1}(r\|\lambda-\lambda'\|)^{2}}{r^{2}\|\lambda-\lambda'\|^{2}}.
\end{equation*}
\end{lemma}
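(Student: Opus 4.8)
The plan is to compute the variance directly from its definition \eqref{eq:Var(X) def} by expanding the square, using the key identity from Lemma \ref{lem:L2 mass sum Bessel}, and then integrating over $x\in\T$. First I would start from the identity
\[
\int\limits_{B_{x}(r)}f_{n}^{2}dy - \pi r^{2} = 2\pi r^{2}\sum\limits_{\lambda\ne\lambda'}c_{\lambda}\overline{c_{\lambda'}}\,
e(\langle x,\lambda-\lambda'\rangle)\frac{J_{1}(r\|\lambda-\lambda'\|)}{r\|\lambda-\lambda'\|}
\]
supplied by Lemma \ref{lem:L2 mass sum Bessel}. Squaring this and integrating over $x\in\T$, the cross-terms correspond to quadruples $(\lambda,\lambda',\mu,\mu')$ with $\lambda\ne\lambda'$ and $\mu\ne\mu'$, and the integral $\int_{\T}e(\langle x,(\lambda-\lambda')-(\mu-\mu')\rangle)dx$ is $1$ when $\lambda-\lambda'=\mu-\mu'$ and $0$ otherwise — this is the standard orthogonality of characters on $\T$.

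The remaining step is to show that the only surviving quadruples are those with $\mu=\lambda$ and $\mu'=\lambda'$ (equivalently $\mu=\lambda'$, $\mu'=\lambda$, which by the symmetry $c_{\lambda}\overline{c_{\lambda'}}\leftrightarrow c_{\lambda'}\overline{c_{\lambda}}$ contributes the same amount and accounts for the overall factor). The point is that $\lambda,\lambda',\mu,\mu'$ all lie on the circle $\|\cdot\|^2=n$, and the condition $\lambda-\lambda'=\mu-\mu'$ forces $\mu=\lambda+(\lambda'-\mu')$; writing $v=\lambda-\mu=\lambda'-\mu'$, both $\lambda$ and $\lambda-v$ lie on $\Ec_n$, as do $\mu'$ and $\mu'+v$. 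A chord of a fixed circle is determined by its direction and length up to the two endpoints of a unique such chord, or equivalently: for a fixed nonzero vector $v$, there is at most one point $w\in\Ec_n$ with $w-v\in\Ec_n$ as well — indeed $\|w\|^2=\|w-v\|^2=n$ gives $2\langle w,v\rangle=\|v\|^2$, a line, which meets the circle of radius $\sqrt n$ in at most two points, but requiring both $w$ and $w-v$ to be among them and distinct pins down the pair. Hence $\lambda$ and $\mu'$ (the two "tails" of the two equal chords) must coincide, giving $\mu=\lambda$, $\mu'=\lambda'$. Using $c_{\mu}\overline{c_{\mu'}}=c_{\lambda}\overline{c_{\lambda'}}$ together with $c_{\lambda}\overline{c_{\lambda'}}\cdot\overline{c_{\lambda}\overline{c_{\lambda'}}}=|c_{\lambda}|^2|c_{\lambda'}|^2$ collapses the double sum to $\sum_{\lambda\ne\lambda'}|c_{\lambda}c_{\lambda'}|^2 J_1(r\|\lambda-\lambda'\|)^2/(r\|\lambda-\lambda'\|)^2$, and the prefactor $(2\pi r^2)^2=4\pi^2 r^4$ together with the factor $2$ from the two orderings yields the claimed $8\pi^2 r^4$.

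The main obstacle, and the only genuinely geometric input, is the claim that $\lambda-\lambda'=\mu-\mu'$ with all four points on $\Ec_n$ forces the pairs to coincide (in one of the two orders). I would handle this carefully via the chord argument above: the equation $\|w\|^2=\|w-v\|^2$ defines an affine line perpendicular to $v$, so it is intersected by the circle $\|w\|^2=n$ in at most two points; if both intersection points were realized by distinct lattice points of $\Ec_n$ one checks they must be $w$ and $w-v$ themselves, so no ambiguity remains and $\mu=\lambda$. Everything else is routine expansion, Fubini to interchange sum and integral (justified since $\Ec_n$ is finite), and character orthogonality.
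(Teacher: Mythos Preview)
Your overall strategy matches the paper's: start from Lemma \ref{lem:L2 mass sum Bessel}, square, integrate over $x\in\T$, and reduce to identifying the surviving ``diagonal'' quadruples. The paper squares without conjugating, so its diagonal condition reads $\lambda-\lambda'+\lambda''-\lambda'''=0$; you implicitly conjugate the second factor (harmless, since $X-\E[X]$ is real) and get $\lambda-\lambda'=\mu-\mu'$. So far so good.

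The gap is in your classification of the solutions. You assert the two cases are $(\mu,\mu')=(\lambda,\lambda')$ and $(\mu,\mu')=(\lambda',\lambda)$, but the latter forces $\lambda-\lambda'=\lambda'-\lambda$, i.e.\ $\lambda=\lambda'$, which is excluded. Your chord argument also breaks: the line $2\langle w,v\rangle=\|v\|^{2}$ meets the circle in at most two points, but $w-v$ lies on the \emph{different} line $2\langle\,\cdot\,,v\rangle=-\|v\|^{2}$, so ``both $w$ and $w-v$ are among them'' is simply false. In fact both intersection points $w_{1},w_{2}$ of the first line with the circle are valid heads of a chord of displacement $v$, and one checks $w_{1}+w_{2}=v$, so the second solution to $\lambda-\lambda'=\mu-\mu'$ on $\Ec_{n}$ is $(\mu,\mu')=(-\lambda',-\lambda)$, not $(\lambda',\lambda)$.

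This is precisely what the paper records (in its notation: either $\lambda=\lambda''',\ \lambda'=\lambda''$, or $\lambda=-\lambda'',\ \lambda'=-\lambda'''$). The antipodal case contributes the coefficient $c_{\lambda}\overline{c_{\lambda'}}\cdot\overline{c_{-\lambda'}}\,c_{-\lambda}$, and it is the reality condition \eqref{eq:clambda,c-lambda}, $c_{-\lambda}=\overline{c_{\lambda}}$, that collapses this to $|c_{\lambda}c_{\lambda'}|^{2}$ and yields the factor $2$ (hence $8\pi^{2}$). Your write-up never invokes \eqref{eq:clambda,c-lambda}, which is the tell that the second case has been misidentified. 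The repair is immediate once you replace $(\lambda',\lambda)$ by $(-\lambda',-\lambda)$ and apply \eqref{eq:clambda,c-lambda}, but as written the argument is incorrect.
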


\begin{proof}[Proof of Theorem \ref{thm:gen n var estimate} assuming Lemma \ref{lem:var sum lambda}]

We invoke Lemma \ref{lem:var sum lambda} and separate the near-diagonal terms
$$0< |\lambda-\lambda'| < \frac{1}{\xi r}$$ from the rest to yield
\begin{equation*}
\begin{split}
\frac{\Vc(X)}{r^{4}} &\ll \sum\limits_{0< |\lambda-\lambda'| < \frac{1}{\xi r}} |c_{\lambda}c_{\lambda'}|^{2}
\frac{J_{1}(r^{2}\|\lambda-\lambda'\|^{2})}{r^{2}\|\lambda-\lambda'\|^{2}} \\&+
\sum\limits_{|\lambda-\lambda'| \ge \frac{1}{\xi r}} |c_{\lambda}c_{\lambda'}|^{2}
\frac{J_{1}(r^{2}\|\lambda-\lambda'\|^{2})}{r^{2}\|\lambda-\lambda'\|^{2}}.
\end{split}
\end{equation*}
Upon using the bound $J_{1}(t) \ll t$ for the range $|\lambda-\lambda'| < \frac{1}{\xi r},$ and
the bound $J_{1}(t) \ll \frac{1}{\sqrt{t}}$ for $|t|\geq 1/\xi$ (see \eqref{BesselDecay}), we obtain the estimate
\begin{equation}
\label{eq:V(X)/r4<<sum nondiag}
\frac{\Vc(X)}{r^{4}} \ll \sum\limits_{0< |\lambda-\lambda'| < \frac{1}{\xi r}} |c_{\lambda}c_{\lambda'}|^{2}+
\sum\limits_{|\lambda-\lambda'| \ge \frac{1}{\xi r}} |c_{\lambda}c_{\lambda'}|^{2}
\frac{1}{r^{3}\|\lambda-\lambda'\|^{3}}.
\end{equation}
For the latter summation in \eqref{eq:V(X)/r4<<sum nondiag} we have
\begin{equation*}
\sum\limits_{|\lambda-\lambda'| \ge \frac{1}{\xi r}} |c_{\lambda}c_{\lambda'}|^{2}
\frac{1}{r^{3}\|\lambda-\lambda'\|^{3}} \le \xi^{3} \cdot \sum\limits_{\lambda,\lambda'\in\Ec_{n}} |c_{\lambda}c_{\lambda'}|^{2}
= \xi^{3}
\end{equation*}
by \eqref{eq:sum clamsqr=1}. The result follows.
\end{proof}

\begin{proof}[Proof of Lemma \ref{lem:var sum lambda}]
By Lemma \ref{lem:L2 mass sum Bessel} we have
\begin{equation}
\label{eq:X-EX^2 sum lambda1234}
\begin{split}
(X_x-\E[X_x])^{2} = 4\pi^{2} r^{4 }
\sum\limits_{\lambda\ne \lambda',\lambda''\ne \lambda'''}&c_{\lambda}\overline{c_{\lambda'}}c_{\lambda''}
\overline{c_{\lambda}'''}
e(\langle x, \lambda-\lambda'+\lambda''-\lambda''' \rangle)\times\\&\times\frac{J_{1}(r\|\lambda-\lambda'\|)J_{1}(r\|\lambda''-\lambda'''\|)}
{r^{2}\|\lambda-\lambda'\|\cdot \|\lambda''-\lambda'''\|}.
\end{split}
\end{equation}
Integrating w.r.t. $x\in\T$ we are only left with the diagonal:
\begin{equation*}
\begin{split}
\Vc(X) &=   \int\limits_{\T}(X_x-\E[X_x])^{2}dx \\ &= 4\pi^{2} r^{4 }
\sum\limits_{\substack{\lambda\ne \lambda',\lambda''\ne \lambda''' \\ \lambda-\lambda'+\lambda''-\lambda'''=0}} c_{\lambda}\overline{c_{\lambda'}}c_{\lambda''}
\overline{c_{\lambda}'''}  \frac{J_{1}(r\|\lambda-\lambda'\|)J_{1}(r\|\lambda''-\lambda'''\|)}
{r^{2}\|\lambda-\lambda'\|\cdot \|\lambda''-\lambda'''\|}.
\end{split}
\end{equation*}
Now, as $\lambda\ne \lambda'$ and $\lambda''\ne \lambda'''$ we have $\lambda-\lambda'+\lambda''-\lambda'''=0$ if and only if either ($\lambda=-\lambda''$ and $\lambda'=-\lambda'''$) or
($\lambda=\lambda'''$ and $\lambda'=\lambda''$). Using this together with \eqref{eq:clambda,c-lambda} we deduce 
the statement of Lemma \ref{lem:var sum lambda}.
\end{proof}

\subsection{Proof of Corollaries \ref{cor: Locally flat} and \ref{cor: Flat result}}

\begin{proof} [Proof of Corollary \ref{cor: Locally flat}]

An application of Theorem \ref{thm:gen n var estimate} with $\xi=\epsilon$ yields the bound
\begin{equation}
\label{eq:V(X)<<sum l-l'<eta+eps^3}
\Vc(X) \ll \left(\sum\limits_{0<|\lambda-\lambda'| < \eta\sqrt{n}} |c_{\lambda}|^{2}|c_{\lambda'}|^{2} +\epsilon^{3}\right)r^{4} .
\end{equation}
Now
\begin{equation*}
\begin{split}
\sum\limits_{0<|\lambda-\lambda'| < \eta\sqrt{n}} |c_{\lambda}|^{2}|c_{\lambda'}|^{2} &\le
\sum\limits_{\lambda\in\Ec_{n}}|c_{\lambda}|^{2}\sum\limits_{|\lambda-\lambda'| < \eta\sqrt{n}}|c_{\lambda'}|^{2} \le
\epsilon \sum\limits_{\lambda\in\Ec_{n}}|c_{\lambda}|^{2} = \epsilon,
\end{split}
\end{equation*}
by \eqref{eq:Being Flat} and \eqref{eq:sum clamsqr=1}.
The statement of Corollary \ref{cor: Locally flat} then follows upon substitute the latter inequality into \eqref{eq:V(X)<<sum l-l'<eta+eps^3}.

\end{proof}

\begin{proof} [Proof of Corollary \ref{cor: Flat result}]

The assumption that $f$ is flat implies that \eqref{eq:Being Flat} holds with $\eta= n^{ -\epsilon_n}$.
A straightforward application of Corollary \ref{cor: Locally flat} with $\eta=n^{ -\epsilon_n}$ and any fixed $\epsilon$ yields
the first statement of this corollary.

For the second part we apply Theorem \ref{thm:gen n var estimate} with $\xi= r_{2}(n)^{  -\epsilon}$ to yield the bound
\begin{equation}
\label{eq:Var<<sum close +r-eps}
\Vc(X) \ll \left(\sum\limits_{0 < |\lambda-\lambda'| < r_{2}(n)^{\epsilon}/r}|c_{\lambda}c_{\lambda'}|^{2}   +
r_{2}(n)^{-3\epsilon}\right) \cdot r^{4},
\end{equation}
and for $\epsilon$-ultraflat functions \eqref{eq:clam ultraflat} we have
\begin{equation}
\label{eq:sum<<close pairs ultraflat}
\begin{split}
\sum\limits_{0 < |\lambda-\lambda'| < r_{2}(n)^{\epsilon}/r}|c_{\lambda}c_{\lambda'}|^{2} \le
\frac{1}{r_{2}(n)^{2-2\epsilon}}\# \left\{ \lambda\ne\lambda'\in\Ec_{n}:\: |\lambda-\lambda'| < \frac{r_{2}(n)^{\epsilon}}{r}\right\}.
\end{split}
\end{equation}
For $$r\gg \frac{1}{n^{1/2-4\epsilon}}\ge \frac{r_{2}(n)^{\epsilon}}{n^{1/2-3\epsilon}}$$ we may bound the latter as
\begin{equation*}
\begin{split}
&\# \left\{ \lambda\ne\lambda'\in\Ec_{n}:\: |\lambda-\lambda'| < \frac{r_{2}(n)^{\epsilon}}{r}\right\}
\\&\le \#\left\{ \lambda\ne\lambda'\in\Ec_{n}:\: |\lambda-\lambda'| < n^{1/2-3\epsilon}\right\} \ll r_{2}(n)^{2-3\epsilon}
\end{split}
\end{equation*}
by Theorem \ref{thm:close pairs}. The result finally follows upon substituting the latter estimate into \eqref{eq:sum<<close pairs ultraflat},
and then finally into \eqref{eq:Var<<sum close +r-eps}.

\end{proof}

\section{Close lattice points on a  given circle}
\label{sec:close pairs proof}

Our goal is to prove Theorem \ref{thm:close pairs}.
Our proof yields the more explicit upper bound,
\begin{equation*}
\#\left\{ \alpha,\beta \in \Ec_{n}:\: |\alpha-\beta| \le n^{1/2-\epsilon}\right\} \ll
r_{2}(n)^{2-\epsilon}  + \frac{1}{\epsilon}\cdot  r_{2}(n)^{2-2\epsilon}.
\end{equation*}
In particular if $\epsilon=\frac{ \log\log |\Ec_{n}|}{\log |\Ec_{n}|}$ the bound is
\begin{equation*}
\#\left\{ \alpha,\beta \in \Ec_{n}:\: |\alpha-\beta| \le n^{1/2-\epsilon}\right\} \ll
\frac{r_{2}(n)^{2}}{\log r_{2}(n)}.
\end{equation*}

We will also show, using the result of Cilleruelo and Cordoba \cite{CC}, that we can replace the ``$-\epsilon$"
in the exponent on the right-hand side of \eqref{eq:close<n^1/2-eps<<r2^2-eps} by ``$-\tau\epsilon $" for any fixed $\tau<4$.

\subsection{The structure of the sets $\Ec_{n}$.} If $\alpha\in \Ec_{n}$ then so are $u\alpha$ for each  $u\in \CU:=\{ 1,-1, i, -i\}$, the set of units of $\mathbb Z[i]$. Note that there is therefore a unique $u\alpha=a+ib$ in first quadrant, so that $a>0$ and $b\geq 0$. We now describe the structure of  the quotient set
\[
  \Ec^*(n) : = \Ec_{n}/\CU.
\]
The key observation is that these sets are multiplicative; that is, $$\Ec^*(mn) = \Ec^*(m) \cdot \Ec^*(n)$$ if $(m,n)=1$ and all of the products are distinct, and so, in particular, $$r_2(mn)/4=(r_2(m)/4) \cdot (r_2(n)/4).$$ Therefore to fully understand the sets $\Ec_{n}$ we need only focus on $\Ec^*(p^k)$.
If $p\equiv 3 \pmod 4$ then $\Ec^*(p^k)=\emptyset$ if $k$ is odd, and $\Ec^*(p^k)=\{ p^{k/2}\}$ if $k$ is even.
Also $$\Ec^*(2^k)=\{ (1+i)^{k}\}$$ for all $k\geq 1$. If $p\equiv 1 \pmod 4$ then, as is well known, there are integers $a,b$, unique  up to sign and swapping their order, for which $p=a^2+b^2$. Therefore if $P=a+ib$ then $\Ec^*(p)=\{ P,\overline{P}\}$, and
$$\Ec^*(p^k)=\{ P^k,\ P^{k-1}\overline{P},\ldots, P \overline{P}^{k-1},  \overline{P}^k  \}.$$

\subsection{A first bound, using the structure}
Throughout this section we will assume, without loss of generality,
that
\begin{equation}
\label{eq:n=prod prim pow 1}
n=\prod_{i=1}^k p_i^{n_i}
\end{equation}
where each $p_j\equiv 1 \pmod 4$, at first in no particular order, then
later for non-increasing $\left\{\frac{\log(n_{i}+1)}{n_{i}\log{p_{i}}}\right\}$
(see section \ref{sec:balance prime powers}).

\begin{lemma}
\label{L2.1}
For every $n$ of the form \eqref{eq:n=prod prim pow 1}, let $m$ be given by
\begin{equation}
\label{eq:m=prod coprim n/m}
m=\prod_{i=1}^\ell p_i^{n_i}
\end{equation}
and $\theta\in \mathbb R/\mathbb Z$. The number of
$\lambda\in \Ec_{n}$ satisfying
\begin{equation}
\label{eq:lambda-exp(th)<sqrt(n/2m)}
|\lambda-\sqrt{n}e^{2i\pi \theta}|<\sqrt{ n/2m}
\end{equation}
is $\leq 4\prod_{i=\ell+1}^k (n_i+1).$
 \end{lemma}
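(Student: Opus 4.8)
The plan is to exploit the multiplicative structure of $\Ec^*(n)$ described in the previous subsection: every $\lambda \in \Ec_n$ factors as $\lambda = \mu \cdot \nu$ where $\mu$ ranges over representatives of $\Ec^*(m)$ (equivalently, $\mu$ is a Gaussian integer with $|\mu|^2 = m$) and $\nu$ ranges over $\Ec_{n/m}$ — here $(m, n/m) = 1$ by construction. Writing $\sqrt{n}e^{2\pi i \theta} = \sqrt{m}e^{2\pi i \theta_1}\cdot \sqrt{n/m}e^{2\pi i \theta_2}$ for suitable $\theta_1, \theta_2$, I want to argue that a point $\lambda$ lying in the disc \eqref{eq:lambda-exp(th)<sqrt(n/2m)} of radius $\sqrt{n/2m}$ forces its ``$\mu$-part'' to be essentially pinned down, so that at most one value of $\mu \in \Ec^*(m)$ can occur. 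Once $\mu$ is fixed (up to units), $\lambda$ is determined by $\nu \in \Ec_{n/m}$, and $\#\Ec_{n/m} = r_2(n/m) = \prod_{i=\ell+1}^k r_2(p_i^{n_i})/(\text{unit factors}) $; unwinding, $r_2(n/m) = 4\prod_{i=\ell+1}^{k}(n_i+1)$ when all $p_i \equiv 1 \pmod 4$, which is exactly the claimed bound (the factor $4$ absorbing the four units, and possibly an over-count that only helps).

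The key geometric step is the following separation estimate. If $\lambda = \mu\nu$ and $\lambda' = \mu'\nu'$ are two points of $\Ec_n$ with $\mu \ne u\mu'$ for every unit $u$ — i.e. $\mu, \mu'$ represent distinct classes in $\Ec^*(m)$ — then I claim $\lambda$ and $\lambda'$ cannot both lie in the disc \eqref{eq:lambda-exp(th)<sqrt(n/2m)}, because that disc has diameter $2\sqrt{n/2m} = \sqrt{2n/m} = \sqrt{2}\,\sqrt{n/m}$, whereas $|\lambda - \lambda'| = |\nu| \cdot |(\mu/\nu)\nu - \mu'\nu'/\nu|$ — more cleanly, divide through: since $|\nu| = |\nu'| = \sqrt{n/m}$ is a common factor in modulus, one shows $|\mu\nu - \mu'\nu'| \geq \sqrt{n/m} \cdot \min |\,\mu \zeta - \mu'\,|$ over unit-modulus rotations $\zeta$... the honest way is: $|\mu\nu - \mu'\nu'|^2 = |\mu\nu|^2 + |\mu'\nu'|^2 - 2\operatorname{Re}(\mu\nu\overline{\mu'\nu'}) = 2n - 2\operatorname{Re}(\mu\overline{\mu'}\cdot \nu\overline{\nu'})$, and since $\mu\overline{\mu'} \in \Z[i]$ with $|\mu\overline{\mu'}|^2 = m^2$ and $\mu\overline{\mu'} \notin m\cdot\CU$ (as the classes differ), we have $\operatorname{Re}(\mu\overline{\mu'}\cdot\nu\overline{\nu'}) \le$ something bounded away from $n$. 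Concretely $\operatorname{Re}(\mu\overline{\mu'}\nu\overline{\nu'}) \le |\mu\overline{\mu'}\nu\overline{\nu'}|\cdot\cos(\text{angle})$, and one needs the angle to be bounded below; alternatively, and more robustly, write $w = \mu\overline{\mu'}\nu\overline{\nu'} \in \Z[i]$ with $|w|^2 = n^2$ and $w \ne n$ (since $w = n$ would force $\mu\overline{\mu'}\nu\overline{\nu'}$ real positive of modulus $n$, hence $= n$, forcing $\mu\overline{\mu'}$ and $\nu\overline{\nu'}$ to be conjugate units times their moduli, contradicting that the $\mu$-classes differ). Then $w \ne n$, $|w| = n$, $w \in \Z[i]$ gives $|n - w|^2 = 2n^2 - 2\operatorname{Re}(nw)/1 \ge$ ... since $n - w$ is a nonzero Gaussian integer we just need $\operatorname{Re}(w) \le n - 1$, whence $|\lambda - \lambda'|^2 = 2n - 2\operatorname{Re}(w)/n \ge 2n - 2(n-1)/n$ — wait, I should be careful: $\operatorname{Re}(\mu\overline{\mu'}\nu\overline{\nu'})$ is what appears, not $\operatorname{Re}(w)/n$. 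Let me restate: with $w := \mu\overline{\mu'}\nu\overline{\nu'}$, we have $|\lambda-\lambda'|^2 = 2n - 2\operatorname{Re}(w)$ only if... no: $|\mu\nu|^2 = mn/m\cdot$, actually $|\mu\nu|=\sqrt{mn/m}$ — I have $|\mu|^2 = m$, $|\nu|^2 = n/m$, so $|\mu\nu|^2 = n$, good, so $|\lambda-\lambda'|^2 = 2n - 2\operatorname{Re}(\mu\nu\overline{\mu'\nu'})$ and $\mu\nu\overline{\mu'\nu'} = \mu\overline{\mu'}\cdot\nu\overline{\nu'} = w$ with $|w| = |\mu||\mu'||\nu||\nu'| = \sqrt{m}\sqrt{m}\sqrt{n/m}\sqrt{n/m}\cdot$ — hmm $|w| = \sqrt{m}\cdot\sqrt{m}\cdot\sqrt{n/m}\cdot\sqrt{n/m}$ is wrong too; $|w| = |\mu\overline{\mu'}|\cdot|\nu\overline{\nu'}| = m \cdot (n/m) = n$. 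Good. So $w \in \Z[i]$, $|w| = n$, and the distinctness of $\mu$-classes gives $w \ne n$ (the real positive choice); therefore $\operatorname{Re}(w) \le n - 1/$(something), and in fact since $w \ne n$ with $|w| = n$, the nearest such $w$ has $\operatorname{Re}(w) = n - O(1)$ only if it is very close to the positive real axis, but being a Gaussian integer of modulus exactly $n$ at distance $\ge 1$ from $n$ forces $|n - w| \ge 1$, giving $n^2 + \operatorname{Re}(w)^2 + \operatorname{Im}(w)^2 - 2n\operatorname{Re}(w) \ge 1$, i.e. $2n^2 - 2n\operatorname{Re}(w) \ge 1$, so $\operatorname{Re}(w) \le n - 1/(2n)$, hence $|\lambda - \lambda'|^2 \ge 2n - 2(n - 1/(2n)) = 1/n$. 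That is far too weak against a disc of diameter $\sqrt{2n/m}$. So this crude bound does not suffice, and the real content must be a better lower bound on $|n - w|$ coming from the structure of $m$.

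The main obstacle, then, is precisely this separation estimate: showing that if $\mu, \mu' \in \Ec^*(m)$ are distinct then $|\mu\nu - \mu'\nu'| \gg \sqrt{n/m}$ (the scale of the disc), uniformly in the choice of $\nu, \nu' \in \Ec_{n/m}$. Equivalently, dividing by $|\nu| = \sqrt{n/m}$ and rotating, one needs: distinct classes of $\Ec^*(m)$, viewed as points on the circle of radius $\sqrt{m}$, are separated by $\gg \sqrt{m}$, i.e. by a bounded angle — which is NOT generally true for large $m$ (lattice points on a circle can be arbitrarily close). I think the resolution is that the disc radius $\sqrt{n/2m}$ is deliberately small, and the correct statement to prove by induction on the number of prime-power factors of $m$ is that the $\mu\nu$ with $\mu$ fixed and $\nu$ varying already fill a set that, projected appropriately, forces at most the claimed count — so I would restructure the argument as an induction: peel off one prime power $p_\ell^{n_\ell}$ at a time, at each stage showing that within the disc of the relevant radius the $p_\ell$-part of $\lambda$ can take at most $n_\ell + 1$ values (because the $\le n_\ell+1$ distinct classes $P_\ell^j\overline{P_\ell}^{n_\ell - j}$ spread out by more than the current disc diameter once rescaled), multiply the counts, and carry the induction down to $m' = p_1^{n_1}$, where the bound $4(n_1+1)\prod_{i=2}^k(\cdots)$... — honestly the cleanest path is: fix any $\lambda_0 \in \Ec_n$ in the disc, and observe that any other $\lambda$ in the disc satisfies $|\lambda - \lambda_0| < 2\sqrt{n/2m} = \sqrt{2n/m}$; write $\lambda = \lambda_0 \cdot \xi$ is impossible since $\xi \notin \Z[i]$ in general, so instead compare via $\lambda\overline{\lambda_0} \in \Z[i]$ with $|\lambda\overline{\lambda_0}| = n$ and $|\lambda\overline{\lambda_0} - n| = |\lambda - \lambda_0|\cdot|\overline{\lambda_0}|/$ — no. I will not resolve the exact mechanism here; the plan is to reduce, by the multiplicativity $\Ec^*(n) = \Ec^*(m)\cdot\Ec^*(n/m)$, to counting the possible $\Ec^*(m)$-components of points in the disc, bound that count by $1$ (or more carefully by induction on the factorization of $m$ by a suitable refinement), and then multiply by $\#\Ec_{n/m} = 4\prod_{i=\ell+1}^k(n_i+1)$. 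The crux — and expected main difficulty — is the quantitative claim that distinct $\Ec^*(m)$-classes cannot coexist in a disc of radius $\sqrt{n/2m}$ once the $\Ec_{n/m}$-factor is accounted for, which should follow from a careful Gaussian-integer distance computation combined with the fact that $\mu\overline{\mu'}$ is a Gaussian integer of modulus $m^2 - $ wait modulus $m$... of modulus $m$ — no, $|\mu\overline{\mu'}| = m$ — that is not a unit multiple of $m$, hence $|\mu\overline{\mu'} - m|^2 \ge $ and the geometry of where modulus-$m$ Gaussian integers sit near the positive real axis, scaled up by the $|\nu|^2 = n/m$ factor, gives exactly the threshold $\sqrt{n/2m}$.
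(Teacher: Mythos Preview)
Your central claim---that at most one class $\mu \in \Ec^*(m)$ can occur among the $\lambda$ in the disc---is false, and this is the genuine gap.  Take $n = 65 = 5\cdot 13$ with $m = 5$, so the disc has radius $\sqrt{65/10} = \sqrt{6.5}$.  The points $8+i = (2-i)(3+2i)$ and $8-i = (2+i)(3-2i)$ both lie in $\Ec_{65}$ and are at distance $2 < 2\sqrt{6.5}$ from one another, hence both fit in a disc of the allowed radius centred at $\sqrt{65}$; yet their $\Ec^*(5)$-components $2-i$ and $2+i$ represent distinct classes.  The difficulty you ran into (that $w = \mu\overline{\mu'}\nu\overline{\nu'}$ can have $\operatorname{Re}(w)$ very close to $n$) is real, not an artefact of a crude estimate: by choosing $\nu,\nu'$ appropriately one can rotate $\mu\overline{\mu'}$ arbitrarily close to the positive real axis.

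The paper's argument pigeonholes on the \emph{other} factor.  Write each $\lambda \in \Ec_n$ as $\lambda = \gamma\beta$ where $\gamma = u\prod_{i=\ell+1}^k P_i^{e_i}\overline{P_i}^{n_i-e_i}$ records the unit together with the $\Ec^*(n/m)$-part, and $\beta$ records the $\Ec^*(m)$-part.  There are exactly $4\prod_{i=\ell+1}^k(n_i+1)$ possible values of $\gamma$, so if more than that many $\lambda$ lie in the disc, two of them share the \emph{same} $\gamma$.  For such a pair $\lambda = \gamma\beta$, $\lambda' = \gamma\beta'$ the difference factors cleanly: $|\lambda-\lambda'| = |\gamma|\,|\beta-\beta'|$.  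Now $\beta \ne \beta'$ are Gaussian integers of equal modulus $\sqrt{m}$, and one checks that $|\beta-\beta'|\ge\sqrt{2}$ (if $|\beta-\beta'|=1$ then $\beta-\beta'$ is a unit, forcing $2\operatorname{Re}(\beta)$ or $2\operatorname{Im}(\beta)$ to equal $\pm 1$, impossible).  Hence $|\lambda-\lambda'| \ge \sqrt{n/m}\cdot\sqrt{2} = \sqrt{2n/m}$, which equals the diameter of the disc---contradiction.  The point you were missing is that sharing $\gamma$ (rather than trying to separate $\mu$-classes for arbitrary $\nu,\nu'$) is what makes $\lambda-\lambda'$ factor, converting the problem to a trivial lower bound on $|\beta-\beta'|$.
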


\begin{proof} Suppose there are $> 4\prod_{i=\ell+1}^k (n_i+1)$ numbers $\lambda\in \Ec_{n}$ satisfying
\eqref{eq:lambda-exp(th)<sqrt(n/2m)} for some $\theta\in \mathbb R/\mathbb Z$.
We write each $\lambda\in \Ec_{n}$ as
\[
\lambda:= u \prod_{i=1}^k P_i^{e_i} \overline{P_i}^{n_i-e_i}.
\]
For at least two of the $\lambda$ with $$|\lambda-\sqrt{n}e^{2i\pi \theta}|<\sqrt{ n/2m},$$ the
$u$, and the $e_i$ are the same for all $i>\ell$, by the pigeonhole principle; we write the two numbers
as $\lambda=\gamma\beta$ and $\lambda'=\gamma\beta'$ where
$$\gamma:= u \prod_{i=\ell+1}^k P_i^{e_i} \overline{P_i}^{n_i-e_i}.$$ Now $\beta-\beta'\in \Z[i]$ with
$|\beta|=|\beta'|$ so that $|\beta-\beta'|\geq \sqrt{2}$. Therefore
\begin{equation*}
\begin{split}
2\sqrt{ n/2m}&>|\lambda-\sqrt{n}e^{2i\pi \theta}|+|\lambda'-\sqrt{n}e^{2i\pi \theta}|\geq |\lambda-\lambda'|\\
& \geq \sqrt{2} |\gamma| =\sqrt{2}  \prod_{i=\ell+1}^k p_i^{n_i/2} =\sqrt{2n/m} ,
\end{split}
\end{equation*}
a contradiction.
\end{proof}

We can revisit Lemma \ref{L2.1} putting a better lower bound on $|\beta-\beta'|$ by using Cilleruelo-Cordoba ~\cite{CC}:\

\begin{lemma} \label{L2.1b} Fix $\epsilon>0$, and let $n$ be of the form
\eqref{eq:n=prod prim pow 1} and $m$ of the form \eqref{eq:m=prod coprim n/m}.
Then for every $\theta\in \mathbb R/\mathbb Z$ the number of
$\lambda\in \Ec_{n}$ satisfying $$|\lambda-\sqrt{n}e^{2i\pi \theta}|<\sqrt{n}/m^{1/4-\epsilon}$$
is $\ll (1/\epsilon) \tau(n/m)$.
\end{lemma}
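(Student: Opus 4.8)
The plan is to mimic the proof of Lemma \ref{L2.1}, replacing the elementary bound $|\beta-\beta'|\geq\sqrt2$ for distinct Gaussian integers of equal norm by the much stronger separation estimate of Cilleruelo and Cordoba \cite{CC}. Recall that \cite{CC} asserts that if $\beta\neq\beta'$ lie on a circle of radius $\sqrt{s}$ centred at the origin (so $|\beta|^2=|\beta'|^2=s$, with $\beta,\beta'\in\Z[i]$), then $|\beta-\beta'|\gg s^{1/4}$; more precisely the arc between two such points on the circle of radius $\sqrt s$ has length $\gg s^{1/4}$, and the same then holds for the chord. Here we would apply it with $s=n/m=\prod_{i=\ell+1}^k p_i^{n_i}$.

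First I would suppose, for contradiction, that there are more than $K:=4\lceil (1/\epsilon)\tau(n/m)\rceil$ values $\lambda\in\Ec_n$ with $|\lambda-\sqrt n\, e^{2i\pi\theta}|<\sqrt n/m^{1/4-\epsilon}$; the precise constant $K$ will be chosen at the end to make the pigeonhole step go through. Writing each such $\lambda=u\prod_{i=1}^k P_i^{e_i}\overline{P_i}^{n_i-e_i}$ exactly as in Lemma \ref{L2.1}, there are only $4\prod_{i=\ell+1}^k(n_i+1)=4\,\tau(n/m)$ possible choices for the pair $(u,(e_i)_{i>\ell})$, hence at least $\lceil 1/\epsilon\rceil+1$ of our $\lambda$'s share the same $\gamma:=u\prod_{i=\ell+1}^k P_i^{e_i}\overline{P_i}^{n_i-e_i}$, with $|\gamma|=\sqrt{n/m}$. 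Call these $\lambda_j=\gamma\beta_j$, $j=0,1,\dots,J$ with $J\geq\lceil 1/\epsilon\rceil$; the $\beta_j$ are distinct Gaussian integers all of norm $|\beta_j|^2=m$, and they all lie within distance $\sqrt n/(|\gamma|\, m^{1/4-\epsilon})=\sqrt m/m^{1/4-\epsilon}=m^{1/4+\epsilon}$ of the point $\sqrt m\, e^{2i\pi\theta}$ on the circle of radius $\sqrt m$.

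So the whole problem reduces to a one-circle statement: the number of lattice points of $\Ec_m$ lying in an arc neighbourhood of angular width comparable to $m^{1/4+\epsilon}/\sqrt m=m^{-1/4+\epsilon}$ is $O(1/\epsilon)$. By \cite{CC} consecutive points of $\Ec_m$ (ordered by argument) are separated by arc length $\gg m^{1/4}$, equivalently by angle $\gg m^{-1/4}$; an arc of angular width $\asymp m^{-1/4+\epsilon}$ therefore contains $\ll m^{\epsilon}$ of them. Wait --- this gives $m^\epsilon$, not $1/\epsilon$, so a cruder application of \cite{CC} is not enough, and the refinement must be that \cite{CC} actually bounds the number of points of $\Ec_m$ in \emph{any} arc of length $\ell$ by $\ll 1+\ell/m^{1/4}$ times a factor that is only logarithmic, or --- and this is the version I expect is needed --- that the number of points of $\Ec_m$ within distance $R$ of a fixed point on the circle is $\ll 1+\log(R/m^{1/4})$ when $R\le\sqrt m$. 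Applying this with $R=m^{1/4+\epsilon}$ gives $\ll 1+\log(m^{\epsilon})=1+\epsilon\log m$, which is still not $O(1/\epsilon)$; so in fact the right input must be a bound of the shape ``$\ll 1/\epsilon$ points within distance $m^{1/4+\epsilon}$'', which is exactly the strength of the Cilleruelo--Cordoba estimate when phrased multiplicatively in terms of how the $\epsilon$ shrinks the admissible arc. I would therefore cite the precise form of \cite{CC} giving that the number of $\beta\in\Z[i]$ with $|\beta|^2=m$ in a disc of radius $m^{1/4+\epsilon}$ is $\ll 1/\epsilon$, use it to conclude $J+1\ll 1/\epsilon$, contradict $J\geq\lceil 1/\epsilon\rceil$ once the implied constant is absorbed, and read off the bound $\#\{\lambda\}\ll(1/\epsilon)\tau(n/m)$. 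The main obstacle is pinning down exactly which quantitative form of the Cilleruelo--Cordoba theorem yields the clean $1/\epsilon$ (rather than $m^\epsilon$ or $\epsilon\log m$) dependence, and ensuring the constants in the pigeonhole/counting steps combine correctly; once that statement is isolated, the rest of the argument is a direct transcription of the proof of Lemma \ref{L2.1}.
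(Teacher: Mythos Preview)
Your overall strategy---pigeonhole on the tail $\gamma=u\prod_{i>\ell}P_i^{e_i}\overline{P_i}^{n_i-e_i}$ to force many $\beta$'s on the circle of radius $\sqrt m$, then invoke Cilleruelo--Cordoba there---is exactly the paper's approach. (One slip: you write ``apply it with $s=n/m$'' but then correctly compute $|\beta_j|^2=m$; the $\beta$'s live on the circle of radius $\sqrt m$, not $\sqrt{n/m}$.)

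The obstacle you hit at the end is real, and it stems from a sign error in the lemma as printed. The precise form of \cite{CC} the paper uses is: an arc of a circle of radius $R$ containing more than $2r$ lattice points has length $>\sqrt2\,R^{r/(2r+1)}$. Equivalently, an arc of length $\le R^{1/2-\delta}$ carries $\le 2r$ lattice points once $1/(4(2r+1))\le\delta$, i.e.\ $r\gg 1/\delta$, so the count is $\ll 1/\delta$. With $R=\sqrt m$ this reads: a disc of radius $m^{1/4-\delta}$ centred on the circle $|z|=\sqrt m$ contains $\ll 1/\delta$ points of $\Ec_m$. That is your ``version 4'' with the \emph{opposite} sign on the exponent; the version with $m^{1/4+\epsilon}$ is not a consequence of \cite{CC} (it would in fact be stronger than anything known).

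The paper's own proof exhibits the same discrepancy. It establishes the bound $\le 8r\,\tau(n/m)$ for the threshold
\[
|\lambda-\sqrt n\,e^{2i\pi\theta}|<\frac{\sqrt{n/2}}{m^{(r+1)/(2(2r+1))}},\qquad \frac{r+1}{2(2r+1)}=\frac14+\frac{1}{4(2r+1)}>\frac14,
\]
which lies \emph{below} $\sqrt n/m^{1/4}$ and hence below $\sqrt n/m^{1/4-\epsilon}$. The closing clause ``which is $\le n^{1/2}/m^{1/4-\epsilon}$'' is true but the inequality points the wrong way for the stated conclusion. The intended statement has $m^{1/4+\epsilon}$ in the denominator; with that correction your pigeonhole argument with $r\asymp 1/\epsilon$ goes through verbatim, and the application in the subsequent Remark (take $m\le n^{\tau\epsilon}$ with $\tau<4$, so $\sqrt n/m^{1/4+\epsilon'}\ge n^{1/2-\epsilon}$ for $\epsilon'$ small) is unaffected.
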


\begin{proof}
Cilleruelo and Cordoba \cite{CC} proved that an arc on a circle of radius $R$, which contains more than $2r$ lattice points, has length $> 2^{1/2} R^{r/(2r+1)}$. Therefore if our arc in the proof of Lemma \ref{L2.1}, contains  $>8r\prod_{i=\ell+1}^k (n_i+1)$  numbers $\lambda\in \Ec_{n}$, then we have more than $2r$ with the same $\gamma$, and so more than $2r$ lattice points $\beta$ lie  on a circle of radius $m^{1/2}$.
Thus the $\beta$-arc has width $> 2^{1/2} m^{r/2(2r+1)}$. We therefore deduce that there are $\leq 8r\prod_{i=\ell+1}^k (n_i+1)$  numbers $\lambda\in \Ec_{n}$ with $$|\lambda-\sqrt{n}e^{2i\pi \theta}|<\frac{(n/2)^{1/2}}{m^{(r+1)/2(2r+1)}},$$ which is $\leq  n^{1/2}/m^{1/4-\epsilon} $ if $r\gg 1/\epsilon$.
\end{proof}

\subsection{Balancing a prime power and its power}
\label{sec:balance prime powers}

For the rest of this section we will organize the $p_i$ so that the $\frac{\log(n_i+1)}{n_i\log p_i}$ are non-increasing. This will allow us to generalize the above argument to the case in which $m$ and $n/m$ are not necessarily coprime.

\begin{corollary} \label{C2.2}
If $m\geq n^{ \epsilon}$ or $n_{\ell+1}\ll 1/\epsilon$ then
\[
\#\{ (u,v)\in \Ec_{n}:\ |u-v|\leq n^{1/2-\epsilon}\}   \ll_\epsilon |\Ec_{n}|^{2-\epsilon}+
\frac{1}{\epsilon}\cdot |\Ec_{n}|^{2-2\epsilon}.
\]
 \end{corollary}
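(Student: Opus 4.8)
The plan is to deduce Corollary \ref{C2.2} from Lemmas \ref{L2.1} and \ref{L2.1b} by a covering argument on the circle of radius $\sqrt n$, summing over all the lattice points of $\Ec_n$ as potential ``centres''. First I would fix the splitting point $\ell$: set $m=\prod_{i=1}^\ell p_i^{n_i}$ to be the largest initial product (in the fixed ordering where $\frac{\log(n_i+1)}{n_i\log p_i}$ is non-increasing) that is still $\le n^{2\epsilon}$, say. Under the hypothesis $m\ge n^{\epsilon}$ we automatically have $n^{\epsilon}\le m\le n^{2\epsilon}$, while under the hypothesis $n_{\ell+1}\ll 1/\epsilon$ we can still guarantee $m$ is not too small (and control $n/m$) because the non-increasing ordering forces $\frac{\log(n_i+1)}{n_i\log p_i}$ to be small for all $i>\ell$, hence each $p_i^{n_i}$ with $i>\ell$ contributes a factor $(n_i+1)$ to $r_2$ that is small relative to the ``size'' $p_i^{n_i}$ it contributes to $n$; this is exactly the balancing that section \ref{sec:balance prime powers} is set up to exploit. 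In either case one arranges $m^{1/4}\gg n^{\epsilon/2+o(1)}$ wait — more carefully, one wants $\sqrt n/m^{1/4-\epsilon'}$ or $\sqrt{n/2m}$ to be $\ge n^{1/2-\epsilon}$, which is the regime in which Lemma \ref{L2.1} (or \ref{L2.1b}) directly bounds the number of $\lambda\in\Ec_n$ in such a ball.

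Next, the covering step. For each $\beta\in\Ec_n$, the pairs $(\alpha,\beta)$ with $|\alpha-\beta|\le n^{1/2-\epsilon}$ are exactly the $\alpha\in\Ec_n$ lying in the ball of radius $n^{1/2-\epsilon}$ about $\beta$; taking $\theta$ so that $\sqrt n e^{2\pi i\theta}=\beta$, this ball is contained in $\{\lambda:|\lambda-\sqrt n e^{2\pi i\theta}|<n^{1/2-\epsilon}\}$. If $m$ has been chosen so that $n^{1/2-\epsilon}\le \sqrt{n/2m}$ (i.e. $m\le n^{2\epsilon}/2$), Lemma \ref{L2.1} gives at most $4\prod_{i=\ell+1}^k(n_i+1)$ such $\alpha$; summing over the $r_2(n)=4\prod_{i=1}^k(n_i+1)$ choices of $\beta$ yields
\[
\#\{(u,v)\in\Ec_n:|u-v|\le n^{1/2-\epsilon}\}\ \le\ r_2(n)\cdot 4\prod_{i=\ell+1}^k(n_i+1)\ =\ \frac{r_2(n)^2}{\prod_{i=1}^\ell(n_i+1)}\ \ll\ \frac{r_2(n)^2}{m^{c}}
\]
for a suitable $c>0$ coming from the balancing estimate (the product $\prod_{i\le\ell}(n_i+1)$ is at least a fixed power of $m$, again by the non-increasing ordering together with $r_2(n)=n^{o(1)}$, i.e. \eqref{eq:N=n^o(1)}). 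Since $m\ge n^{\epsilon}$ and $r_2(n)=n^{o(1)}$, the denominator is $\gg r_2(n)^{c'}$ for some $c'$ proportional to $\epsilon$; carrying the constants through gives the $|\Ec_n|^{2-\epsilon}$ term. Using Lemma \ref{L2.1b} in place of Lemma \ref{L2.1} — valid when we only know $n_{\ell+1}\ll1/\epsilon$, where the Cilleruelo–Cordoba arc bound buys the extra saving at the cost of a $1/\epsilon$ factor and $\tau(n/m)$ in place of the cruder product — produces the $\frac1\epsilon|\Ec_n|^{2-2\epsilon}$ term, since $\tau(n/m)\le r_2(n)$ contributes another factor $n^{o(1)}$ that is absorbed into the exponent $2-2\epsilon$.

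The main obstacle, and the place where the ordering of the primes in section \ref{sec:balance prime powers} really earns its keep, is the two hypotheses ``$m\ge n^\epsilon$ or $n_{\ell+1}\ll1/\epsilon$'': I would need to show that with $\ell$ chosen greedily so that $m=\prod_{i\le\ell}p_i^{n_i}$ is the largest such product below $n^{2\epsilon}$, at least one of these holds, and then that the resulting $m$ is large enough (a power of $n^\epsilon$) that $\prod_{i\le\ell}(n_i+1)\gg m^{c}$ for an explicit $c$ — equivalently, that one cannot have many tiny prime-power factors each of which contributes a large divisor count but only a small amount to $n$. This is precisely controlled by the non-increasing sequence $\frac{\log(n_i+1)}{n_i\log p_i}$: if the leading ratio is $\ge\eta$ then $\prod_{i\le\ell}(n_i+1)\ge m^{\eta}$, and if it is $<\eta$ then every $p_i^{n_i}$ with $i>\ell$ is already ``balanced'' in the sense needed for $n_{\ell+1}\ll1/\epsilon$ after also using $p_i\ge5$. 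Pinning down the bookkeeping in this dichotomy — and checking the borderline case where $n$ is essentially a single large prime power, where one falls into the ``$n_{\ell+1}\ll1/\epsilon$'' branch and must invoke \ref{L2.1b} — is the technical heart of the argument; the rest is the routine summation above.
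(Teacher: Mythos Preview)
Your skeleton is exactly the paper's: fix $\ell$ so that $m=\prod_{i\le\ell}p_i^{n_i}$ is the largest initial product with $m\le n^{2\epsilon}/2$, then bound
\[
\#\{(u,v)\in\Ec_n:|u-v|\le n^{1/2-\epsilon}\}\ \le\ r_2(n)\cdot\max_\theta\#\{v\in\Ec_n:|v-\sqrt n\,e^{2\pi i\theta}|\le\sqrt{n/2m}\}\ \le\ r_2(n)\cdot 4\tau(n/m)
\]
via Lemma~\ref{L2.1}. But the crucial step after that is mis-executed in two ways.

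First, the claim ``$\prod_{i\le\ell}(n_i+1)$ is at least a fixed power of $m$'' is false in general (take $m$ prime) and is not what is needed. The paper's key observation is the clean inequality: writing $n_i+1=(p_i^{n_i})^{e_i}$ so that the ordering is $e_1\ge e_2\ge\cdots$, one has $\tau(n/m)=(n/m)^E$ and $\tau(m)=m^F$ with $E\le e_{\ell+1}\le e_\ell\le F$, and hence if $n/m=n^\delta$ then $\tau(n/m)\le\tau(n)^\delta$. Applied with $\delta\le 1-\epsilon$ (from the hypothesis $m\ge n^\epsilon$) this gives $\tau(n/m)\le\tau(n)^{1-\epsilon}$ directly, which is the $|\Ec_n|^{2-\epsilon}$ term. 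Your detour through $m^c$ and $r_2(n)=n^{o(1)}$ does not recover this exponent: the $o(1)$ is not uniform, so you cannot extract a clean $\epsilon$ in the exponent that way.

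Second, Lemma~\ref{L2.1b} is \emph{not} used for Corollary~\ref{C2.2}; the $\tfrac1\epsilon|\Ec_n|^{2-2\epsilon}$ term has a different source. Under the hypothesis $n_{\ell+1}\ll1/\epsilon$ one applies the \emph{same} inequality to $m^*:=mp_{\ell+1}^{n_{\ell+1}}$, which by the maximality of $\ell$ satisfies $m^*>n^{2\epsilon}/2$, hence $n/m^*\le 2n^{1-2\epsilon}$ and $\tau(n/m^*)\le 2\tau(n)^{1-2\epsilon}$. Then
\[
\tau(n/m)=(n_{\ell+1}+1)\,\tau(n/m^*)\ \ll\ \tfrac1\epsilon\,\tau(n)^{1-2\epsilon},
\]
and Lemma~\ref{L2.1} (not \ref{L2.1b}) gives the second term. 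Finally, your last paragraph treats ``$m\ge n^\epsilon$ or $n_{\ell+1}\ll1/\epsilon$'' as something to be \emph{proved}; these are the hypotheses of the corollary. The complementary case $m<n^\epsilon$ and $n_{\ell+1}\ge 10/\epsilon$ is handled separately in the proof of Theorem~\ref{thm:close pairs}, not here.
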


\begin{proof}
Putting $$m^*=mp_{\ell+1}^{n_{\ell+1}}$$ we have $m^{*}> n^{2\epsilon}/2$ by the definition of $m$.
If $n/m=n^\delta$ then $\tau(n/m)\leq \tau(n)^\delta$.
To see this define $e_i$ to satisfy the equation $n_i+1=(p_i^{n_i})^{e_i}$, note that  the $e_i$ are ordered so that  $e_1\geq e_2\geq \ldots$. Then
$\tau(n/m)\ = \prod_{i=\ell+1}^k (p_i^{n_i})^{e_i} = (n/m)^{E}$ say, and $\tau(m)=m^F$, where $E\leq e_{\ell+1}\leq e_{\ell } \leq F$,   and the claim follows.

Now  $n/m^*\leq 2n^{1-2\epsilon}$ and so $\tau(n/m^*)\leq 2\tau(n)^{1-2\epsilon}$,
by the argument in the first paragraph. This implies
$$\tau(n/m)=\tau(n/m^*)(n_{\ell+1}+1) \ll \tau(n)^{1-2\epsilon}/\epsilon.$$

If $m\geq n^{ \epsilon}$ then $n/m  \leq n^{1-\epsilon}$, and so $\tau(n/m)\leq \tau(n)^{1-\epsilon}$, by the first paragraph.
Therefore, by Lemma \ref{L2.1},
\[
\begin{split}
&\#\{ (u,v)\in \Ec_{n}:\   |u-v|\leq n^{1/2-\epsilon}\}   =  \sum_{u\in \Ec_{n}} \#\{ v\in \Ec_{n}:\  |u-v|\leq n^{1/2-\epsilon}\}  \\
& \leq r_2(n) \max_{\theta\in \mathbb R/\mathbb Z}
\#\{ v\in \Ec_{n}:\  |v-\sqrt{n}e^{2i\pi \theta}|\leq \sqrt{ n/2m}\}   \leq  r_2(n)   \cdot 4\tau(n/m),
\end{split}
\]
and the result follows from the bounds on $\tau(n/m)$ given above.
\end{proof}

\begin{proof} [Proof of Theorem \ref{thm:close pairs}] Corollary \ref{C2.2} yields the result at once,
unless $m< n^\epsilon $ and $n_{\ell+1}\geq 10/\epsilon$. In this case $p_{\ell+1}^{n_{\ell+1}}> n^\epsilon/2$, else $ n^{2\epsilon}/2<m^*=mp_{\ell+1}^{n_{\ell+1}} \leq m n^\epsilon/2$, so that
$m\geq n^{ \epsilon}$. For ease of notation, we write $q=p_{\ell+1}$     with $Q=P_{\ell+1}$, and $N=n_{\ell+1}$, so that $m^*=mq^N$:

We let $d$ be the largest integer for which $$m^\dag:=mq^d\leq n^{2\epsilon}/2.$$
We now show that $\epsilon N/2<d< N$:
By the definition of $N$, we know that $m^*>n^{2\epsilon}/2$ and so $d<N$.
 Now $N\geq 10/\epsilon$, and $q^N\leq n$, so $q<n^{\epsilon/10}$.  By definition $ q^{d+1}>n^{2\epsilon}/2m>n^{ \epsilon}/2>q^{10}/2$, and so $d\geq 9$. Moreover $$(q^{d+1})^{1/\epsilon+1}>(n^{ \epsilon}/2)^{1/\epsilon+1} \geq n\geq q^N,$$ and so $d\geq \epsilon N/2$

For a given integer $d$, let $w$ be the smallest integer with $wd\geq N$.  If we have $w+1$ integers amongst $0,\ldots,N$ then two of them differ by at most $d$. We have $w\leq 2/\epsilon+1$ since $d>\epsilon N/2$.

We now prove that there are $\leq 4(w+1)\tau(n/m^*)$ numbers $\alpha\in \Ec_{n}$ with $$|\alpha-\sqrt{n}e^{2i\pi \theta}|<\sqrt{ n/2m^\dag}$$ for every $\theta\in \mathbb R/\mathbb Z$: \ For if not then we have  $\alpha,\alpha'$ with $e_i=e_i'$ for all $i>\ell+1$, and $u=u'$, but the exponents of $Q$ and $\overline{Q}$ are  $Q^e\overline{Q}^{n-e}$ and
$Q^{e+\Delta}\overline{Q}^{n-e-\Delta}$, for some $\Delta, 0\leq \Delta\leq d$. The contribution to $\gamma$ is therefore
$Q^e\overline{Q}^{n-e-\Delta}$ which has norm $q^{\frac{n-\Delta}2}\geq q^{\frac{n-d}2}$.
Therefore $|\gamma|\geq \sqrt{n/m^\dag}$. We recover the same contradiction as in Lemma  \ref{L2.1}.

Proceeding as in the proof of  Corollary \ref{C2.2}, and recalling from these that $\tau(n/m^*)\leq 2\tau(n)^{1-2\epsilon}\leq 2r_2(n)^{1-2\epsilon}$, we then deduce that
\[
\#\{ (u,v)\in \Ec_{n}:\ |u-v|\leq n^{1/2-\epsilon}\}  \leq     8(w+1)\tau(n)^{2-2\epsilon}  \ll \frac{1}{\epsilon}\cdot |\Ec_{n}|^{2-2\epsilon}.
\]
We have proved both Theorem \ref{thm:close pairs} and the claim \eqref{eq:close pairs eps=loglogr2/logr2}.
\end{proof}

\begin{remark} We can improve Theorem \ref{thm:close pairs} unconditionally to
\begin{equation} \label{Bd}
\#\{ (u,v)\in \Ec_{n}:\ |u-v|\leq n^{1/2-\epsilon}\}   \ll_\epsilon |\Ec_{n}|^{2-\tau \epsilon} ,
\end{equation}
for any fixed $\tau<4$, by choosing $\ell$ so that $m\leq n^{\tau \epsilon}$, and using Lemma \ref{L2.1b}
in place of  Lemma  \ref{L2.1} in the proof above.
\end{remark}

\subsection{On  conjectured bounds for lattice points in short arcs} In Conjecture 15 of \cite{CA} it is conjectured that for any fixed $\epsilon>0$, there are $\ll_\epsilon 1$ lattice points on an arc of length $  R^{1-\epsilon}$ of a circle of radius $R$, in which case the upper bound in Theorem \ref{thm:close pairs} would be $\ll  |\Ec_{n}|$, for any fixed $\epsilon>0$.

In the special case that $n=p^{g}$ is a prime power, we can use ideas of Diophantine approximation to lower bound  $ |\textrm{Im}((a+ib)^{g})| $ where $a^2+b^2=p$:\
Let $$f(t):=\frac 1{2i}((t+i)^{g}-(t-i)^{g})$$ so that $$\textrm{Im}((a+ib)^{g})= F(a,b)$$ where
$F(x,y):=y^{g}f(x/y)$ is a homogenous polynomial of degree $g$. Now $$2if'(t)=g((t+i)^{ g-1}-(t-i)^{g-1})$$ and so $(f,f')=1$. This implies that $f$ has no repeated roots, and that $F$ has no repeated factors.

Roth's Theorem gives that $|f(a/b)|\gg_{g,\eta} 1/|b|^{2+2\eta}$, for each fixed $g$,  which implies that
\begin{equation}\label{eq: LB4P}
\textrm{Im}(P^{g})\gg_{g,\epsilon} p^{g/2} /|b|^{2+2\epsilon}\geq p^{g/2-1-\epsilon}.
\end{equation}
We can obtain a uniform version of this result by using the
$abc$-conjecture in the field $\mathbb Z[i]$ (see \cite{GS}): Suppose that $a+b=c$ with $a,b,c$ coprime elements of $\mathbb Z[i]$. Then
\[
    \prod_{Q|abc} |Q|   \gg_\epsilon \max \{ |a|, |b|, |c|\}^{1-\epsilon} ,
\]
where the product runs over the distinct primes $Q$ in $\mathbb Z[i]$. We have the equation
\[ (a+bi)^{g} - (a-bi)^{g}= 2i \ \textrm{Im}(P^{g}) , \]
and the terms are coprime if $p>2$. The $abc$-conjecture then implies
\[
\begin{split}
\textrm{Im}(P^{g})  p&= \textrm{Im}(P^{g}) (a+bi) (a-bi) \geq   \prod_{Q|(a+bi)^{g}  (a-bi)^{g}\cdot \ \textrm{Im}(P^{g})} |Q| \\
& \gg_\epsilon ( |a+ib|^{g}) ^{1-\epsilon} = ( p^{g/2}) ^{1-\epsilon},
\end{split}
\]
and therefore we recover \eqref{eq: LB4P}, in which the implicit constant is independent of $g$.

\section{Pairs of close-by lattice points, over all radii $ \leq \sqrt{N}$}

\subsection{Reformulation, and automorphisms of pairs of close lattice points}
\label{sec:ref aut}

If $a, b \in \Ec_{n}$ and $|a-b| \leq M$ then let $\alpha=\text{gcd}(a,b) $ 
(determined up to a unit, cf. \eqref{eq:sum close pairs alph,beta} below)
in $\mathbb Z[i]$, and $\beta=a/\alpha$. Then
$b=u\alpha\overline{\beta}$ where $(\beta,\overline{\beta})=1$ so that $\beta$ is not divisible by any integer $>1$ and $u\in \CU$. Therefore the elements
of $S(n,M)$, defined as the set
\begin{equation*}
S(n,M):= \{  (\alpha,\beta,u)\in \mathbb Z[i]^2\times \CU:\  |\alpha| \cdot |\beta| =\sqrt{n}, |\alpha| \cdot  |\beta-u\overline{\beta}|\leq M,\ (\beta,\overline{\beta})=1            \},
\end{equation*}
are in 1-to-1 correspondence with the pairs
$$\{ (a,b)\in \Ec_{n}: |a-b|\leq M\}.$$

\vspace{3mm}

Our goal is to estimate the number of exceptions to $BR(\delta)$, for given $\delta>0$. More generally,
given $N$ and $M \le 2\sqrt{N}$ define the {\em set of close pairs}
\begin{equation}
\label{eq:Gc def}
\Gc(N;M) = \{(\lambda,\lambda'):\:  \|\lambda\|^{2}=\|\lambda'\|^{2} \le N, \, 0< \|\lambda-\lambda'\| < M\}.
\end{equation}
Define the function $I(c):[0,1]\rightarrow\R$ by
\begin{equation}
\label{eq:I(c) def}
I(c):= \frac{4}{\pi}\int\limits_{0}^{1}(1-ct^{2})^{1/2}dt.
\end{equation}
Note that $I(c)$ is decreasing from $I(0)=\frac{4}{\pi}$ to $I(1)=1$, as $c$ goes from $0$ to $1$.
Moreover
\begin{equation}
\label{eq:I(c)=I(0)+O(c)}
I(c)=\frac{4}{\pi}+O(c).
\end{equation}

\begin{theorem}
\label{thm:not BR seq asympt Andrew}
Let $N\rightarrow\infty$ be a large parameter, $M=M(N)$, and $\Gc(N;M)$ defined in \eqref{eq:Gc def}.
We have
\begin{equation}
\label{eq:not BR seq asympt Andrew}
\#\Gc(N;M) = 4\cdot I\left(\frac{M^{2}}{4N}\right)\cdot \sqrt{N}M\log{M}\cdot \left(1+O\left( \frac{1}{\log{M}}\right)\right),
\end{equation}
which is an asymptotic as long as $M\rightarrow\infty$.
\end{theorem}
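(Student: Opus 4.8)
The plan is to count pairs $(\lambda,\lambda')$ with $\|\lambda\|^2=\|\lambda'\|^2=n\le N$ and $0<\|\lambda-\lambda'\|<M$ by first fixing the difference vector $v=\lambda-\lambda'\in\Z^2\setminus\{0\}$ with $0<\|v\|<M$, and then counting the number of $\lambda\in\Z^2$ with $\|\lambda\|^2=\|\lambda-v\|^2\le N$. The condition $\|\lambda\|=\|\lambda-v\|$ forces $\lambda$ to lie on the perpendicular bisector of $0$ and $v$, i.e.\ on the line $\langle \lambda,v\rangle = \|v\|^2/2$; the additional constraint $\|\lambda\|^2\le N$ cuts out a segment of that line of length $2\sqrt{N-\|v\|^2/4}$. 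So $\#\Gc(N;M)$ is a sum over $v$ of the number of lattice points on a segment of a rational line, which by a standard argument equals (length)/(spacing of lattice points on the line) $+O(1)$, where the spacing on the line $\langle \lambda,v\rangle=c$ is $\|v\|/\gcd(v)$ when $c$ is an integer divisible appropriately. Writing $v=d\cdot w$ with $w$ primitive, the line through lattice points requires $d^2\|w\|^2/2\in\Z$, and the number of lattice points on the segment is $\frac{2\sqrt{N-\|v\|^2/4}}{\|w\|} + O(1) = \frac{2d\sqrt{N-\|v\|^2/4}}{\|v\|}+O(1)$, valid when the relevant parity/divisibility condition on $v$ holds (and zero otherwise).

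Next I would organize the sum by $\|v\|$: writing $\|v\|^2 = k$ and recalling $r_2(k)$ counts $v\in\Z^2$ with $\|v\|^2=k$, the main term becomes roughly
\[
\sum_{0<k<M^2} r_2^\flat(k)\cdot \frac{2\sqrt{k}\,\sqrt{N - k/4}}{k}\cdot(\text{divisor-type factor for }d),
\]
where $r_2^\flat$ incorporates the arithmetic condition selecting which $v$ actually support lattice points on the bisector. The key input is that on average $r_2(k)$ behaves like the constant $\pi$ (since $\sum_{k\le K} r_2(k)\sim \pi K$), and more precisely one needs an asymptotic for $\sum_{k\le K} r_2(k)/\sqrt k$-type sums — this is where the $\log M$ emerges: summing $\frac{1}{\sqrt k}\cdot \sqrt{k} = 1$ over $k$ up to $M^2$ would give $M^2$, but the correct weight after accounting for the line spacing and the $\sqrt{N-k/4}$ factor produces $\sqrt N\cdot M\cdot\log M$. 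Concretely, after passing to the variable $t$ with $\|v\|\approx tM$, the factor $\sqrt{N-\|v\|^2/4}=\sqrt N\sqrt{1-(M^2/4N)t^2}$ is what integrates against $dt$ to produce the constant $I(M^2/(4N)) = \frac4\pi\int_0^1(1-(M^2/4N)t^2)^{1/2}\,dt$, and the logarithm comes from the sum $\sum_{d} \frac{1}{d}$-type contribution (ranging $d$ over the "gcd" direction), or equivalently from $\sum_{k<M^2}\frac{1}{k}\cdot(\dots)$. I would make this rigorous by a dyadic decomposition in $\|v\|$, applying on each dyadic block the classical estimate for $\sum_{k\sim K}r_2(k)$ with the Gauss circle error term, and summing the blocks; the $O(1/\log M)$ error collects the circle-problem errors, the $O(1)$-per-line errors (there are $O(M^2)$ lines, contributing $O(M^2)$, which must be shown to be $O(\sqrt N M)$ — fine when $M=o(\sqrt N)$, and the boundary case handled separately), and the error from replacing the sum by an integral.

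The main obstacle I anticipate is the careful bookkeeping of the arithmetic condition determining which difference vectors $v$ actually admit lattice points $\lambda$ on the perpendicular bisector with $\|\lambda\|^2\in\Z$ — this is a congruence condition on $v$ modulo small numbers (essentially governing when $\|v\|^2/2$ lies in the appropriate coset), and it interacts with the $\gcd$ of $v$. Getting the constant exactly right (the precise $4$ and the appearance of $I(\cdot)$) requires tracking this local condition uniformly, and showing that on average over $v$ it contributes the clean density. A secondary difficulty is controlling the total contribution of the $O(1)$ error terms from each of the $\asymp M^2$ lines: this is acceptable precisely because $\sqrt N M\log M \gg M^2$ under $M=o(\sqrt N)$, so one must keep $M$ in the stated range or handle $M\asymp\sqrt N$ as a genuinely separate (and easier, since the count is then $O(N\log N)$) regime. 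I expect the proof, as indicated in the excerpt, to reduce these points to the reformulation via $S(n,M)$ and the automorphism-counting framework of the following section, so the real work is in setting up that correspondence cleanly and then executing the dyadic sum over $\|v\|$.
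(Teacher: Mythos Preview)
Your perpendicular-bisector approach is correct and will yield the theorem, but it is genuinely different from the paper's proof. The paper does \emph{not} organize the count by the difference vector $v=\lambda-\lambda'$; instead it uses the Gaussian-integer factorisation set up in \S\ref{sec:ref aut}: each close pair $(a,b)\in\Ec_n$ corresponds to $(\alpha,\beta,u)\in\Z[i]^2\times\CU$ with $a=\alpha\beta$, $b=u\alpha\overline\beta$, $(\beta,\overline\beta)=1$, and the condition $|a-b|\le M$ becomes $|\alpha|\,|\beta-u\overline\beta|\le M$. For $u=1$ and $\beta=x+iy$ this reads $|y|\le M/2|\alpha|$, $x^2\le N/|\alpha|^2-y^2$, with $(x,y)=1$ and $x+y$ odd; the count is then evaluated by summing first over $x$, then over $\alpha\in\Z[i]$ via the Gauss circle problem, and finally over $y$. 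The logarithm arises from $\sum_{y\le M/2}\frac{\varphi(2y)}{2y^2}\sim\frac{4}{\pi^2}\log M$, and the factor $I(M^2/4N)$ from the integral over $|\alpha|$. The other three units are handled analogously.

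By contrast, in your approach the logarithm comes from $\sum_{w\text{ primitive},\,\|w\|<M}\|w\|^{-2}$, and the parity/gcd bookkeeping you flag as the main obstacle is exactly the condition $2\gcd(v_1,v_2)\mid \|v\|^2$; carrying it through (splitting primitive $w$ into ``both odd'' versus ``one even'' and halving the $d$-sum in the latter case) one recovers the constant $4$ on the nose. Your route is more elementary---no Gaussian integers needed---but the paper's $(\alpha,\beta,u)$ parametrisation is not merely cosmetic: it is the natural setup for the finer Theorem~\ref{NumAutoClasses}, where one must count \emph{automorphism classes} of close pairs, and the multiplicative structure of $\Z[i]$ is essential there. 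So your method proves Theorem~\ref{thm:not BR seq asympt Andrew} cleanly but would not extend to the next section, whereas the paper's argument is doing double duty. Finally, your remark that the paper ``executes a dyadic sum over $\|v\|$'' is not accurate: no dyadic decomposition appears in the paper's proof.
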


The proof of Theorem \ref{thm:not BR seq asympt Andrew} will be given in section \ref{sec:not BR seq asympt Andrew}.

\begin{proof} [Proof of Theorem \ref{thm:BR exceptions} assuming Theorem \ref{thm:not BR seq asympt Andrew}]
Let $J=[\sqrt{\log N}]$ and select $\epsilon>0$ so that $$(1-\epsilon)^J=1/2.$$
We write $N'=(1-\epsilon)N$ with $L= CN^{1/2-\delta}$ and $L'= C(N')^{1/2-\delta}$. Trivially we have
\[
 \Gc(N;L')-\Gc(N';L') \leq B^*(N)-B^*(N') \leq \Gc(N;L)-\Gc(N';L).
\]
Substituting in the estimate from Theorem \ref{thm:not BR seq asympt Andrew} we obtain
\[
B^*(N)-B^*((1-\epsilon)N) = \frac{4C}{\pi} (1-2\delta)  N^{1-\delta}\log{N} \cdot \epsilon\left(1+O\left( \epsilon \right)\right) .
\]
Replacing $N$ by $(1-\epsilon)^jN$ for $j=0,1,2,\ldots,J-1$ and summing, we obtain
\[
B^*(N)-B^*(N/2) = \frac{4C}{\pi} (1-2\delta)  \cdot \frac{ N^{1-\delta}-(N/2)^{1-\delta}} { (1-\delta)\epsilon  (1+O(\epsilon  ))}  \log{N} \cdot \epsilon\left(1+O\left( \epsilon \right)\right) .
\]
Finally replacing $N$ by $ N/2^j$ for $j=0,1,2,\ldots $ and summing, we obtain the claimed result.
\end{proof}

\subsection{The number of close-by pairs}
\label{sec:not BR seq asympt Andrew}

\begin{proof} [Proof of Theorem \ref{thm:not BR seq asympt Andrew}] We will use the ``if and only if'' criterion above, so we wish to count
\begin{equation}
\label{eq:sum close pairs alph,beta}
\begin{split}
\Gc(N,M) &= \sum\limits_{n\le N}|S(n,M)| \\=  \frac{1}{4}\sum_{u\in \{ 1,i,-1,-i\} } \#\bigg\{ \alpha, \beta \in \mathbb Z[i]:\  &|\beta| \leq \frac{\sqrt{N}}{|\alpha|}, |\beta-u\overline{\beta}|\leq \frac{M}{|\alpha|}, \ \  (\beta,\overline{\beta})=1 \bigg\},
\end{split}
\end{equation}
since $\alpha$ is determined up to a unit.
If $u=1$ then $|\beta-u\overline{\beta}|=2 | \text{Im}(\beta)|$, so if we write $\beta=x+iy$ then the conditions are
\begin{equation}
\label{eq:cond x,y u=1}
 |y|\leq M/2|\alpha|\ \ \text{and} \ \ x^2\leq N/ |\alpha|^2-y^2, \ \text{with} \ \ (x,y)=1  \ \text{and} \ x+y \ \text{odd}.
\end{equation}
To count this we fix $y$ and vary over $x$. Now $x$ runs through an interval of length $X$, say.
Moreover $$x\equiv y+1 \pmod 2,$$ and $(x,y)=1$.
Hence, by the inclusion exclusion principle, given $y$, the number of $x$ satisfying \eqref{eq:cond x,y u=1}
is
$$\frac{\varphi(2y)}{2y}\cdot X + O(\tau(y)),$$ where $\tau(y)$ denotes
the number of squarefree divisors of $y$. Therefore, in total, the summand on the r.h.s. of \eqref{eq:sum close pairs alph,beta}
corresponding to $u=1$ equals
\begin{equation}
\label{eq:summand u=1 sum}
\begin{split}
&\#\bigg\{ \alpha, \beta \in \mathbb Z[i]:\  |\beta| \leq \frac{\sqrt{N}}{|\alpha|}, |\beta-\overline{\beta}|\leq \frac{M}{|\alpha|}, \ \  (\beta,\overline{\beta})=1 \bigg\} \\&= 4 \sum\limits_{|\alpha|\le M/2}\sum_{1\leq y \leq M/2|\alpha|} \frac{\varphi(2y)}{2y}    \cdot \left(\frac N{ |\alpha|^2}-y^2 \right)^{1/2} + O\left(  \sum\limits_{|\alpha|\le M/2}  \sum\limits_{|y|\leq M/2|\alpha|}   \tau(y) \right).
\end{split}
\end{equation}
The inner summation of the error terms on the r.h.s. of \eqref{eq:summand u=1 sum} is
\begin{equation*}
\sum_{y:\  |y|\leq M/2|\alpha|}   \tau(y)\ll \frac{M}{2|\alpha|} \cdot \log(M/2|\alpha|).
\end{equation*}
Now, the number of such $\alpha$ with
$\frac{M}{2^{k+1}}<|\alpha|\leq \frac{M}{2^k}$ is $\ll \frac{M^2}{2^{2k}}$, so our bound for the total error term in \eqref{eq:summand u=1 sum} is
\begin{equation*}
\sum\limits_{M/2^{k+1}<|\alpha|\leq M/2^k}\sum_{|y|\leq M/2|\alpha|}   \tau(y)
\ll \sum_{k\geq 1} \frac{M^2}{2^{2k}} \cdot 2^k k\ll M^2.
\end{equation*}

Substituting the latter into \eqref{eq:summand u=1 sum} it reads (this is the summand in
\eqref{eq:sum close pairs alph,beta} corresponding to $u=1$)
\begin{equation}
\label{eq:summand u=1 sum int + error}
\begin{split}
&\#\bigg\{ \alpha, \beta \in \mathbb Z[i]:\  |\beta| \leq \frac{\sqrt{N}}{|\alpha|}, |\beta-\overline{\beta}|\leq \frac{M}{|\alpha|}
, \ \  (\beta,\overline{\beta})=1 \bigg\}
\\&=  4\sum_{y:\  1\leq y \leq M/2 } \frac{\varphi(2y)}{2y}  \sum_{\substack{\alpha\in \mathbb Z[i] \\ |\alpha|\leq M/2y}}   \left(\frac N{ |\alpha|^2}-y^2 \right)^{1/2} + O(M^2).
\end{split}
\end{equation}
Now define $R(t):=\sum_{\alpha\in \mathbb Z[i], \ |\alpha|\leq T} 1 = \pi T^2+O(T)$, and use summation by parts to evaluate 
the inner sum on the r.h.s. of \eqref{eq:summand u=1 sum int + error}. We have
\begin{equation}
\label{eq:inn intg 1 + error}
\begin{split}
&\sum_{\substack{\alpha\in \mathbb Z[i] \\ |\alpha|\leq M/2y}}   \left(\frac N{ |\alpha|^2}-y^2 \right)^{1/2}=
\int_1^{M/2y}  \left(\frac N{ t^2}-y^2 \right)^{1/2} dR(t) \\&= 
\int_1^{M/2y}  \left(\frac N{ t^2}-y^2 \right)^{1/2} d(\pi t^{2}+O(t)) \\&
=2\pi N^{1/2}\int_1^{M/2y}  \left( 1-\frac{y^2t^{2}}{N} \right)^{1/2}dt + O (N^{1/2}\log(M/2y)),
\end{split}
\end{equation}
where the above formal treatment in the last equality in \eqref{eq:inn intg 1 + error}
hides applying summation by parts followed by integration by parts in ``opposite direction", 
noting that the boundary terms cancel each other upon the
sequential applications of the summation by parts, and the relevant summands are of the same sign (so that we can differentiate the error term). 
To evaluate the integral on the r.h.s. of \eqref{eq:inn intg 1 + error}
we transform the variables $Mv=2yt, $ so that 
\begin{equation*}
\int_{1}^{M/2y}  \left( 1-\frac{y^2t^{2}}{N} \right)^{1/2}dt = \frac{M}{2y}\int\limits_{2y/M}^{1}\left( 1-\frac{M^{2}}{4N}u^{2} \right)^{1/2}dv
=\frac{M}{2y}\left(I\left(\frac{M^{2}}{4N}\right)+O\left(\frac{y}{M}\right)\right),
\end{equation*}
upon extending the range of the integral and recalling the definition \eqref{eq:I(c) def} of $I(c)$.
Substituting the latter into \eqref{eq:inn intg 1 + error} yields
\begin{equation}
\label{eq:inn intg I(c) + error}
\sum_{\substack{\alpha\in \mathbb Z[i] \\ |\alpha|\leq M/2y}}   \left(\frac N{ |\alpha|^2}-y^2 \right)^{1/2} 
= \frac{\pi^{2} N^{1/2} M}{4y} I\left(\frac{M^{2}}{4N}\right) +O\left(N^{1/2} \left(\log \left(\frac{M}{2y}\right)+1\right)\right).
\end{equation}
We then find that the sum of the error terms on the r.h.s. of \eqref{eq:inn intg I(c) + error}
along the range of summation of \eqref{eq:summand u=1 sum int + error}
is bounded by
\begin{equation}
\label{eq:sum errors<<M^2logM+N^1/2M}
\ll N^{1/2}  \sum_{y:\  1\leq y \leq M/2} \log \left(\frac{M}{2y}\right) + N^{1/2}M \ll N^{1/2}  M
\end{equation}
by comparing the summation in \eqref{eq:sum errors<<M^2logM+N^1/2M} to the corresponding integral.

Now we substitute the estimate \eqref{eq:inn intg I(c) + error} into \eqref{eq:summand u=1 sum int + error},
and use the bound \eqref{eq:sum errors<<M^2logM+N^1/2M} for the relevant summation of the error terms to obtain
\begin{equation}
\label{eq:summand u=1=N^1/2Msum+errs}
\begin{split}
&\#\left\{ \alpha, \beta \in \mathbb Z[i]:\  |\beta| \leq \frac{\sqrt{N}}{|\alpha|}, |\beta-\overline{\beta}|\leq \frac{M}{|\alpha|}, \ \  (\beta,\overline{\beta})=1 \right\}
\\&=  \pi^{2} N^{1/2}M \cdot I\left(\frac{M^{2}}{4N}\right)\cdot \sum\limits_{y:\  1\leq y \leq M/2 } \frac{\varphi(2y)}{2y^{2}} +O(N^{1/2}M),
\end{split}
\end{equation}
where the error term $O(N^{1/2}M)$ also encapsulates $O(M^{2})$ from \eqref{eq:summand u=1 sum int + error}, as $M\le 2\sqrt{N}$.

For the main term on the r.h.s. of \eqref{eq:summand u=1=N^1/2Msum+errs} we need to determine
\begin{equation}
\label{eq:sum phi(2y)/2y^2 sums}
 \begin{split}
 \sum_{y:\  1\leq y \leq M/2 } \frac{\varphi(2y)}{2y^2} &=  \frac 12 \sum_{y:\  1\leq y \leq M/2 } \frac 1y  \sum_{\substack{ d|y \\ d \ \text{odd}}}
 \frac{\mu(d)} d = \frac 12 \sum_{\substack{ d\leq M/2 \\ d \ \text{odd}}}
 \frac{\mu(d)} d   \sum_{y  \leq M/2,\ d|y } \frac 1y    \\
 &= \frac 12 \sum_{\substack{ d\leq M/2 \\ d \ \text{odd}}} \frac{\mu(d)} {d^2}   \sum_{m \leq M/2d } \frac 1{ m}
 \\&= \frac 12 \sum_{\substack{ d\leq M/2 \\ d \ \text{odd}}} \frac{\mu(d)} {d^2}   \left( \log (M/2d) +\gamma + O(d/M) \right)\\
 &=  \frac 12 \sum_{\substack{ d\leq M/2 \\ d \ \text{odd}}} \frac{\mu(d)} {d^2}     \log (M/2d) +\   \frac \gamma2 \sum_{\substack{ d\leq M/2 \\ d \ \text{odd}}} \frac{\mu(d)} {d^2}    + O\left(   \frac {\log M}M     \right),
 \end{split}
\end{equation}
writing $y=dm$. Now
\begin{equation}
\label{eq:sum mu(d)/d^2=8/pi^2+O(1/M)}
 \sum_{\substack{ d\leq M/2 \\ d \ \text{odd}}} \frac{\mu(d)} {d^2} = \sum_{\substack{  d \ \text{odd},\geq 1}} \frac{\mu(d)} {d^2}  +O\left(  \sum_{ d> M/2 } \frac{1} {d^2}  \right) = \frac 8{\pi^2} +O(1/M).
\end{equation}
Also
\begin{equation}
\label{eq:sum mu(d)log(d)/d^2=8/Pi^2c}
\begin{split}
 \sum_{\substack{   d \ \text{odd}}} \frac{\mu(d)\log d} {d^2} &=
 \sum_{\substack{  d \ \text{odd}}} \frac{\mu(d)} {d^2}  \sum_{p|d} \log p =
 \sum_{p \ \text{odd}} \log p  \sum_{\substack{   d \ \text{odd}\\ p|d}} \frac{\mu(d)} {d^2}
 = - \frac 8{\pi^2} \sum_{p \ \text{odd}} \frac{ \log p}{p^2-1} .
 \end{split}
\end{equation}
Combining the estimates \eqref{eq:sum mu(d)/d^2=8/pi^2+O(1/M)} and \eqref{eq:sum mu(d)log(d)/d^2=8/Pi^2c} and inserting them into
\eqref{eq:sum phi(2y)/2y^2 sums} gives
\begin{equation}
\label{eq:sum phi(2y)/2y^2 asymp}
\begin{split}
 \sum_{y:\  1\leq y \leq M/2 } \frac{\varphi(2y)}{2y^2} &= \frac {4}{\pi^2} \left( \log M/2 + \gamma +
 \sum_{p \ \text{odd}} \frac{ \log p}{p^2-1}  \right)+ O\left(   \frac {\log M}M     \right) \\&= \frac {4}{\pi^2}\log{M} \cdot \left(1+O\left(\frac{1}{\log{M}}\right)\right),
\end{split}
\end{equation}
so that \eqref{eq:summand u=1=N^1/2Msum+errs} is
\begin{equation}
\label{eq:summand u=1 asymp}
\begin{split}
&\#\bigg\{ \alpha, \beta \in \mathbb Z[i]:\  |\beta| \leq \frac{\sqrt{N}}{|\alpha|}, |\beta-\overline{\beta}|\leq \frac{M}{|\alpha|}\bigg\}
\\&=  4N^{1/2}M\log{M} \cdot I\left(\frac{M^{2}}{4N}\right)\cdot \left(1+O\left(\frac{1}{\log{M}}\right)\right),
\end{split}
\end{equation}
where the error term in the latter estimate also encapsulates the one in \eqref{eq:summand u=1=N^1/2Msum+errs}.
The estimate \eqref{eq:summand u=1 asymp} means that the term in the sum on the r.h.s. of \eqref{eq:sum close pairs alph,beta}
corresponding to $u=1$ contributes $\frac{1}{4}$ of what is claimed in
the statement \eqref{eq:not BR seq asympt Andrew} of Theorem \ref{thm:not BR seq asympt Andrew}.

We claim that the contribution of each of the other three terms in \eqref{eq:sum close pairs alph,beta} is also
given by the r.h.s. of \eqref{eq:summand u=1 asymp}. While the proofs
are very similar we highlight the differences for the convenience of the reader.
The $u=-1$ term yields the conditions
\[
 |x|\leq M/2|\alpha|\ \ \text{and} \ \ y^2\leq N/ |\alpha|^2-x^2, \ \text{with} \ \ (x,y)=1  \ \text{and} \ x+y \ \text{odd};
\]
that is, the roles of $x$ and $y$ are reversed as compared to \eqref{eq:cond x,y u=1}; one then gets the same estimate.
If $u=i$ then $\beta-u\overline{\beta}=(1-i)(x-y)$, so let $y=x+\Delta$ so that $x^2+y^2\leq T^2$ becomes
$(2x+ \Delta)^2\leq 2T^2-\Delta^2$. Therefore we have the conditions, for $X=(2N/|\alpha|^2-\Delta^2)^{1/2}$
\[
\begin{split}
| \Delta|\leq M/\sqrt{2} |\alpha|\ \ &\text{and} \ \frac{-X- \Delta}2 \leq  x \leq
\frac{X- \Delta}2\\&\text{with} \ \ (x, \Delta)=1  \ \text{and} \  \Delta \ \text{odd}.
\end{split}
\]
We now have a slightly different calculation from before; we will note the differences:\ Again $x$ runs through an interval of length $X$, and so the number of such $x$ is $$\frac{\varphi(\Delta)}{\Delta} \cdot X+O(\tau(\Delta)).$$ Running through the calculation we get a main term of
\[
\sum_{\substack{ \Delta\leq M/\sqrt{2} \\  \Delta \ \text{odd}}}  \frac{\varphi(\Delta)}{\Delta^2} \cdot  \frac{\pi N^{1/2} M}{y}
\]
with the same error terms. An analogous calculation reveals that
\[
\sum_{\substack{ \Delta\leq M/\sqrt{2} \\  \Delta \ \text{odd}}}  \frac{\varphi(\Delta)}{\Delta^2}
= \frac 4{\pi^2} \left( \log \sqrt{2} M + \gamma + \sum_{p \ \text{odd}} \frac{ \log p}{p^2-1}  \right) +
O\left(   \frac {\log M}M     \right) .
\]
A similar calculation ensues for $u=-i$. Therefore, as mentioned above and similar to \eqref{eq:summand u=1 asymp}, each summand of \eqref{eq:sum close pairs alph,beta} contribute a quarter of the total claimed \eqref{eq:not BR seq asympt Andrew}, and the result follows.
\end{proof}


\subsection{Automorphisms of pairs of close lattice points}
\label{sec:NumAutoClasses}
We observe that if $ (\alpha,\beta,u)\in S(n,M)$ corresponds to $(a,b)\in \Ec_{n}$ then, taking conjugates,
\begin{equation}
\label{eq:aut conj S(n,M)}
(\overline{\alpha},\overline{\beta},\overline{u})\in S(n,M)
\end{equation}
corresponds to $(\overline{a},\overline{b})\in \Ec_{n}$. More interestingly, given $(\alpha,\beta,u)\in S(n,M)$ we see that
\begin{equation}
\label{eq:Aalbet def}
A(\alpha,\beta,u):= \{  (\alpha',\beta w,uw^2):\ \alpha'\in \mathbb Z[i] \ \text{with} \ |\alpha'|=|\alpha|\ \text{and } \ w\in \CU \}
\end{equation}
is a subset of $S(n,M)$. Hence we can partition $S(n,M)$ up into   sets
\begin{equation}
\label{eq:A*albet def}
A^*(\alpha,\beta,u) := A(\alpha,\beta,u) \cup A (\overline{\alpha},\overline{\beta},\overline{u}).
\end{equation}
How often $S(n,m)$ is equal to some unique $A^*(\alpha,\beta,u)$? We can re-formulate this question by letting
\[
\mathcal A(n,M):=\{ A^*(\alpha,\beta,u):\  (\alpha,\beta,u)\in S(n,M) \}
\]
and asking how often $|\mathcal A(n,M)|>1$.

\begin{theorem} \label{NumAutoClasses} Suppose that $M\leq 2\sqrt{N}$ and let
\begin{equation}
\label{eq:c=M^2/4N}
c=\frac{M^2}{4N}.
\end{equation}
\begin{enumerate}

\item
The number of distinct sets $A^*(\alpha,\beta,u)$ with $ |\alpha| \cdot |\beta| \leq \sqrt{N} $ is asymptotic to
\[
\sum_{n\leq N} |\mathcal A(n,M)| =
\frac{\kappa'}{2} \cdot I(c) \cdot MN^{1/2} \cdot
( (2\log M)^{1/2} +O(1)).
\]

\item
The number of pairs of distinct close-by pairs is
\begin{equation}
\label{eq:pairs A << N^1/2M^4/3log^6}
\sum_{n\leq N} \binom{|\mathcal A(n,M)|}2  \ll  N^{1/3}M^{4/3} (\log N)^6 +N^{1/2}  (\log N)^3.
\end{equation}

\end{enumerate}
\end{theorem}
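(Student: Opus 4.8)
The plan is to derive both parts from the ``if and only if'' parametrization of close pairs by triples $(\alpha,\beta,u)$, combined with the method proving Theorem~\ref{thm:not BR seq asympt Andrew}; part~(1) is a close variant of that argument, while part~(2) is a genuinely harder count for which the short-arc localization behind Lemmas~\ref{L2.1}--\ref{L2.1b} is the essential ingredient. For part~(1), observe that a class $A^*(\alpha,\beta,u)\in\mathcal A(n,M)$ is determined exactly by the norm $a:=|\alpha|^2$ together with the orbit of $(\beta,u)$ under the order-$8$ dihedral group generated by $(\beta,u)\mapsto(\beta w,uw^2)$ $(w\in\CU)$ and by conjugation. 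Since $(\beta,\overline\beta)=1$ and $\beta\ne u\overline\beta$ force $\beta\notin\CU$, every such orbit has size exactly $8$, the only exceptions (those with $\beta\in\CU$) requiring $a\le M^2/2$ and contributing $O(M^2/\sqrt{\log M})=O(N^{1/2}M)$ in total, which lies inside the claimed error term. Hence
\[
\sum_{n\le N}|\mathcal A(n,M)|=\tfrac18\sum_{u\in\CU}\#\Bigl\{(a,\beta):\ a\text{ a sum of two squares},\ |\beta|\le\tfrac{\sqrt N}{\sqrt a},\ 0<|\beta-u\overline\beta|\le\tfrac{M}{\sqrt a},\ (\beta,\overline\beta)=1\Bigr\}+O(N^{1/2}M),
\]
and the inner count is precisely the one in \eqref{eq:sum close pairs alph,beta} except that the sum over Gaussian integers $\alpha$ is replaced by a sum over admissible norms $a$. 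I would run the proof of Theorem~\ref{thm:not BR seq asympt Andrew} line by line, replacing $R(T)=\#\{\alpha\in\mathbb Z[i]:|\alpha|\le T\}=\pi T^2+O(T)$ everywhere by $\widetilde R(T):=\#\{a\le T^2:\ a\text{ a sum of two squares}\}=\kappa_{LR}T^2/\sqrt{2\log T}\,(1+O(1/\log T))$, i.e.\ Landau's estimate \eqref{eq:Landau M/sqrt(logM)}.

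The one new analytic feature in part~(1) is that the density now carries the slowly varying weight $1/\sqrt{2\log t}$, namely $d\widetilde R(t)\sim\kappa_{LR}\tfrac{2t}{\sqrt{2\log t}}\,dt$. Carrying out the summation-- and integration--by--parts of \eqref{eq:inn intg 1 + error}--\eqref{eq:inn intg I(c) + error}, with the same change of variables $Mv=2yt$ producing $I(M^2/4N)$ (via $\int_0^1(1-cv^2)^{1/2}\,dv=\tfrac\pi4 I(c)$), the inner sum becomes $\asymp\kappa_{LR}N^{1/2}M\,I(c)/(y\sqrt{2\log(M/2y)})$, and summing over $y$ reduces to
\[
\sum_{1\le y\le M/2}\frac{\varphi(2y)}{2y^2}\cdot\frac1{\sqrt{2\log(M/2y)}}\ \sim\ \frac4{\pi^2}\int_1^{M/2}\frac{dt}{t\sqrt{2\log(M/2t)}}\ =\ \frac4{\pi^2}(2\log M)^{1/2}+O\Bigl(\tfrac1{\sqrt{\log M}}\Bigr),
\]
the last equality coming from the substitution $v=\log(M/2t)$. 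The point needing care is that one must \emph{not} replace $\log(M/2y)$ by $\log M$ before evaluating this outer sum, as that alters the leading constant by a factor $2$ (it replaces $\int_0^L dv/\sqrt{2v}=\sqrt{2L}$ by $L/\sqrt{2L}=\tfrac12\sqrt{2L}$). Assembling constants, using $\kappa_{LR}=\tfrac\pi4\kappa'$, and observing as in the proof of Theorem~\ref{thm:not BR seq asympt Andrew} that the $u=-1$ and $u=\pm i$ terms contribute equally, gives part~(1).

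For part~(2), $\sum_{n\le N}\binom{|\mathcal A(n,M)|}2$ counts unordered pairs of inequivalent close pairs $(\lambda_1,\lambda_1'),(\lambda_2,\lambda_2')$ on a common circle $\Ec_n$, $n\le N$; in reduced form $\lambda_i=\alpha_i\beta_i$, $\lambda_i'=u_i\alpha_i\overline{\beta_i}$ one has $n=a_1m_1=a_2m_2$ with $a_i:=|\alpha_i|^2<M^2$ (as $|\beta_i-u_i\overline{\beta_i}|\ge\sqrt2$) and $m_i:=|\beta_i|^2$. Two localizations drive the estimate. First, $|\beta_i-u_i\overline{\beta_i}|\le M/\sqrt{a_i}$ means $\beta_i$ lies within relative arc--length $\ll M/\sqrt n$ of the line fixed by $z\mapsto u_i\overline z$ on the circle of radius $\sqrt{m_i}$ (the content of Lemmas~\ref{L2.1}--\ref{L2.1b}). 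Second, since $\lambda_1\overline{\lambda_2}-\lambda_1'\overline{\lambda_2'}=\lambda_1\overline{v_2}+v_1\overline{\lambda_2}-v_1\overline{v_2}$ has modulus $\ll M\sqrt n$ (with $v_i=\lambda_i-\lambda_i'$) and also equals $\alpha_1\overline{\alpha_2}\bigl(\beta_1\overline{\beta_2}-u_1\overline{u_2}\,\overline{\beta_1}\beta_2\bigr)$, the Gaussian integer $\gamma:=\beta_1\overline{\beta_2}$, of norm $\nu:=m_1m_2$, lies within relative arc--length $\ll M/\sqrt n$ of the line fixed by $z\mapsto u_1\overline{u_2}\,\overline z$. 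I would then parametrize a configuration by $a_1,a_2$ (sums of two squares, $<M^2$), the integer $\mu$ with $n=\operatorname{lcm}(a_1,a_2)\mu\le N$ (fixing $m_1,m_2,\nu$), the choice of $\gamma$ of norm $\nu$ in its short arc, and the choice of a divisor $\beta_1\mid\gamma$ of norm $m_1$ in \emph{its} short arc (which determines $\beta_2$, and $u_1,u_2$ up to $O(1)$); bounding the arc--counts by Cilleruelo--Cordoba~\cite{CC} (Lemma~\ref{L2.1b}) and the number of factorizations by divisor functions, then decomposing dyadically over the sizes of $a_1,a_2,\mu$ (each range costing a factor $\log N$, producing the $(\log N)^6$), should give $\ll N^{1/3}M^{4/3}(\log N)^6$ throughout the range where Cilleruelo--Cordoba is effective. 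The complementary ``degenerate'' range — where the arcs wrap a constant proportion of their circles and Cilleruelo--Cordoba is vacuous, essentially $n$ a bounded power of $M$ — must be handled separately, via the crude count of primitive Gaussian integers of given norm together with careful divisor sums, and is what yields the secondary term $N^{1/2}(\log N)^3$.

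The hard part is the generic case of part~(2). A ``one close pair at a time'' estimate — bounding the first pair by $\#\Gc(N;M)$ and the second by a per--circle local count — only yields a bound of size $\asymp N^{1/2}M^{1+o(1)}$, which already exceeds the claim once $M$ is an appreciable power of $\log N$ below $\sqrt N$; beating it forces one to exploit \emph{both} pairs simultaneously, precisely that $\lambda_1\overline{\lambda_1'}$ and $\lambda_2\overline{\lambda_2'}$ (equivalently, the $\beta_1\overline{\beta_2}$ above) are themselves a close pair, and feed this into the Cilleruelo--Cordoba mechanism. Making the divisor sums, the aggregation over $\gcd(a_1,a_2)$, and the arc--length thresholds fit together so that the exponents come out as $\tfrac13,\tfrac43$ rather than $\tfrac12,1$, and cleanly isolating the degenerate range so that it costs only $N^{1/2}(\log N)^3$, is where the real work lies.
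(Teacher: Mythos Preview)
Your treatment of part~(1) is essentially the paper's own: replace the Gaussian lattice count $R(t)$ by Landau's $S(t)$, keep the weight $1/\sqrt{2\log(M/2y)}$ through the outer sum over $y$, and the constants assemble to $\tfrac{\kappa'}{2}I(c)$. The caution about not replacing $\log(M/2y)$ by $\log M$ prematurely is exactly the point.

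For part~(2), your plan diverges from the paper's and omits the decisive mechanism. Your observation that $\beta_1\overline{\beta_2}$ lies in a short arc is correct and is one half of what the paper uses (the other half, that $\beta_1\beta_2$ also lies in a short arc, follows from your $\beta_i$-arcs by adding arguments). But the paper does \emph{not} invoke Cilleruelo--Cordoba here. Instead it introduces a finer Gaussian factorization: with $(\beta_1,\beta_2)=(\gamma)$ and $(\beta_1/\gamma,\overline{\beta_2/\gamma})=(\delta)$ one has $\beta_1=\gamma\delta\theta_1$, $\beta_2=\gamma\overline{\delta}\theta_2$ with the norms of $\theta_1,\theta_2$ coprime, and $\alpha_1=t_2v_1$, $\alpha_2=t_1v_2$ with $|t_j|=|\theta_j|$, $|v_1|=|v_2|$. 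Your short-arc information becomes $\arg(w'\gamma^2\theta_1\theta_2),\ \arg(w\delta^2\theta_1\overline{\theta_2})=O(M/\sqrt{N})$ for suitable $w,w'$, and the step you are missing is that \emph{inequivalence of the two $A^*$-classes forces both arguments to be nonzero}; since these are arguments of Gaussian integers, nonvanishing gives $|\gamma^2\theta_1\theta_2|,\ |\delta^2\theta_1\overline{\theta_2}|\gg\sqrt{N}/M$. These lower bounds, combined with $|\gamma\delta\theta_1\theta_2 v|\asymp\sqrt{N}$ and $|\theta_j v|\ll M$, feed a dyadic decomposition over $C,D,T_1,T_2,V$; the count of $\gamma,\delta$ in their narrow sectors uses only the elementary Lemma~\ref{Simple count} (primitive Gaussian integers of bounded modulus in a thin sector), not~\cite{CC}. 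The lower bounds force $AB\gg(\sqrt{N}/M)^{2/3}$ for the two largest of $C,D,T_1,T_2$, and this is precisely what produces the exponents $\tfrac13,\tfrac43$.

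Your identification of the secondary term is also off: the $N^{1/2}(\log N)^3$ does not come from a degenerate-arc range but from the separate, trivial treatment of $n$ that are squares or twice squares, via $\sum_{m^2\ll N}r_2(m^2)^2\ll N^{1/2}(\log N)^3$. Without the nonvanishing-of-argument step and the five-variable factorization, your divisor-and-arc scheme (fixing $a_1,a_2,\mu$, then $\gamma$, then a divisor $\beta_1\mid\gamma$) gives no obvious lower bound on the parameters carrying the sector constraint, and I do not see how it reaches $N^{1/3}M^{4/3}$ rather than the $N^{1/2}M^{1+o(1)}$ you yourself flag as the naive benchmark.
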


\begin{proof}[Proof of Theorem \ref{thm:G* asymp} assuming Theorem \ref{NumAutoClasses}]

If $m$ is a non-negative integer then the characteristic function
\begin{equation*}
\mathds{1}_{\geq 1}(m) = \begin{cases}
1 &m\ge 1\\
0 &m=0
\end{cases}
\end{equation*}
satisfies
\begin{equation}
\label{eq:1_>=1(m)=m+O(m choose 2)}
\mathds{1}_{\geq 1}(m) = m+O\left( \binom m2 \right)
\end{equation}
and
\begin{equation}
\label{eq:1_>=1(m)<=m}
\mathds{1}_{\geq 1}(m) \leq m.
\end{equation}
Now for a given $n\in S$, and $M>0$, there exists a pair $\lambda,\lambda'\in \Ec_{n}$ with $0<\|\lambda-\lambda'\|<M$,
if and only if $|\mathcal A(n,M)| \ge 1$. Hence, bearing in mind the definition \eqref{eq:G^*def}
of $\Gc^*(N;M)$, we have
\begin{equation}
\label{eq:Gc=sum 1_>=1Ac}
\Gc^*(N;M) = \sum\limits_{n \le N} \mathds{1}_{\geq 1}(|\mathcal A(n,M)|).
\end{equation}
Substituting \eqref{eq:1_>=1(m)=m+O(m choose 2)} into \eqref{eq:Gc=sum 1_>=1Ac} we 
obtain
\begin{equation*}
\begin{split}
\Gc^*(N;M) &= \sum\limits_{n \le N} |\mathcal A(n,M)| + O\left( \sum_{n\leq N} \binom{|\mathcal A(n,M)|}2  \right)
\\&=\frac{\kappa'}{2}\cdot I(c)\cdot MN^{1/2} ( (2\log M)^{1/2} +O(1)) \\&+ O\left(N^{1/3}M^{4/3} (\log N)^6 + N^{1/2}(\log{N})^{3}\right),
\end{split}
\end{equation*}
by both parts of Theorem \ref{NumAutoClasses}. The first part of Theorem \ref{thm:G* asymp}
finally follows from substituting \ref{eq:I(c)=I(0)+O(c)} into the latter estimate, recalling that here we
assumed \eqref{eq:assump M<=N^1/2/logN^17}, and noting
$$N^{1/3}M^{4/3}=\frac{MN^{1/2}}{(\sqrt{N}/M)^{1/3}}.$$

To prove the second part of Theorem \ref{thm:G* asymp} we substitute \eqref{eq:1_>=1(m)<=m} into \eqref{eq:Gc=sum 1_>=1Ac}
to yield
\begin{equation*}
\Gc^*(N;M) \le \sum\limits_{n \le N} |\mathcal A(n,M)| = \frac{\kappa'}{2}\cdot I(c)\cdot  MN^{1/2} ( (2\log M)^{1/2} +O(1)).
\end{equation*}
The desired result follows at once from the fact that $I(c)$ is decreasing on $[0,1]$, so that for every $c\in [0,1]$,
$I(c)\le I(0) = \frac{4}{\pi}$.

\end{proof}

\subsection{Proof of Theorem \ref{NumAutoClasses}, part I}

\begin{proof}
Write  $|\alpha|^2=a$ and $\beta=p+iq$; evidently $a\in S$.
In \eqref{eq:aut conj S(n,M)}, \eqref{eq:Aalbet def}, \eqref{eq:A*albet def}, we see that the set $\Ac(n,M)$ is designed to take care of an automorphism group (of pairs of close-by lattice points on the circle of radius  $\sqrt{n}$) of order $8$. Therefore
\begin{equation}
\label{eq:sum A(n,M) over U, a}
\begin{split}
&\sum_{n\leq N} |\mathcal A(n,M)| \\=\frac 18 \sum_{u\in \CU} \sum_{a\in S}    \# \bigg\{      (\beta,u)\in \mathbb Z[i]\times \CU:\
&|\beta| \leq \sqrt{N /a},\ \ \  |\beta-u\overline{\beta}|\leq \frac{M}{\sqrt{a}},\ (\beta,\overline \beta)=1  \bigg \},
\end{split}
\end{equation}
where, as before, $S$ denotes the set of integers that are sums of two squares. Let $\beta=x+iy$ so that $x+y$ is odd, and $(x,y)=1$.

In the case $u=1$ we have $|y|\leq M/ 2\sqrt{a} $ and then $x^2\leq N/a-y^2$. We will proceed analogously to
the proof of Theorem \ref{thm:not BR seq asympt Andrew}, but now we have, thanks to Landau \eqref{eq:Landau M/sqrt(logM)},
\begin{equation}
\label{eq:S(t) asympt}
S(t):= \sum_{n\in S,\ n\leq t} 1 = \kappa_{LR} \frac t{(\log t)^{1/2}} \left( 1 + O\left( \frac 1{\log t}\right)\right) ,
\end{equation}
so that the term on the r.h.s. of the summation in \eqref{eq:sum A(n,M) over U, a} corresponding to $u=1$ contributes
\begin{equation}
\label{eq:term u=1 sum ind}
\begin{split}
&\sum_{a\in S}    \# \bigg\{      (\beta,1)\in \mathbb Z[i]\times \CU:\
|\beta| \leq \sqrt{N /a},\ \ \  |\beta-\overline{\beta}|\leq \frac{M}{\sqrt{a}} \ \text{and} \ (\beta,\overline \beta)=1  \bigg \}
\\&=4 \sum_{y:\  1\leq y \leq M/2 } \frac{\varphi(2y)}{2y}  \sum_{\substack{a\leq (M/2y)^2 \\ a\in S}}
\left(\frac N{ a}-y^2 \right)^{1/2} + O\left( \frac{M^2}{(\log M)^{1/2}}  \right)
\\&= 4 \sum_{y:\  1\leq y \leq M/2-1} \frac{\varphi(2y)}{2y}  \sum_{\substack{a\leq (M/2y)^2 \\ a\in S}}
\left(\frac N{ a}-y^2 \right)^{1/2} +O(N)+ O\left( \frac{M^2}{(\log M)^{1/2}}  \right);
\end{split}
\end{equation}
here, to avoid vanishing denominator later, we separated the contribution of $y\le\frac{M}{2}-1$, using the trivial bound $O(N)$
to each of the summands with $y>\frac{M}{2}-1$, whose number is $O(1)$.
In this case the inner sum is more complicated as compared to \eqref{eq:inn intg 1 + error}:
we formally write (again hiding summation by parts followed by integration by parts in ``opposite direction", 
much in the spirit of \eqref{eq:inn intg 1 + error})
\begin{equation*}
\sum_{\substack{a\leq (M/2y)^2 \\ a\in S}}
 \left(\frac N{ a}-y^2 \right)^{1/2} = \int\limits_{1}^{(M/2y)^{2}} \left(\frac N{ t}-y^2 \right)^{1/2} dS(t)
\end{equation*}
via \eqref{eq:S(t) asympt} to yield
\begin{equation}
\label{eq:inner sum int parts ind}
\begin{split}
\sum_{\substack{a\leq (M/2y)^2 \\ a\in S}}
 &\left(\frac N{ a}-y^2 \right)^{1/2}\\=
\kappa_{LR} \int_1^{(M/2y)^2}  \left(\frac N{ t }-y^2 \right)^{1/2} &\frac{dt }{(\log t)^{1/2}}
+ O\left( \frac{MN^{1/2}}{ y(\log M/2y)^{3/2}}  \right)
\\=\frac{\kappa_{LR} MN^{1/2}}{y(2\log (M /2y))^{1/2} }  \int_{2y/M}^{1}   &\left(  1-\frac{M^2}{4N}v^2  \right)^{1/2}dv+ O\left( \frac{MN^{1/2}}{ y(\log M/2y)^{3/2}}   \right),
\end{split}
\end{equation}
and letting $t=(Mv/2y)^2$. 
Extending the range of the latter integral to $0$, we see that it equals
\begin{equation*}
\int_{2y/M}^{1}   \left(  1-\frac{M^2}{4N}v^2  \right)^{1/2}dv = \frac{\pi}{4}I(c) + O\left(\frac{y}{M}\right),
\end{equation*}
by the definition \eqref{eq:I(c) def} of $I(c)$, \eqref{eq:c=M^2/4N}, and the boundedness of the integrand.
We then have upon substituting the latter estimate into
\eqref{eq:inner sum int parts ind}, and then into \eqref{eq:term u=1 sum ind},
that the $u=1$ term in \eqref{eq:sum A(n,M) over U, a} contributes to the sum
\begin{equation}
\label{eq:sum A(n,M) over U, a as phi(2y)/2y^2}
\begin{split}
&\sum_{a\in S}    \# \bigg\{      (\beta,1)\in \mathbb Z[i]\times \CU:\
|\beta| \leq \sqrt{N /a},\ \ \  |\beta-\overline{\beta}|\leq \frac{M}{\sqrt{a}} \ \text{and} \ (\beta,\overline \beta)=1  \bigg \}
\\&= \pi\kappa_{LR} MN^{1/2} \cdot I(c) \sum_{y:\  1\leq y \leq M/2-1 } \frac{\varphi(2y)}{2y^2} \frac{1}{ (2\log (M /2y))^{1/2} } + E,
\end{split}
\end{equation}
where the error term $E=E(N,M)$ is bounded by
\begin{equation}
\label{eq:sum A(n,M) over U, a as phi(2y)/2y^2 err}
\begin{split}
|E| &\ll  \sum_{y:\  1\leq y \leq M/2 } \frac{\varphi(2y)}{2y}
\left(  \frac{MN^{1/2}}{ y(\log M/2y)^{3/2}}  +
\frac{  N^{1/2}}{(\log (M /2y))^{1/2} }   \right)
+  \frac{M^2}{(\log M)^{1/2}}   \\&\ll MN^{1/2} .
\end{split}
\end{equation}
To evaluate the main term we reuse our estimate
$$ P(t):=\sum_{y\leq t} \frac{\varphi(2y)}{2y^2}  = \frac {4}{\pi^2} \left(\log t+C+O\left(\frac{\log{t}}{t}\right)\right)$$ for some constant $C$ (cf. \eqref{eq:sum phi(2y)/2y^2 asymp}), and plan to use summation by parts followed by integration by parts,
again in the spirit of \eqref{eq:inn intg 1 + error}. 
And so we get, formally manipulating (assume for simplicity that $M/2\in \Z$, otherwise further restrict the range of integration),
\begin{equation}
\label{eq:clever sum parts phi(2y)/2y^2 1/log}
\begin{split}
\sum_{y:\  1\leq y \leq M/2-1 } \frac{\varphi(2y)}{2y^2} \frac{1}{ (2\log (M /2y))^{1/2} } &=
\int_{1}^{M/2-1 } \frac{dP(y)}{(2\log (M /2y))^{1/2}} \\&= \frac{4}{\pi^2}  \int\limits_{1}^{M/2-1 }\frac{d(\log y+C+O((\log y)/y))}{(2\log (M /2y))^{1/2}}
\\&= \frac{4}{\pi^2}  \int\limits_{1}^{M/2-1 } \frac{dy}{y(2\log (M /2y))^{1/2}} + E',
\end{split}
\end{equation}
where the error term is
\begin{equation}
\label{eq:error sum phi(2y)/2y^2 log}
|E'| \le \frac{\log y}{y(\log(M/2y))^{1/2}}\bigg|_{y=1}^{y=M/2-1} +  \int_{1}^{M/2-1 }\frac{\log{y}dy}{y^{2}(\log(M/2y))^{3/2}} \ll
\frac{\log{M}}{M^{1/2}}+ 1,
\end{equation}
by changing the variables $t=\frac{M}{2y}$ and separating the contribution of the range $y\in [1,\epsilon M]$ and $[\epsilon M , M/2]$.

We may then evaluate the latter integral in \eqref{eq:clever sum parts phi(2y)/2y^2 1/log} explicitly to be
\begin{equation*}
\int\limits_{1}^{M/2-1 } \frac{dy}{y(2\log (M /2y))^{1/2}} = (2\log(M))^{1/2}+O(1),
\end{equation*}
and, with the help of \eqref{eq:error sum phi(2y)/2y^2 log}, obtain
\begin{equation*}
\sum_{y:\  1\leq y \leq M/2-1 } \frac{\varphi(2y)}{2y^2} \frac{1}{ (2\log (M /2y))^{1/2} } = \frac{4}{\pi^{2}}(2\log{M}+O(1))^{1/2}.
\end{equation*}
Substituting the latter estimate into \eqref{eq:sum A(n,M) over U, a as phi(2y)/2y^2} and bearing in mind
\eqref{eq:sum A(n,M) over U, a as phi(2y)/2y^2 err} we finally obtain
\begin{equation*}
\begin{split}
&\sum_{a\in S}    \# \bigg\{      (\beta,1)\in \mathbb Z[i]\times \CU:\
|\beta| \leq \sqrt{N /a},\ \ \  |\beta-\overline{\beta}|\leq \frac{M}{\sqrt{a}} \ \text{and} \ (\beta,\overline \beta)=1  \bigg \}
\\&= \frac{4}{\pi}\kappa_{LR} MN^{1/2}\cdot I(c) \cdot (2\log{M}+O(1))^{1/2} ,
\end{split}
\end{equation*}
which contributes a quarter in \eqref{eq:sum A(n,M) over U, a} of what is stated in part I of Theorem \ref{NumAutoClasses}.
We get a similar quantity for $u=-1$; and by suitably modifying the proof, we get the same quantity for $u=i$ and $u=-i$, modifying the proof much like we did in Theorem \ref{thm:not BR seq asympt Andrew}.
\end{proof}

\subsection{Many pairs - proof of Theorem \ref{NumAutoClasses}, part II}

\begin{lemma} \label{Simple count}
For any $\theta$ we have, uniformly,
\[
\begin{split}
\# &\{ x+iy \in \mathbb Z[i]:\ |x+iy|\leq N, \ (x,y)=1\ \& \ | \text{\rm arg}(x+iy)-\theta | <\epsilon \}
\ll 1+ \epsilon N^2.
\end{split}
\]
In fact if $\epsilon\leq 1/2N^2$ there is no more than one solution.
\end{lemma}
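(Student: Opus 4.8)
The plan is to count lattice points $x+iy \in \mathbb{Z}[i]$ lying in the intersection of the disc $\{|z| \le N\}$ with the angular sector $\{|\arg(z) - \theta| < \epsilon\}$, and to show this count is $\ll 1 + \epsilon N^2$ even without the coprimality constraint (which only reduces the count). The region in question is a ``pie slice'' of opening angle $2\epsilon$ and radius $N$, whose area is $\epsilon N^2$. The main point is that a thin sector, unlike a general convex region, is controlled: its intersection with any dyadic annulus $T \le |z| < 2T$ (for $T \le N$) is an arc-like box of radial extent $\asymp T$ and angular extent $2\epsilon$, hence of diameter $\ll T$ in the radial direction and $\ll \epsilon T$ in the transverse direction, so it contains $\ll (1 + \epsilon T)(1 + T) \ll 1 + T + \epsilon T^2$ lattice points by a standard covering-by-unit-squares argument. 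Summing over the $O(\log N)$ dyadic scales would, however, lose a logarithm, so instead I would argue more carefully.

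The clean approach: first dispose of the innermost disc $|z| \le 10$, which contributes $O(1)$ points unconditionally. For $|z| > 10$, fix the annulus $T \le |z| < 2T$ with $T \ge 5$; the sector $\cap$ annulus sits inside a rectangle of dimensions $\asymp T$ (radial) by $\asymp \epsilon T$ (transverse, since the arc of a circle of radius $\le 2T$ subtending angle $2\epsilon$ has length $\le 4\epsilon T$), to which we add a bounded collar. A box of side lengths $a \times b$ contains $\ll (a+1)(b+1)$ lattice points, giving $\ll (T+1)(\epsilon T + 1) \ll T + \epsilon T^2$ here (using $T \ge 5$). Now sum over $T = 2^j$, $j \ge 3$, up to $2^j \le N$: the $\epsilon T^2$ terms form a geometric series dominated by their largest term $\ll \epsilon N^2$, and the $T$ terms likewise sum to $\ll N$. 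This gives a bound $\ll 1 + N + \epsilon N^2$, which is not quite what is claimed. To remove the spurious $N$: observe that the set of lattice points with $|\arg(x+iy) - \theta| < \epsilon$ and $|x+iy| > 10$, when $\epsilon$ is small, forces $x$ and $y$ to lie in a prescribed ratio to within $O(\epsilon)$; the lattice points near a fixed ray through the origin are themselves sparse. Concretely, along the ray, two lattice points $z_1, z_2$ in the sector with $|z_1| < |z_2|$ satisfy $|\arg z_1 - \arg z_2| < 2\epsilon$, so $z_2$ lies within angular distance $2\epsilon$ of the ray $0 z_1$; writing $z_2 = z_1 + w$, the imaginary part of $w/z_1$ is $\ll \epsilon$, i.e. $|\operatorname{Im}(\bar{z_1} z_2)| \ll \epsilon |z_1|^2 \ll \epsilon N^2$. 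Since $\operatorname{Im}(\bar{z_1} z_2) \in \mathbb{Z}$, either it is $0$ (forcing $z_1, z_2$ collinear with the origin, i.e. real multiples of a common primitive vector — and with $(x,y)=1$ imposed, at most two such primitive directions occur in a sector of angle $< \pi$, contributing $O(1)$ after coprimality is used) or $|\operatorname{Im}(\bar{z_1} z_2)| \ge 1$, which already needs $\epsilon N^2 \gg 1$. This dichotomy shows that when $\epsilon N^2 < c$ for a small absolute constant $c$ the only points are the $O(1)$ near the origin plus the collinear family, and the coprimality condition then leaves no more than one solution; when $\epsilon N^2 \ge c$ the crude bound $\ll 1 + N + \epsilon N^2 \ll \epsilon N^2$ is already of the claimed shape since $N \ll (\epsilon N^2)/(\epsilon N) $ needs handling — so in that regime I would instead just note $N \le \sqrt{\epsilon N^2 / \epsilon} $ is useless and fall back on: $N \le \epsilon N^2$ fails when $\epsilon < 1/N$, but then $\epsilon N^2 < N$ and we genuinely need the sparseness argument.

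Cleaning this up, the logical skeleton I would actually write is: (i) points with $|z| \le C_0$ contribute $O(1)$; (ii) for the rest, if $z, z'$ both lie in the sector then $|\operatorname{Im}(\bar z z')| \le 2\epsilon |z||z'| \le 2\epsilon N^2$, and this quantity is a rational integer, so the number of \emph{distinct values} of $\operatorname{Im}(\bar z z')$ as $z'$ ranges over sector points (with $z$ the sector point of smallest modulus) is $\ll 1 + \epsilon N^2$; (iii) for each fixed value $v = \operatorname{Im}(\bar z z')$ the point $z'$ lies on a line, and on that line the sector constraint plus $|z'| \le N$ confines $z'$ to a segment of length $\ll$ (something), but combined with $(x',y')=1$ — here is where coprimality does real work — primitive lattice points on a single line through a translate are spaced by the primitive generator, and the line is nearly radial so only $O(1)$ of them meet the bounded-angle sector; hence $\ll 1$ points per value of $v$, for a total $\ll 1 + \epsilon N^2$. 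The ``in fact'' clause falls out of (ii)–(iii): if $\epsilon \le 1/(2N^2)$ then $2\epsilon N^2 \le 1$ so $\operatorname{Im}(\bar z z') = 0$ for all sector points, meaning every sector point is a real multiple of a single primitive vector $z_0$; the coprimality condition $(x,y)=1$ then forces $z \in \{z_0, -z_0\}$, and only one of these has $\arg$ within $\epsilon < \pi/2$ of $\theta$, so there is at most one solution.

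\textbf{Main obstacle.} The delicate part is step (iii): extracting genuine sparseness from the hypothesis $(x,y)=1$ on a near-radial line, and in particular arguing cleanly that a line $\operatorname{Im}(\bar z \cdot) = v$ meets the sector $\{|\arg - \theta| < \epsilon, |{\cdot}| \le N\}$ in $O(1)$ \emph{primitive} lattice points uniformly in $v$, $\theta$, $N$, $\epsilon$. One must be careful that the line may pass close to the origin (small $|z|$ was already excised, which helps) and that the implied constants do not secretly depend on $\epsilon$; the reduction to ``primitive points on a fixed line are an arithmetic progression, of which at most $O(1)$ lie in a sector of half-angle $< \pi/2$ that avoids a neighbourhood of the origin'' is the crux, and writing it so the $O(1)$ is truly absolute is where the care goes. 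Everything else — the dyadic area bound and the integrality of $\operatorname{Im}(\bar z z')$ — is routine.
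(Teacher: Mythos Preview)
Your core idea --- using the integer $\operatorname{Im}(\bar z z') = ay - bx$ to parametrize the sector points relative to a fixed solution $z = a+ib$ --- is exactly the paper's approach. But your step (iii) is false as stated, and this is a genuine gap rather than just a matter of care. The line $\operatorname{Im}(\bar z \cdot) = v$, written as $ay - bx = v$, has lattice points forming the arithmetic progression $(x_0 + ka,\, y_0 + kb)$, $k\in\Z$; its intersection with the box $|x|,|y|\le N$ contains $\asymp N/|z|$ lattice points, and a positive proportion of these will be primitive. Coprimality does \emph{not} cut this down to $O(1)$: ``primitive lattice points on a line are spaced by the primitive generator'' is a confusion between primitivity of the direction vector and primitivity of individual points.

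The fix is not to sharpen the per-line count but to sharpen the count of lines. You bounded $|\operatorname{Im}(\bar z z')| \le 2\epsilon N^2$ using $|z|\le N$, but in fact $|\operatorname{Im}(\bar z z')| = |z|\,|z'|\,|\sin(\arg z' - \arg z)| \le 2\epsilon\, |z|\, N$, so there are only $\ll 1 + \epsilon |z| N$ values of $v$. Pairing this with the honest per-line count $\ll 1 + N/|z|$ gives
\[
(1 + \epsilon |z| N)\left(1 + \frac{N}{|z|}\right) \ll 1 + \epsilon N^2 + \frac{N}{|z|} + \epsilon |z| N \ll 1 + \epsilon N^2 + \frac{N}{|z|},
\]
and the extraneous $N/|z|$ term is absorbed by choosing $z$ to be any solution (its contribution is the single line $v=0$, on which primitivity \emph{does} reduce to $\{z,-z\}$). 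This is precisely what the paper does: fix one coprime solution $(a,b)$, bound $|ay-bx|\ll \epsilon a N$, and count $\ll N/a$ points per value, for a product $\ll \epsilon N^2$. Your ``in fact'' clause for $\epsilon \le 1/(2N^2)$ is correct and matches the paper.
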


The proof of Lemma \ref{Simple count} will be given immediately after the proof of Theorem \ref{NumAutoClasses}, part II.

\begin{proof} [Proof of Theorem \ref{NumAutoClasses}, part II assuming Lemma \ref{Simple count}] We treat separately those
$n\asymp N$, which are either a square nor twice a square. These contribute at most
\[
\sum_{m^2 \asymp N} r_2(m^2)^2+\sum_{2m^2 \asymp N}r_2(2m^2)^2 \ll N^{1/2} (\log N)^3.
\]

Now suppose we have two pairs $a_1,b_1$ and $a_2,b_2$ not belonging to the same class $A^{*}(\alpha,\beta,u)$, but with the same $n\asymp N$, which is neither a square nor twice a square.
We use the identification  between the close-by pairs and triples $(\alpha,\beta,u)\in S(n,M)$
as in the beginning of section \ref{sec:ref aut}, so that a couple of close-by pairs yields two triples $(\alpha_{j},\beta_{j},u_{j}) \in S(n,M)$,
$j=1,2$.
It is then possible to write
\begin{equation}
\label{eq:betaj=gam del tj}
\beta_1= \gamma \delta \theta_1 \ \ \text{and} \ \  \beta_2= \gamma \overline{\delta} \theta_2
\end{equation}
where $(\beta_1, \beta_2)=(\gamma)$ and $(\beta_1/\gamma,\overline{ \beta_2/\gamma})=(\delta)$,
and the norms of $\theta_1$ and $\theta_2$ are coprime. Since $|\alpha_{1}\beta_{1}| = |\alpha_{2}\beta_{2}| = \sqrt{n}$, we have
\begin{equation}
\label{eq:alphaj=tjvj}
\alpha_1= t_2v_1 \ \ \text{and} \ \  \alpha_2= t_1v_2,
\end{equation}
where
\begin{equation}
\label{eq:tj=|tj|,|v1|=|v2|}
|t_1|=| \theta_1|,\ |t_2|=| \theta_2| \text{ and } |v_1|=|v_2|.
\end{equation}
Hence we must have
\begin{equation}
\label{eq:gam del t1 t2 v1<>N^1/2}
|\gamma \delta \theta_1  \theta_2 v_1| = |\beta_{1}\alpha_{1}| = \sqrt{n}\asymp \sqrt{N},
\end{equation}
and we in addition have
\begin{equation}
\label{eq:theta1v1,theta2v1<=M}
\ |  \theta_1  v_1| = |\alpha_{2}| = \frac{M}{|\beta-u\overline{\beta}|} \le M ,\, \text{and } |\theta_2  v_1|=|\alpha_{1}| \le M
\end{equation}
in a similar fashion.
Substituting the estimates \eqref{eq:theta1v1,theta2v1<=M} into \eqref{eq:gam del t1 t2 v1<>N^1/2} we conclude
that
\begin{equation}
\label{eq:gamdelt1,gamdelt1>>N^1/2/M}
|\gamma \delta \theta_1 |,\ |\gamma \delta \theta_2|  \gg  \frac{\sqrt{N}}{M}  .
\end{equation}

Next we use the condition that
\begin{equation*}
|\alpha_{j}|\cdot |\beta_{j}-u_{j}\overline{\beta}|\le M.
\end{equation*}
Since $|\alpha_{j}|\cdot |\beta_{j}| = \sqrt{n}\asymp  \sqrt{N}$, this yields
\begin{equation*}
|1-\exp(-2i\cdot \text{arg}(\beta_{j}))| = \left|1-u_{j}\frac{\overline{\beta_{j}}}{\beta_{j}}\right|\ll \frac{M}{\sqrt{N}}.
\end{equation*}
Recalling that $u_{j}$ are units, this implies that
$2\, \text{arg}(\beta_{j})$ are small modulo $\frac{\pi}{2}$, or, more precisely
$$|\text{arg}(\beta_j)| \ll \frac{M}{\sqrt{N}} \mod \frac{\pi}{4}.$$
Therefore, bearing in mind \eqref{eq:betaj=gam del tj}, we have
\begin{align*}
 \arg(\theta_1)  = - \arg \gamma - \arg \delta  &\pmod{\pi/4} \ \  +O( M/\sqrt{N}) \\
 \arg(\theta_2) = - \arg \gamma + \arg \delta  &\pmod{\pi/4} \ \ +O(  M/\sqrt{N})
\end{align*}
We have two linearly independent equations in four unknowns. This allows us to determine
$\arg \gamma, \ \arg \delta$ in terms of $\arg(\theta_1), \ \arg(\theta_2)$ mod $\pi/4$, with error $O(M/\sqrt{N})$:
\begin{equation}
\label{eq:2arg(gam)+arg(t1)+arg(t2) small}
\begin{split}
2\arg \gamma +(\arg(\theta_1)+\arg(\theta_2)) &\pmod{\pi/4} \ \ = O( M/\sqrt{N}) \\
2\arg \delta + (\arg(\theta_1)-\arg(\theta_2)) &\pmod{\pi/4} \ \ = O( M/\sqrt{N}).
\end{split}
\end{equation}

\vspace{2mm}

The inequalities \eqref{eq:2arg(gam)+arg(t1)+arg(t2) small} imply that there exist $w,w'\in \pm \{ 1,i,1+i,1-i\}$ such that
\begin{equation}
\label{eq:arg small}
\arg(w\delta^2\theta_1\overline{\theta_2}),\arg(w'\gamma^2\theta_1\theta_2) = O\left( \frac{M}{\sqrt{N}}\right).
\end{equation}
We argue that neither of the numbers on the l.h.s. of \eqref{eq:arg small} can vanish precisely:\
For if we suppose that $\arg(w\delta^2\theta_1\overline{\theta_2}) =0$,  then, as  $\theta_1$ and $\theta_2$ are coprime, we can write
$\theta_j=r_j\phi_j ^2\omega_j$ for $j=1,2$ with $\delta = \overline{\phi_1}\phi_2 $, where each $\omega_i$ divides $\omega$, and the $r_j$ are integers.  Now $(r_j \omega_j)$ divides $(\theta_j,\overline{\theta_j})$, which divides $(\beta_j,\overline{\beta_j})=1$, and so
$r_j \omega_j$ is a unit. Moreover $$\phi_1\phi_2|(\delta\theta_{1},\overline{\delta}\theta_{2})=
(\beta_1/\gamma, \beta_2/\gamma)=(1),$$ and so $\phi_1=\phi_2=1$, and therefore
$\delta=\overline{\phi_1}\phi_2=1$.   Therefore  $\theta_1,\theta_2$ are units.
We may then select $\gamma$ so that $\theta_2=1$,
and so $\beta_2=w \beta_1$ for some unit $w$, and $|\alpha_1|=| \alpha_2|$. Therefore
$(\alpha_2,\beta_2,u_2)\in A(\alpha_1,\beta_1,u_1)$ in contradiction to our assumption that these triples belong to different
classes  $A^{*}(\alpha,\beta,u)$. Similarly if $\arg(w'\gamma^2\theta_1\theta_2) =0$ then we take the conjugate of $(\alpha_2,\beta_2)$, so that the roles of $\gamma$ and $\delta$ are exchanged, and so
$(\alpha_2,\beta_2,u_2)\in  A (\overline{\alpha_1},\overline{\beta_1},\overline{u_1})$. In either case we get a contradiction
to our assumption that $(\alpha_{j},\beta_{j},u_{j})$, $j=1,2$ belong to different classes $A^{*}(\alpha,\beta,u)$.

Therefore we may assume $$0\ne \arg(w\delta^2\theta_1\overline{\theta_2}) = O\left( \frac{M}{\sqrt{N}}\right) ,$$ and so
\[
1\leq |\text{Im}(w\delta^2\theta_1\overline{\theta_2})| \asymp |\arg(w\delta^2\theta_1\overline{\theta_2})| \cdot | \delta^2\theta_1\overline{\theta_2}|
\ll | \delta^2\theta_1\overline{\theta_2}| \cdot \frac{ M}{\sqrt{N}},
\]
and so we deduce that
\begin{equation}
\label{eq:gam^2t1t2,del^2t1t2>>N^1/2/M}
|\gamma^2\theta_1\theta_2|  \gg \frac{\sqrt{N}}{M}, \ \ \text{and  } \ \ |\delta^2\theta_1\overline{\theta_2}|  \gg  \frac{\sqrt{N}}{M}.
\end{equation}

\vspace{2mm}

To summarize all the above, a pair of triples $(\alpha_{j},\beta_{j},u_{j})$ that satisfy the conditions above and belong to different classes $A^{*}(\cdot,\cdot,\cdot)$ determines an $8$-tuple $(\gamma,\delta,\theta_{1},\theta_{2},t_{1},t_{2},v_{1},v_{2})\in \Z[i]^{8}$
that satisfies \eqref{eq:tj=|tj|,|v1|=|v2|}, \eqref{eq:gam del t1 t2 v1<>N^1/2}, \eqref{eq:theta1v1,theta2v1<=M}, \eqref{eq:gamdelt1,gamdelt1>>N^1/2/M} and \eqref{eq:gam^2t1t2,del^2t1t2>>N^1/2/M}. Conversely, such a $8$-tuple corresponds to
a unique $(\alpha_{j},\beta_{j})$ via \eqref{eq:betaj=gam del tj} and \eqref{eq:alphaj=tjvj},
hence, for our purposes it is sufficient to bound their number (bearing in mind that the group of units is finite).
Let $C,D,T_1,T_2,V$ be powers of $2$ that are greater than and closest to
$|\gamma|,\,|\delta|,\,|\theta_{1}|=|t_{1}|,|\theta_{2}|=|t_{2}|$, and $|v_{1}|=|v_{2}|$ respectively;
hence their product is $ CDT_{1}T_{2}V\asymp \sqrt{N}$ with $T_1V, T_2V \ll M$, and
\begin{equation} \label{LowerBds}
CDT_1,\ CDT_2,\ C^2T_1T_2,\ D^2T_1T_2 \gg  \sqrt{N}/M.
\end{equation}
We note that $\sum_{n\leq N} r_2(n)^2\asymp N \log N$.
First select an integer $v \asymp V^2$ and any $v_1,\, v_2$ with $|v_{1}|^{2}=|v_{2}|^{2}=v$, so the total number of possible choices for $v_{1},v_{2}$ is $$\leq \sum_{v\ll V^2} r_2(v)^2\ll V^2 \log V\ll V^2 \log M.$$

\vspace{2mm}

Let $A$ be the largest of $C,D,T_1,T_2$, and $B$ be the second largest. By \eqref{LowerBds}, we have
\begin{equation}
\label{eq:AB>>,B>>}
AB\gg \left(\frac{\sqrt{N}}{M}\right)^{2/3}\text{ and }\; B\gg \left(\frac{\sqrt{N}}{M}\right)^{1/4}.
\end{equation}
If the two largest are, say, $C$ and $D$, then we select
any $\theta_1, t_1$ with $|\theta_{1}|=|t_{1}|^{2}=r \asymp   T_1^2$, and $\theta_2, t_2$ with $|\theta_{2}|=|t_{2}|=s\asymp T_2^2$,
so the number of choices of these are
$$\ll T_1^{2}\log T_1 \cdot   T_2^{2}\log T_2\ll (T_1T_2 \log M)^2.$$
Next we select $\gamma$ with $|\gamma|\asymp C$, and
$\delta$ with $|\delta|\asymp D$, with their arguments in the narrow intervals, of width
$O(M/\sqrt{N})$,  given by the equations \eqref{eq:2arg(gam)+arg(t1)+arg(t2) small} above.
Then, by Lemma \ref{Simple count}, the number of such $\gamma$ is $\ll  1+C^2\frac{M}{\sqrt{N}}$, and the number of such $\delta$ is
$\ll  1+D^2\frac{M}{\sqrt{N}}$. Hence, the ordering assumptions on $C,D,T_{1},T_{2},V$ above, the total number of possible
$(\gamma,\delta,\theta_{1},\theta_{2},t_{1},t_{2},v_{1},v_{2})$ (and hence the corresponding $(\alpha_{j},\beta_{j})$, $j=1,2$)
is
\begin{equation*}
\begin{split}
&\ll (V^{2}\log{M}) \cdot (T_{1}T_{2}\log{M})^{2}\cdot \left(1+C^2\frac{M}{\sqrt{N}} \right)\cdot \left( 1+D^2\frac{M}{\sqrt{N}}\right)
\\&= (CDT_{1}T_{2}V)^{2} (\log{M})^{3} \cdot \left(\frac{1}{C^{2}}+\frac{M}{\sqrt{N}} \right)\cdot \left( \frac{1}{D^{2}}+\frac{M}{\sqrt{N}}\right) \\& \ll    N   \left(\frac{1}{A^2}+ \frac{M}{\sqrt{N}}\right)  \cdot \left(\frac{1}{B^2}+\frac{M}{\sqrt{N}}\right) \cdot
(\log M)^{3},
\end{split}
\end{equation*}
and the analogous expression is proved for every ordering of $C,D,T_1,T_2$.
Thanks to \eqref{eq:AB>>,B>>}, each of these expressions is
$\ll  N^{1/3}M^{4/3} (\log N)^3$, and, since the numbers $C,D,T_{1},T_{2}$, whose product is $\ll N$, are all powers of $2$,
the number of possibilities for
choosing them is $\ll (\log N)^4$, which implies a slightly weaker bound
\begin{equation*}
\sum_{n\leq N} \binom{|\mathcal A(n,M)|}2 \ll N^{1/3}M^{4/3} (\log N)^7
\end{equation*}
as compared to the claimed result \eqref{eq:pairs A << N^1/2M^4/3log^6}.
To improve our analysis in order to match \eqref{eq:pairs A << N^1/2M^4/3log^6} we note that to achieve our bound we fix $AB$ close to
$(\sqrt{N}/M)^{2/3}$, which reduces the number of possibilities for $C,D,T_1,T_2$ to $\ll (\log N)^3$.
\end{proof}

\begin{remark}
There are $\asymp M^2(\log M)^7$ solutions with $C,D,T_1,T_2 \gg (\sqrt{N}/M)^{1/2}$, and we expect this to be the correct number of solutions, provided that $M>N^c$ for some $c>0$.
\end{remark}

\begin{proof}[Proof of Lemma \ref{Simple count}]
Multiplying through by the units we can place $\theta$ in the first quadrant; swapping $x$ and $y$ if necessary we may assume that $0<y<x<N$.
Since the derivative of $\arctan$ is bounded  away from $0$, our set cardinality is bounded by
\begin{equation}
\label{eq:0<y<x<N,y/x-t<eps}
\begin{split}
\#\{ x+iy \in \mathbb Z[i]:\ |x+iy|\leq N, \ &(x,y)=1\ \& \ | \text{arg}(x+iy)-\theta | <\epsilon \}
\\&\ll \# \{ 0<y<x<N:\ |y/x -\theta | <\epsilon \} .
\end{split}
\end{equation}

If there is no solution to \eqref{eq:0<y<x<N,y/x-t<eps} we are done, otherwise we choose one,
i.e. a tuple $(a,b)\in \Z^{2}$ with $0<b<a<N$ and $(a,b)=1$ such that $|b/a-\theta|< \epsilon$. Any other solution
$(x,y)$ of \eqref{eq:0<y<x<N,y/x-t<eps} satisfies
\[
\frac 1{N^2} < \frac 1 {xb} \leq \left|  \frac yx - \frac ab \right| \leq \left|  \frac yx - \theta \right| + \left| \theta - \frac ab \right| <2\epsilon ,
\]
hence there is at most one solution unless $\epsilon>1/2N^2$. The above implies that
every other solution $(x,y)$ of \eqref{eq:0<y<x<N,y/x-t<eps} necessarily satisfies
\begin{equation}
\label{eq:|ax-by|<2eps*bx}
|ax-by|<2\epsilon bx.
\end{equation}

Now, since $(a,b)=1$, there exists an integer solution $(u,v)=(u_{0},v_{0})$ to
$au-bv=1$, and hence for $d\in \Z$ the tuple $(x,y)=(du_{0},dv_{0})$ is one solution to
\begin{equation}
\label{eq:au-bv=d}
ax-by=d.
\end{equation}
Hence the general solution to \eqref{eq:au-bv=d} is given by $x=du_{0}+kb$, $y=dv_{0}+ka$ with $k$ integer;
a solution of \eqref{eq:|ax-by|<2eps*bx} is a solution to \eqref{eq:au-bv=d} with $|d|<2\epsilon bx\ll \epsilon b N$,
and, given a value of $d$, the condition $0<y=dv_{0}+ka <N$ forces $k$ to lie in an interval of length $O(N/a)$.
Therefore the total number of solutions to \eqref{eq:|ax-by|<2eps*bx} is
\begin{equation*}
\ll  \epsilon bN \cdot  \frac{N}{a}\ll \epsilon N^2,
\end{equation*}
which  implies the statement of Lemma \ref{Simple count}.
\end{proof}

\section{Best possible?}

 \subsection{Squarefree $n$: Best possible?}

It is believed that there are lots of primes of the form $a^2+1$. Say $p_1,...,p_k$ are primes,  close to $y^2$, with $p_j=a_j^2+1$
so each $a_j$ is very close to $y$.

Then $P_j = a_j+i$, and these are
each complex numbers with
$|\arg(a_j\pm i)| \ll 1/y$.   Therefore any product $(a_1\pm i)\cdots.(a_k\pm i)$
has argument $\ll k/y$.
Therefore     these points appear in four arcs of width $\ll k/y$
centered around $1,i,-1,-i$.
Here $n = p_1...p_k \approx y^{2k}$ and so the width of these arcs is $\ll  k / n^{1/2k}$.
This yields examples of $n$ with $n$ is squarefree and $r_2(n)\to \infty$ for which
 \[
 \#\{ \alpha, \beta\in \Ec_{n}:\ |\alpha-\beta| \leq n^{1/2-o(1)} \} \gg   |\Ec_{n}|^2  ,
 \]
where $o(1)$ is going to $0$ arbitrarily slowly.

\subsection{Very short arcs}  Fix integer $a$ composed of many prime factors $\equiv 1 \pmod 4$, and select $m$ arbitrarily large, for which
$b=m^2+1$ has $\leq 3$ prime factors.\footnote{We know such $b$ exist by sieve methods; though we believe there are infinitely many such primes, which is as yet unproved.}  Let $u=1$. If  $|\alpha|^2=a$ then $A^*(\alpha,m+i,1)\subset S(ab,2a^{1/2})$, which embeds into $D(ab,2/b^{1/2})\subset D(ab,2/m)$.
Now $|A^*(\alpha,m+i,1)|\gg r_2(a)\gg |D(ab)|$.  Therefore, for $n=ab$, we have given infinitely many examples with
\[
 \# \{ (u,v)\in \Ec_{n}: |u-v|\leq n^{o(1)}\} \gg |\Ec_{n}|.
\]


\end{document}